\title{  Classification of equilibria for the spatially  homogeneous Boltzmann equation  for  Fermi-Dirac particles
  }
\author{ Xuguang Lu
\footnote {Department of Mathematical Sciences, Tsinghua
University, Beijing 100084, People's Republic of China;  e-mail:
xglu@mail.tsinghua.edu.cn }}
\date{}      
\begin{document}             
\maketitle                   

\newcommand{\ap}{\alpha}
\newcommand{\ld}{\lambda}
\newcommand{\td}{\tilde}
\newcommand{\ep}{\epsilon}
\newcommand{\p}{\partial}
\newcommand{\vp}{\varphi}
\newcommand{\vep}{\varepsilon}
\newcommand{\og}{\omega}
\newcommand{\Og}{\Omega}
\newcommand{\sg}{\sigma}
\newcommand{\Sg}{\Sigma}
\newcommand{\gm}{\gamma}
\newcommand{\Gm}{\Gamma}
\newcommand{\dt}{\delta}
\newcommand{\Dt}{\Delta}
\newcommand{\fr}{\frac}
\newcommand{\wt}{\widetilde}
\newcommand{\wh}{\widehat}
\newcommand{\intt}{\int\!\!\!\!\int}
\newcommand{\inttt}{\int\!\!\!\!\int\!\!\!\!\int}
\newcommand{\intttt}{\int\!\!\!\!\int\!\!\!\!\int\!\!\!\!\int}
\newcommand{\bR}{{\mathbb R}^3 }
\newcommand{\bS}{{\mathbb S}^2 }
\newcommand{\bSS}{{\bS}\times{\bS}}
\newcommand{\bSSS}{{\bS}\times{\bS}\times{\bS}}
\newcommand{\bRR}{{\bR}\times{\bR}}
\newcommand{\bRRR}{{\bRR}\times{\bR}}
\newcommand{\bRS}{{\bR}\times {\mathbb S}^2 }
\newcommand{\bRRS}{{\bRR}\times{\mathbb S}^2 }
\newcommand{\bRRRS}{{\bRRR}\times{\mathbb S}^2 }
\newcommand{\bRd}{{\mathbb R}^d}
\newcommand{\bRN}{{\mathbb R}^N}
\newcommand{\bRSN}{{\bRN}\times {\mathbb S}^{N-1} }
\newcommand{\bSN}{{\mathbb S}^{N-1}}
\newcommand{\bRRSN}{{\bRN}\times {\bRN}\times{\mathbb S}^{N-1} }
\newcommand{\bRn}{{\mathbb R}^n}
\newcommand{\bRSn}{{\bRn}\times {\mathbb S}^{n-1} }
\newcommand{\bSn}{{\mathbb S}^{n-1}}
\newcommand{\bRRn}{{\bRn}\times {\bRn}}
\newcommand{\bRRSn}{{\bRn}\times {\bRn}\times{\mathbb S}^{n-1} }
\newcommand{\la}{\langle}
\newcommand{\ra}{\rangle}
\newcommand{\mR}{{\mathbb R}}
\newcommand{\mN}{{\mathbb N}}
\newcommand{\mQ}{{\mathbb Q}}
\newcommand{\mC}{{\mathbb C}}
\newcommand{\mZ}{{\mathbb Z}}
\newcommand{\mS}{{\mathbb S}}
\newcommand{\be}{\begin{myequation}}
\newcommand{\ee}{\end{myequation}}
\newcommand{\bes}{\begin{myeqnarray}}
\newcommand{\ees}{\end{myeqnarray}}
\newcommand{\beas}{\begin{eqnarray*}}
\newcommand{\eeas}{\end{eqnarray*}}
\newcommand{\lb}{\label}
\newcounter{thm}
\setcounter{thm}{0}
\theoremseparator{.}
\newtheorem{theorem}{Theorem}[section]
\newtheorem{proposition}[theorem]{Proposition}
\newtheorem{definition}[theorem]{Definition}
\newtheorem{lemma}[theorem]{Lemma}
\newtheorem{remark}[theorem]{Remark}
\newtheorem{question}[theorem]{Question}
\newtheorem{problem}[theorem]{Problem}
\newtheorem{assumption}[theorem]{Assumption}
\newtheorem{corollary}[theorem]{\indent Corollary}
\newcounter{myequation}[section]
\renewcommand{\theequation}{\arabic{section}.\arabic{myequation}}
\newenvironment{proof}{{\bf Proof.}}{$\hfill\Box$}
\newenvironment{myequation}{\stepcounter{myequation}\begin{equation}}{\end{equation}}
\newenvironment{myeqnarray}{\stepcounter{myequation}\begin{eqnarray}}{\end{eqnarray}}
\newcommand{\dnumber}{\stepcounter{myequation}}

\vskip 0.1in \baselineskip 18.2pt

\begin{abstract}
The classification of equilibria for the spatially  homogeneous Boltzmann equation for Fermi-Dirac particles is proved for any $n$-dimensional velocity space with $n\ge 2$. The same classification
has been proven in \cite{Lu2001} for $n=3$. Now the proof for $n\ge 2$ is based on
a recent result on a characterization of Euclidean balls for all dimensions $\ge 2$.
 \\

{\bf Key words}: Boltzmann equation,  Fermi-Dirac particles,
classification of equilibria,  Euclidean balls.

{\bf Mathematics Subject Classification:} 82C10, 82C40, 51M04, 52A10

\end{abstract}

\begin{center}\section{ Introduction } \end{center}

The spatially  homogeneous Boltzmann equation  for  Fermi-Dirac particles
under consideration is given by (after normalizing parameters)
$$
\fr{\p }{\p t}f(v,t) =\intt_{\bRSn}B(v-v_*,\sg)
\big(f'f_*'(1-f)(1-f_*)-ff_*(1-f')(1-f_*')
\big){\rm d}v_*{\rm d}\sg \eqno({\rm BFD})$$
with $(v, t)\in{\mathbb R}^n\times(0,\infty)$. Throughout this paper we assume that
$n\ge 2$.  
For $n=3$, such Boltzmann equations for Bose-Einstein particles and for Fermi-Dirac particles were first derived by Nordheim \cite{Nordheim} and Uehling $\&$ Uhlenbeck \cite{Uehling and Uhlenbeck}
 and then taken attention and developed by \cite{weak-coupling},\cite{CC},\cite{ESY},\cite{LS}.
The solution $f=f(v,t)\ge 0$ is the  number density of particles at time $t$ with the velocity $v$,
and as usual we denote briefly
$f_*=f(v_*,t), f'=f(v',t),f_*'=f(v_*',t)$  where $v,v_*$ and $v',v_*'$ are velocities of two particles just before and after
their collision:
\be v'=\fr{v+v_*}{2}+ \fr{|v-v_*|\sg}{2},\quad v_*'=\fr{v+v_*}{2}-\fr{|v-v_*|\sg}{2}, \qquad \sg\in{\bSn}
\lb{1.1}\ee
which conserves the momentum and kinetic energy
$$ v'+v_*'=v+v_*,\quad |v'|^2+|v_*'|^2=|v|^2+|v_*|^2.$$
The right hand side of Eq.(BFD) is called collision integral
which describes
the rate of change of $f$ due to the binary collision (\ref{1.1}). The function
$B(v-v_*,\sg)$ (the collision kernel) is a positive Borel function of
$(|v-v_*|, \la v-v_*,\sg\ra/|v-v_*|)$ only, and satisfies some conditions
so that Eq.(BFD) has solutions.

A solution $f$ of Eq.(BFD) is not only required having finite mass and energy, i.e. for every $t\in [0,\infty)$
 $$f(\cdot,t)\in L^1_2({\bRn})=\{ f\in L^1({\bRn})\,\,|\,\, \|f\|_{L^1_2}:=\int_{{\bRn}}(1+|v|^2)|f(v)|{\rm d}v<\infty\}$$
 but also satisfying the $L^{\infty}$-condition (due to the Pauli's exclusion principle):
\be 0\le f(v,t)\le 1\qquad \forall\, (v,t)\in {\bRn}\times [0,\infty).\lb{1.2}\ee
It is well-known that under some conditions on $B(v-v_*,\sg)$,  
if an initial datum  $f|_{t=0}=f_0\in L^1_2({\bRn})$ satisfies 
$0\le f_0\le 1$ on ${\bRn}$, then the corresponding 
solution $f$ satisfies (\ref{1.2}).

The entropy functional for Eq.(BFD) is
$$ S(f)=\int_{{\bRn}}\big\{-(1-f(v))
\log(1-f(v))- f(v)\log(f(v))\big\}{\rm d}v$$
from which it is easily seen that
$$0\le S(f)<\infty\quad {\rm for\,\,\, all}\,\,\,f\in L^1({\bRn})\,\,\, {\rm satisfying}\,\,\,
0\le f\le 1\quad {\rm on}\quad {\bRn}.$$
The corresponding entropy identity for solutions of Eq.(BFD) is
\be S(f(t))=S(f_0)+
\int_{0}^{t}D(f(s)){\rm d}s, \ \ \ t\ge 0 \lb{1.4}\ee
where  $f(t)=f(\cdot, t)$ and
$$D(f)=\fr{1}{4}\inttt_{\bRRSn}B(v-v_*,\sg)
\Gm\big(f'f'_*(1-f)(1-f_*)\,,\,ff_*(1-f')
(1-f_*')\big)
{\rm d}v_*{\rm d}\sg{\rm d}v,$$
$$  \Gm(a,b)=\left\{ \begin{array}{lll}(a-b)\log(a/b) \,,
& \mbox{  $a>0,\ b>0;$} \\
\ \ \ \infty  , & \mbox{  $a>0,\ b=0$ \ \ or
\ \ $a=0,\ b>0;$} \\
\ \ \ \ 0 ,  & \ \mbox{  $a=b=0.$} \end{array} \right.  $$

An equilibrium $f\in L^1_2({\bRn})$ of Eq.(BFD) is a
time-independent solution of the equation.
By entropy identity (\ref{1.4}) and $B(\cdot,\cdot)>0$ a.e.,
this is equivalent to saying that an equilibrium
$f(v)$ of Eq.(BFD) is  a
solution  of the  following functional equation
\be f'f_*'(1-f)(1- f_*)=f\,f_*(1-f')
(1-f_*') \ \ \ {\rm a.e.\,\,\,\, on}\quad {\bRRSn}\lb{1.4}\ee
together with  the  physical  conditions 
\be f\in L^1_2({\bRn}),\,\,\,\, \|f\|_{L^1_2}\neq 0\,\,\,\, {\rm and}\,\,\,\, 
0\leq f\leq 1\,\,\,\,{\rm on}\,\,\,\,{\bRn}.\lb{1.4*}\ee

Let $mes(E)$ be the Lebesgue measure of a Lebesgue measurable set $E\subset {\bRn}$. 
Let $|{\mS}^{n-1}|$ denote the area of the unit sphere ${\mS}^{n-1}\subset {\bRn}$.
As usual we denote ${\bRn}(0<f<1)=\{v\in {\bRn}\,|\, 0<f(v)<1\}, 
{\bRn}(f=1)=\{v\in{\bRn}\,|\, f(v)=1\}$, etc. 
The main result of the paper is the following theorem which gives a classification 
of solutions of Eq.(\ref{1.4}) with (\ref{1.4*}).
\vskip2mm

\begin{theorem}\label{theorem1.1}
 Let $n\ge 2$. Let $f$ be a solution of Eq.(\ref{1.4}) with (\ref{1.4*}) and let
$$M_0=\int_{\bRn}f(v){\rm d}v,\ \ \ \ M_2=\int_{\bRn}f(v)|v-v_0|^2{\rm d}v, \ \ \ \
v_0=\fr{1}{M_0}\int_{\bRn}f(v)v{\rm d}v.$$
Then $f$ is classified into (I) and (II) that are characterized as follows:

(I) $f$ satisfies one of the following three conditions that are equivalent to each other:
\bes&& S(f)>0\quad {\rm i.e.}\quad mes({\bRn}(0<f<1))>0.\lb{1.9}\\
&&
\fr{M_2}{(M_0)^{(n+2)/n}}> \fr{n}{n+2}\bigg(\fr{n}{|{\bSn}|}\bigg)^{2/n}\,.\dnumber \lb{1.10}\\
&&
f(v)=\fr{ae^{-b|v-v_0|^2}}{1+ae^{-b|v-v_0|^2}}\qquad {\rm a.e.}\quad v\in{\bRn} \dnumber\lb{1.11}\ees
for some constants $0<a, b<\infty$.

(II) $f$ satisfies one of the following three conditions that are equivalent to each other:
\bes&& S(f)=0\quad {\rm i.e.}\quad mes({\bRn}(0<f<1))=0.\lb{1.12}\\
&&\fr{M_2}{(M_0)^{(n+2)/n}}= \fr{n}{n+2}\bigg(\fr{n}{|{\bSn}|}\bigg)^{2/n}\,.
\dnumber \lb{1.13}\\
&& f(v)={\bf 1}_{\{|v-v_0|\le R\}}\qquad {\rm a.e.}\quad v\in{\bRn}\dnumber \lb{1.14}\ees
for some constant $0<R<\infty$.
\end{theorem}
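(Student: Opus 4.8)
The plan is to use the equivalence recorded above — that an equilibrium is exactly a solution of the functional equation $(\ref{1.4})$ satisfying $(\ref{1.4*})$ — and then to argue according to whether the ``saturation set'' is Lebesgue‑negligible. Put $A=\bRn(0<f<1)$, $E_1=\bRn(f=1)$, $E_0=\bRn(f=0)$, and let $g=\log\big(f/(1-f)\big)$ on $A$. Reading off $(\ref{1.4})$ by inspecting which of its four factors can vanish yields, for a.e.\ collision $(v,v_*,\sg)$: if $v,v_*\in A$ then either $v',v_*'\in A$ with $g(v')+g(v_*')=g(v)+g(v_*)$, or one of $v',v_*'$ lies in $E_1$ and the other in $E_0$; if $v,v_*\in E_1$ then $v'\in E_1$ or $v_*'\in E_1$; if $v,v_*\in E_0$ then $v'\in E_0$ or $v_*'\in E_0$; and if $v,v_*\in\bRn(f>0)$ then $v'\in\bRn(f>0)$ or $v_*'\in\bRn(f>0)$. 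Hence each of $E_1$, $E_0$, $\bRn(f>0)$ satisfies the geometric condition ``for a.e.\ pair $x,y$ in the set and a.e.\ $\sg\in\bSn$, at least one of $\fr{x+y}{2}\pm\fr{|x-y|}{2}\sg$ belongs to the set'' — precisely the hypothesis of the recent characterization of Euclidean balls referred to above — and moreover $mes(E_1)\le\|f\|_{L^1_2}<\infty$.

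Case (II), $mes(A)=0$. Then $f={\bf 1}_E$ a.e.\ with $0<mes(E)=M_0<\infty$ and $\int_E|v|^2{\rm d}v<\infty$, and the same inspection shows that $(\ref{1.4})$ for ${\bf 1}_E$ is exactly the geometric condition above for $E$; so the ball characterization gives that $E$ is, up to a null set, a ball, whose centre must be the centroid $v_0$ and whose radius $R$ is fixed by $mes(E)=M_0$, which is $(\ref{1.14})$. Then $M_0=\fr{|\bSn|}{n}R^n$ and $M_2=\int_{|v-v_0|<R}|v-v_0|^2{\rm d}v=\fr{|\bSn|}{n+2}R^{n+2}$ give $(\ref{1.13})$, and $S({\bf 1}_E)=0$ gives $(\ref{1.12})$. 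For the reverse implications I would invoke the bathtub (rearrangement) principle: for $0\le h\le 1$ with $\int h=M_0$ one has $\int h|v-v_0|^2{\rm d}v\ge\int_{|v-v_0|<R}|v-v_0|^2{\rm d}v$, with equality iff $h={\bf 1}_{B_R(v_0)}$ a.e.; hence $M_2/M_0^{(n+2)/n}\ge\fr{n}{n+2}\big(\fr{n}{|\bSn|}\big)^{2/n}$ always, so $(\ref{1.13})$ forces $f={\bf 1}_{B_R(v_0)}$ a.e., while $mes(A)=0\Leftrightarrow S(f)=0$ is immediate from the integrand defining $S$.

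Case (I), $mes(A)>0$. The crux is to promote this to $mes(E_1)=mes(E_0)=0$. If $mes(E_1)>0$, the first paragraph and the ball characterization give $E_1=B_{R_1}(c_1)$ a.e.; feeding a pair $v\in E_1$, $v_*\in A$ into the first alternative forces $v'\in E_1$ or $v_*'\in E_1$ for a.e.\ $\sg$, but $|v'-c_1|^2+|v_*'-c_1|^2=|v-c_1|^2+|v_*-c_1|^2$ (conservation of energy about $c_1$), so once $|v_*-c_1|$ is large there is a positive‑measure set of $\sg$ with both $v',v_*'$ outside $B_{R_1}(c_1)$ — a contradiction; hence $A$, and therefore $\bRn(f>0)$, is essentially bounded, and then applying the ball characterization to $\bRn(f>0)$ together with the collision geometry near its boundary (as in \cite{Lu2001}) excludes $mes(E_1)>0$, and symmetrically $mes(E_0)>0$. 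With $0<f<1$ a.e.\ the surviving alternative says $g\in L^1_{loc}(\bRn)$ satisfies $g(v')+g(v_*')=g(v)+g(v_*)$ for a.e.\ $(v,v_*,\sg)$, so by the classical characterization of collision invariants $g(v)=\alpha+\beta\cdot v+\gamma|v|^2$ a.e.; integrability of $f=e^{g}/(1+e^{g})\in L^1_2(\bRn)$ forces $\gamma<0$, and completing the square — the vanishing point of the linear part being the centroid $v_0$ — gives $(\ref{1.11})$ with $0<a,b<\infty$. Finally $0<f<1$ everywhere is $(\ref{1.9})$; the strict form of the bathtub inequality (a logistic profile is not an indicator) is $(\ref{1.10})$; and $(\ref{1.10})$ implies $(\ref{1.9})$, since otherwise Case (II) would make the left‑hand side of $(\ref{1.10})$ equal the critical constant.

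I expect the genuine obstacle to be the ``no saturation'' step in Case (I): ruling out $mes(E_1)>0$ and $mes(E_0)>0$ when $mes(A)>0$. This is where the functional equation has to be played against the geometry of the collision manifold, and where \cite{Lu2001} relied on features special to $n=3$; the role of the new characterization of Euclidean balls is precisely to make this step — and Case (II) — go through uniformly for every $n\ge 2$.
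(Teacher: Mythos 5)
Your Case (II) and the moment/entropy equivalences are essentially sound and run close to the paper's own argument: your bathtub principle is the paper's Lemma \ref{lemma2.1}--Lemma \ref{lemma2.2}, and the reduction of (\ref{1.4}) for an indicator to the geometric condition, followed by the ball characterization (Theorem \ref{theorem4.2}), is exactly the paper's route to (\ref{1.14}). The genuine gap is the one you yourself flag: in Case (I) you never actually rule out $mes({\bRn}(f=1))>0$ or $mes({\bRn}(f=0))>0$, and the sketch you give does not close it. Applying Theorem \ref{theorem4.2} to $E_1$ and then to ${\bRn}(f>0)$ (once shown essentially bounded) only produces two nested balls, $E_1=B_{R_1}(c_1)$ and ${\bRn}(f>0)=B_{R_2}(c_2)$ up to null sets, with $A$ essentially the region between them; no contradiction follows from the dichotomy you extracted from (\ref{1.4}), because for $v,v_*\in A$ the alternative ``$v',v_*'\in A$ with $g(v')+g(v_*')=g(v)+g(v_*)$'' remains available, and for $v\in E_1$, $v_*\in A$ the alternative ``$v'\in E_1$ or $v_*'\in E_1$'' is perfectly consistent with the nested-ball picture unless $|v_*-c_1|$ is large. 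So the configuration you must exclude is not excluded by what is written, and invoking ``the collision geometry near the boundary as in \cite{Lu2001}'' is not a proof here: that is precisely the part of the $n=3$ analysis this paper cannot simply reuse for all $n\ge 2$, and reconstructing it is the heart of the matter. Note also that you may not freely apply the ball characterization to $E_0$ or to ${\bRn}(f>0)$: Theorem \ref{theorem4.2} requires $0<mes(E)<\infty$, which you only have for $E_1$ a priori.

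For comparison, the paper avoids this obstacle altogether and never splits off the saturation sets. From $mes({\bRn}(0<f<1))>0$ it uses Lemma \ref{lemma2.5} (the functionals ${\cal I}_f,{\cal J}_f$ are continuous and ${\bRn}({\cal I}_f>0)\cap{\bRn}({\cal J}_f>0)\neq\emptyset$) together with the a.e.\ identity $f\,[{\cal I}_f+{\cal J}_f]={\cal I}_f$, obtained by integrating (\ref{1.4}) against $b(|\la {\bf n},\sg\ra|)\,{\rm d}v_*{\rm d}\sg$, to define the representative $g={\cal I}_f/({\cal I}_f+{\cal J}_f)$; an expanding-ball iteration with factor $\ld=\fr{1}{2}(1+\sqrt{3/2})$ then shows ${\cal I}_g>0$ and ${\cal J}_g>0$ on all of ${\bRn}$, hence $0<g<1$ everywhere and $g$ is continuous, and only then is the Arkeryd collision-invariant theorem applied to $g/(1-g)$. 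This one argument performs your ``no saturation'' step and supplies the regularity needed for the collision-invariant theorem; by contrast your assertion that $g=\log\big(f/(1-f)\big)\in L^1_{loc}({\bRn})$ is also unjustified for a merely measurable $f$ with $0<f<1$ a.e. If you wish to keep your route, the missing ingredient is precisely a proof that the nested-ball configuration ($f=1$ on a ball of positive radius while $0<f<1$ on a set of positive measure) is incompatible with (\ref{1.4}); the cleanest repair is to replace that step by the propagation argument of Lemma \ref{lemma2.5} as in the paper.
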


For $n=3$, the classification given in Theorem \ref{theorem1.1} has been proven in \cite{Lu2001} where the inequality (\ref{1.10}) and the equality
(\ref{1.13}) are written in terms of the kinetic temperature $T>0$ and the critical kinetic temperature
$T_F>0$ (the Fermi-temperature) as follows:
$$T/T_F> 2/5\qquad {\rm and}\qquad T/T_F= 2/5.$$
Theorem \ref{theorem1.1} now shows that the same classification holds also true for all $n\ge 2$. In particular it holds true for $n=2$, which was once the most difficult case. The method of proof of Theorem \ref{theorem1.1} is similar to that of \cite{Lu2001} where
the key step is to deduce a characterization of Euclidean balls for $n\ge 3$.
Now since a recent work (see Appendix) proves that the same characterization of Euclidean balls
holds also true for all $n\ge 2$, it follows that Theorem \ref{theorem1.1} holds true. For completeness we give here a proof of Theorem \ref{theorem1.1}. To do this we first collect and prove some lemmas in Section 2, and then in Section 3 we prove Theorem \ref{theorem1.1}. The characterization of Euclidean balls mentioned above
and its proof are put in the Appendix.

\begin{center}\section{ Some Lemmas} \end{center}

\begin{lemma}\label{lemma2.1}(\cite{Lu2001})  Given constants $0<p<q<\infty$. Let
$\phi$ be measurable on $[0, \infty)$
with $0\le \phi\le 1$ on $[0,\infty)$ and $0<\int_{0}^{\infty}r^{q-1}\phi(r){\rm d}r<\infty.$
Then
$$\bigg(p\int_{0}^{\infty}r^{p-1}\phi(r){\rm d}r\bigg)^{1/p}
\le\bigg(q\int_{0}^{\infty}r^{q-1}\phi(r){\rm d}r\bigg)^{1/q}$$
and the equality holds if and only if 
there is a  constant $0<R<\infty$ such  that $
\phi={\bf 1}_{[0, R]}$  a.e. on $[0,\infty).$
\end{lemma}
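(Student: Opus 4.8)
The plan is to prove the sharp inequality by a bathtub (layer–cake) comparison, reducing everything to the extremal profile ${\bf 1}_{[0,R]}$. Since $\int_0^\infty r^{q-1}\phi(r)\,dr$ is positive and finite, I would first fix the number $R\in(0,\infty)$ by $R^q=q\int_0^\infty r^{q-1}\phi(r)\,dr$, i.e. $R=\big(q\int_0^\infty r^{q-1}\phi(r)\,dr\big)^{1/q}$, which is exactly the value appearing on the right-hand side after taking $q$-th roots. The assertion then reduces to showing
$p\int_0^\infty r^{p-1}\phi(r)\,dr\le R^p$, with equality precisely when $\phi={\bf 1}_{[0,R]}$ a.e. For $\phi={\bf 1}_{[0,R]}$ one has $p\int_0^R r^{p-1}\,dr=R^p$ and $q\int_0^R r^{q-1}\,dr=R^q$, so that case is a trivial identity; the real content is the inequality for general admissible $\phi$.

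The key step exploits the sign $p-q<0$. Using $R^p=p\int_0^R r^{p-1}\,dr$ together with the balance identity $\int_0^R r^{q-1}(1-\phi(r))\,dr=\int_R^\infty r^{q-1}\phi(r)\,dr$ (which is just a rewriting of the definition of $R$), I would split the integral at $R$, factor $r^{p-1}=r^{p-q}r^{q-1}$, and use that $r\mapsto r^{p-q}$ is strictly decreasing, so $r^{p-q}\ge R^{p-q}$ on $(0,R)$ and $r^{p-q}\le R^{p-q}$ on $(R,\infty)$:
$$\begin{aligned}
R^p-p\int_0^\infty r^{p-1}\phi\,dr &= p\int_0^R r^{p-1}(1-\phi)\,dr-p\int_R^\infty r^{p-1}\phi\,dr\\
&\ge pR^{p-q}\Big(\int_0^R r^{q-1}(1-\phi)\,dr-\int_R^\infty r^{q-1}\phi\,dr\Big)=0.
\end{aligned}$$
Taking $p$-th roots and inserting the value of $R$ yields the inequality.

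For the equality case I would simply track when the single inequality above becomes an equality. Since $p-q<0$ strictly, $r^{p-q}>R^{p-q}$ for every $r\in(0,R)$, so equality in the first integral forces $1-\phi=0$ a.e. on $(0,R)$; likewise $r^{p-q}<R^{p-q}$ for $r>R$ forces $\phi=0$ a.e. on $(R,\infty)$, i.e. $\phi={\bf 1}_{[0,R]}$ a.e., and the converse is the identity already noted. I do not anticipate a genuine obstacle: the only points requiring care are the finiteness bookkeeping that makes the additive splittings legitimate — each integral above is dominated either by some $\int_0^R r^{s-1}\,dr<\infty$ or, for the tails, by $R^{p-q}\int_R^\infty r^{q-1}\phi\,dr<\infty$, where $p<q$ is used once more — and ensuring $0<R<\infty$, which is precisely the purpose of the hypothesis $0<\int_0^\infty r^{q-1}\phi(r)\,dr<\infty$. (One could alternatively change variables $t=r^{p}$ and invoke a Hardy-type inequality, but the direct comparison above is shortest and delivers the equality characterization for free.)
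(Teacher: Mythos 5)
Your argument is correct and complete. Note that the paper itself does not prove Lemma \ref{lemma2.1} at all --- it only cites \cite{Lu2001} --- so your proposal actually supplies a self-contained proof where the paper relies on a reference. The key points all check out: with $R$ fixed by $R^q=q\int_0^\infty r^{q-1}\phi\,{\rm d}r$ the balance identity $\int_0^R r^{q-1}(1-\phi)\,{\rm d}r=\int_R^\infty r^{q-1}\phi\,{\rm d}r$ is immediate, the finiteness of $\int_0^\infty r^{p-1}\phi\,{\rm d}r$ (needed to split the integrals) follows exactly as you say from $0\le\phi\le1$ near the origin and from $r^{p-q}\le R^{p-q}$ on $[R,\infty)$ in the tail, and writing
$$R^p-p\int_0^\infty r^{p-1}\phi\,{\rm d}r=p\int_0^R\big(r^{p-q}-R^{p-q}\big)r^{q-1}(1-\phi)\,{\rm d}r+p\int_R^\infty\big(R^{p-q}-r^{p-q}\big)r^{q-1}\phi\,{\rm d}r\ge 0$$
exhibits the difference as a sum of two nonnegative terms, so the strict monotonicity of $r\mapsto r^{p-q}$ forces $\phi=1$ a.e.\ on $(0,R)$ and $\phi=0$ a.e.\ on $(R,\infty)$ in the equality case, with the converse a one-line computation. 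This bathtub-type comparison with the indicator of equal $q$-moment is the standard route to such statements and delivers both the inequality and its equality characterization in one stroke.
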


\begin{lemma}\label{lemma2.2} Let  $f\in L^1_2({\bRn})$  satisfy $\|f\|_{L^1_2}\neq 0$ and
$ 0\le f\le 1 $ on ${\bRn}$, and let
$$M_0=\int_{\bRn}f(v){\rm d}v,\ \ \ \ M_2=\int_{\bRn}f(v)|v-v_0|^2{\rm d}v, \ \ \ \
v_0=\fr{1}{M_0}\int_{\bRn}f(v)v{\rm d}v.$$
Then
\be \fr{M_2}{(M_0)^{(n+2)/n}}\ge \fr{n}{n+2}\bigg(\fr{n}{|{\bSn}|}\bigg)^{2/n}
\lb{2.1}\ee
and the  equality  holds  if  and  only  if  $f(v)={\bf 1}_{\{|v-v_0|\le R\}}$ for some
constant $0<R<\infty$. 
\end{lemma}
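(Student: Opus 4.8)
The plan is to reduce the $n$-dimensional inequality to the one-dimensional Lemma \ref{lemma2.1} by spherical averaging around $v_0$. First note that $f\ge 0$ and $\|f\|_{L^1_2}\neq 0$ give $0<M_0<\infty$, and $f\in L^1_2({\bRn})$ gives $M_2<\infty$ (while $M_2=0$ would force $f$ to be supported on the single point $v_0$, hence $M_0=0$, a contradiction, so $M_2>0$). Then I would set, for $r\ge 0$,
$$\phi(r)=\fr{1}{|{\bSn}|}\int_{\bSn}f(v_0+r\omega)\,{\rm d}\omega .$$
Passing to polar coordinates $v=v_0+r\omega$ and using Fubini's theorem, $\phi$ is measurable on $[0,\infty)$, satisfies $0\le\phi\le 1$ (because $0\le f\le 1$), and
$$M_0=|{\bSn}|\int_0^\infty r^{n-1}\phi(r)\,{\rm d}r,\qquad M_2=|{\bSn}|\int_0^\infty r^{n+1}\phi(r)\,{\rm d}r .$$
In particular $0<\int_0^\infty r^{n+1}\phi(r)\,{\rm d}r<\infty$, so Lemma \ref{lemma2.1} applies with $p=n$ and $q=n+2$.

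Applying Lemma \ref{lemma2.1} gives
$$\bigg(\fr{nM_0}{|{\bSn}|}\bigg)^{1/n}\le\bigg(\fr{(n+2)M_2}{|{\bSn}|}\bigg)^{1/(n+2)} .$$
Raising both sides to the power $n(n+2)$ and carrying out the elementary algebra produces exactly inequality (\ref{2.1}). Moreover, by the equality clause of Lemma \ref{lemma2.1}, equality holds in (\ref{2.1}) if and only if $\phi={\bf 1}_{[0,R]}$ a.e. on $[0,\infty)$ for some constant $0<R<\infty$.

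The remaining point, and the only place where genuine care is needed, is to transfer the condition $\phi={\bf 1}_{[0,R]}$ back to $f$, and here the bound $f\le 1$ is used crucially. For a.e. $r\in(0,R)$ one has $\fr{1}{|{\bSn}|}\int_{\bSn}\big(1-f(v_0+r\omega)\big)\,{\rm d}\omega=1-\phi(r)=0$ with integrand $\ge 0$, hence $f(v_0+r\omega)=1$ for a.e. $\omega\in{\bSn}$; and for a.e. $r>R$, $\int_{\bSn}f(v_0+r\omega)\,{\rm d}\omega=0$ with integrand $\ge 0$, hence $f(v_0+r\omega)=0$ for a.e. $\omega\in{\bSn}$. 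Fubini in polar coordinates then yields $f=1$ a.e. on $\{|v-v_0|\le R\}$ and $f=0$ a.e. on $\{|v-v_0|>R\}$, i.e. $f={\bf 1}_{\{|v-v_0|\le R\}}$ a.e. Conversely, if $f={\bf 1}_{\{|v-v_0|\le R\}}$ then $\phi={\bf 1}_{[0,R]}$, so the equality case of Lemma \ref{lemma2.1} applies and (\ref{2.1}) becomes an equality; alternatively one simply computes $M_0=|{\bSn}|R^n/n$ and $M_2=|{\bSn}|R^{n+2}/(n+2)$ and verifies the ratio directly.

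I do not expect a serious obstacle: once the problem is radially averaged the inequality is immediate from Lemma \ref{lemma2.1}, and the only subtlety is the equality analysis, where $0\le f\le 1$ is precisely what forces the averaged indicator $\phi={\bf 1}_{[0,R]}$ to lift back to $f={\bf 1}_{\{|v-v_0|\le R\}}$ rather than merely to some function having that radial average. An equivalent alternative is to invoke the bathtub principle directly, observing that among nonnegative functions bounded by $1$ with $\int f=M_0$ the minimizer of $\int f(v)|v-v_0|^2{\rm d}v$ is ${\bf 1}_{B_R(v_0)}$ with $|B_R(v_0)|=M_0$; but since Lemma \ref{lemma2.1} is already available, the spherical-averaging route is the most economical.
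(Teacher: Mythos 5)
Your proof is correct and follows essentially the same route as the paper: define the spherical average $\phi$ (the paper's $\bar f$), apply Lemma \ref{lemma2.1} with $p=n$, $q=n+2$, and use $0\le f\le 1$ to lift the equality case $\phi={\bf 1}_{[0,R]}$ back to $f={\bf 1}_{\{|v-v_0|\le R\}}$. You merely spell out in more detail the positivity of $M_2$ and the lifting step, which the paper dismisses as "easily seen."
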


\begin{proof} Let
$${\bar f}(r)=\fr{1}{|{\bSn}|}\int_{\bSn}f(v_0+r\sg){\rm d}\sg.$$
Then $0\le \bar{f}(r)\le 1$ for all $r\in [0,\infty)$ and
(\ref{2.1}) is equivalent to the inequality
$$\bigg((n+2)\int_{0}^{\infty}r^{n+1}{\bar f}(r){\rm d}r\bigg)^{1/(n+2)}
\ge \bigg(n\int_{0}^{\infty}r^{n-1}{\bar f}(r){\rm d}r\bigg)^{1/n} $$
which does hold by Lemma \ref{lemma2.1}. Also,
since $0\le f\le 1$ on ${\bRn}$, it is easily seen that the two equalities
${\bar f}(r)={\bf 1}_{[0, R]}(r)$  a.e. $ r\in[0,\infty)$
and
$f(v)={\bf 1}_{\{|v-v_0|\le R\}}$  a.e. $v\in{\bRn}$  are
equivalent. This proves the lemma.
\end{proof}
\vskip2mm

\begin{lemma}\label{lemma2.3}  Let $M_0>0, M_2>0,$  and  $v_0\in{\bRn}.$
Then  there  exists  a unique  Fermi-Dirac distribution
${F}_{a,b}(v)=\fr{a e^{-b|v-v_0|^2}}{1+a e^{-b|v-v_0|^2}}$
with the coefficients
$a>0, b>0$ such  that
\be\int_{\bRn}F_{a,b}(v){\rm d}v=M_0,\quad
\int_{\bRn}F_{a,b}(v)|v-v_0|^2{\rm d}v=M_2 \lb{2.2}\ee
if  and  only  if
$M_0,  M_2$  satisfy the strict inequality 
\be \fr{M_2}{(M_0)^{(n+2)/n}}>\fr{n}{n+2}\bigg(\fr{n}{|{\bSn}|}\bigg)^{2/n}.\lb{2.3}\ee
\end{lemma}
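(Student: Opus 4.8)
The plan is to reduce the two-parameter problem in $(a,b)$ to a one-parameter monotonicity statement. After translating so that $v_0=0$, substitute $v=u/\sqrt b$ to remove the $b$-dependence from the integrals: with $g_a(u)=\fr{ae^{-|u|^2}}{1+ae^{-|u|^2}}$ and $I_k(a)=\int_{\bRn}g_a(u)|u|^k\,{\rm d}u$ (which are finite and strictly positive for every $a>0$, the integrands being dominated by $ae^{-|u|^2}|u|^k$), one gets $\int_{\bRn}F_{a,b}\,{\rm d}v=b^{-n/2}I_0(a)$ and $\int_{\bRn}F_{a,b}|v-v_0|^2\,{\rm d}v=b^{-(n+2)/2}I_2(a)$, whence the scale-invariant ratio
$$\fr{M_2}{(M_0)^{(n+2)/n}}=\fr{I_2(a)}{I_0(a)^{(n+2)/n}}=:\Phi(a)$$
depends on $a$ alone. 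Thus, for given $M_0,M_2>0$, a pair $(a,b)$ solving (\ref{2.2}) exists and is unique if and only if $\Phi(a)=M_2/M_0^{(n+2)/n}$ has a (necessarily unique) root $a$, because then $b$ is forced to equal $(I_0(a)/M_0)^{2/n}$ and the second equation in (\ref{2.2}) holds automatically, while $(a,b)\mapsto F_{a,b}$ is injective. So it suffices to prove that $\Phi$ is a strictly decreasing bijection of $(0,\infty)$ onto $\big(\fr{n}{n+2}(\fr{n}{|\bSn|})^{2/n},\infty\big)$.

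To analyze $\Phi$, pass to polar coordinates, apply the layer-cake formula to $g_a$ (a function of $|u|$ only, strictly decreasing in $|u|$), and then make the substitutions $\mu=\log\fr{1-\ld}{\ld}$ (on the level variable $\ld$) followed by $\nu=\log a+\mu$. A direct computation gives $I_0(a)=\fr{|\bSn|}{n}\,G_n(\log a)$ and $I_2(a)=\fr{|\bSn|}{n+2}\,G_{n+2}(\log a)$, where
$$G_m(\beta)=\int_0^\infty \nu^{m/2}\,\fr{e^{\nu-\beta}}{(1+e^{\nu-\beta})^2}\,{\rm d}\nu\qquad(m\ge 0),$$
so that $\Phi(a)=\fr{n}{n+2}\big(\fr{n}{|\bSn|}\big)^{2/n}\,\fr{G_{n+2}(\beta)}{G_n(\beta)^{(n+2)/n}}$ with $\beta=\log a$. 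It remains to show that $\beta\mapsto G_{n+2}(\beta)/G_n(\beta)^{(n+2)/n}$ is strictly decreasing on $\mR$, with limit $+\infty$ as $\beta\to-\infty$ and limit $1$ as $\beta\to+\infty$.

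Differentiating under the integral sign and integrating by parts in $\nu$ (the boundary terms vanish since $m/2>0$ for $m=n,n+2$ and the logistic kernel decays exponentially) yields the recursion $G_m'(\beta)=\fr m2\,G_{m-2}(\beta)$; this is where $n\ge 2$ enters, guaranteeing that $G_{n-2}$ is a convergent integral. Hence
$$\fr{{\rm d}}{{\rm d}\beta}\log\fr{G_{n+2}(\beta)}{G_n(\beta)^{(n+2)/n}}=\fr{n+2}{2}\Big(\fr{G_n(\beta)}{G_{n+2}(\beta)}-\fr{G_{n-2}(\beta)}{G_n(\beta)}\Big)<0,$$
the sign being forced by strict log-convexity of $m\mapsto G_m(\beta)$: writing $\nu^{n/2}=(\nu^{(n-2)/2})^{1/2}(\nu^{(n+2)/2})^{1/2}$ and applying Cauchy--Schwarz to the positive kernel gives $G_n(\beta)^2<G_{n-2}(\beta)\,G_{n+2}(\beta)$, strict because $n-2\neq n+2$. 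The two limits follow by dominated convergence: as $\beta\to+\infty$ the kernel concentrates near $\nu=\beta$, so $G_m(\beta)\sim\beta^{m/2}$ and the ratio tends to $1$; as $\beta\to-\infty$ the kernel behaves like $e^{\beta-\nu}$ on $(0,\infty)$, so $G_m(\beta)\sim e^{\beta}\,\Gm(\tfrac m2+1)$ and the ratio is asymptotic to a constant times $e^{-2\beta/n}\to\infty$. Together with continuity and strict monotonicity this identifies the range of $\Phi$ with the asserted half-line, and the lemma follows from the reduction in the first paragraph.

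I expect the main obstacle to be precisely the strict monotonicity of $\Phi$, i.e.\ uniqueness of $(a,b)$: existence alone would follow from continuity together with the two limiting values, but injectivity requires the structural input above. The computationally cheap route is the one indicated --- reduce $I_0,I_2$ to the single family $G_m$, after which monotonicity is a one-line log-convexity argument, and the fact that makes $n\ge 2$ the natural threshold is exactly that $G_{n-2}$ must be finite. A more hands-on alternative (differentiate $\Phi$ in $a$ directly, reducing to an inequality among the moments $\int g_a$, $\int g_a^2$, $\int g_a|u|^2$, $\int g_a^2|u|^2$) is also available but considerably more cumbersome.
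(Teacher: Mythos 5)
Your proposal is correct and follows essentially the same route as the paper: after scaling out $b$, both proofs reduce the problem to strict monotonicity of the scale-invariant ratio in the single parameter $a$ (your $\Phi(a)$ is exactly $|{\bSn}|^{-2/n}P(1/a)$ in the paper's notation), prove it by Cauchy--Schwarz applied to the logistic-kernel moments (your $G_m(\beta)$ are the paper's $J_s(t)$ up to the substitution $\nu=r^2$, $t=e^{-\beta}$, and your inequality $G_n^2<G_{n-2}G_{n+2}$ is literally the paper's $J_{n+1}^2<J_{n-1}J_{n+3}$), and then identify the range from the two limits $a\to\infty$ and $a\to0^+$ exactly as the paper does. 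The only nitpick is your aside that $n\ge 2$ is what makes $G_{n-2}$ converge --- it in fact converges for all $n>0$ --- but this does not affect the argument.
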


\begin{proof} Let
$$I_{s}(t)=\int_{0}^{\infty}\fr{r^{s}}{1+t e^{r^2}}
{\rm d}r,\quad t>0,\quad s\ge 0.$$
For any $k\ge 0$ we compute
$$\int_{\bRn}F_{a,b}(v)|v-v_0|^k{\rm d}v
= |{\bSn}|\big(\fr{1}{\sqrt{b}}\big)^{n+k}I_{n-1+k}(1/a).$$
Then (\ref{2.2}) is rewritten
\be |{\bSn}|\big(\fr{1}{\sqrt{b}}\big)^nI_{n-1}(1/a)=M_0,\quad
|{\bSn}|\big(\fr{1}{\sqrt{b}}\big)^{n+2}I_{n+1}(1/a)=M_2.\lb{2.4}\ee
Let
$$P(t)=\fr{I_{n+1}(t)}{\big(I_{n-1}(t)\big)^{\fr{n+2}{n}}},\quad t>0.$$
Then (\ref{2.4}) is equivalent to
\be \fr{1}{|{\bSn}|^{2/n}}P(1/a)=\fr{M_2}{(M_0)^{(n+2)/n}},\quad
b=\Big(\fr{I_{n-1}(1/a)|{\bSn}|}{M_0}\Big)^{2/n}.\lb{2.5}\ee
We now prove that
\be\fr{{\rm d}}{{\rm d}t}P(t)>0\quad \forall\, t>0;\quad 
\lim\limits_{t\to 0+}P(t)=\fr{n^{(n+2)/n}}{n+2},\quad 
\lim\limits_{t\rightarrow \infty}P(t)=\infty.\lb{2.6}\ee
Compute
\beas&& \fr{{\rm d}}{{\rm d}t}P(t)
=\big(\fr{{\rm d}}{{\rm d}t}I_{n+1}(t)\big)\big(I_{n-1}(t)\big)^{-\fr{n+2}{n}}
+I_{n+1}(t)(-\fr{n+2}{n})\big(I_{n-1}(t)\big)^{-\fr{n+2}{n}-1}
\fr{{\rm d}}{{\rm d}t}I_{n-1}(t),\\
&&
-\fr{{\rm d}}{{\rm d}t}I_s(t)=J_s(t):=\int_{0}^{\infty}\fr{r^{s} e^{r^2}}{(1+t e^{r^2})^2}
{\rm d}r,\quad t>0,\quad s\ge 0.\eeas
And integration by parts gives
\beas I_s(t)
=\fr{2t}{s+1}J_{s+2}(t),\quad t>0, \quad s\ge 0. \eeas
So
\beas\fr{{\rm d}}{{\rm d}t}P(t)
=\big(\fr{2t}{n}\big)^{-\fr{n+2}{n}}\big(J_{n+1}(t)\big)^{-\fr{n+2}{n}-1}
\big( J_{n-1}(t)J_{n+3}(t)-[J_{n+1}(t)]^2\big),\quad t>0.\eeas
Since by Cauchy-Schwarz inequality
$$[J_{n+1}(t)]^2
<J_{n-1}(t)J_{n+3}(t)\quad \forall\, t>0$$
it follows that $\fr{{\rm d}}{{\rm d}t}P(t)>0$ for all $t>0$.

In order to prove the first limit in (\ref{2.6}), we let
$t= e^{-\rho}, \rho>0$, and compute
\beas&& I_{s}(e^{-\rho})=\fr{\rho^{\fr{s+1}{2}}}{2}\int_{0}^{\infty}\fr{u^{\fr{s-1}{2}}}{1+ e^{\rho(u-1)}}
{\rm d}u,\quad s\ge 0,\\
&&P(e^{-\rho})=\fr{I_{n+1}(e^{-\rho})}{\big(I_{n-1}(e^{-\rho})\big)^{\fr{n+2}{n}}}
=2^{\fr{2}{n}}\fr{K_{n}(\rho)}{\big(K_{n-2}(\rho)\big)^{\fr{n+2}{n}}}
\eeas
where
$$K_s(\rho)=\int_{0}^{\infty}\fr{u^{\fr{s}{2}}}{1+ e^{\rho(u-1)}}{\rm d}u,\quad s\ge 0.$$
Using Lebesgue dominated convergence we have
\beas
K_s(\rho)=\int_{[0,1)}\fr{u^{\fr{s}{2}}}{1+ e^{\rho(u-1)}}{\rm d}u
+\int_{(1,\infty)}\fr{u^{\fr{s}{2}}}{1+ e^{\rho(u-1)}}{\rm d}u
\to \int_{[0,1)}u^{\fr{s}{2}} {\rm d}u=\fr{2}{s+2}\quad (\rho\to \infty).\eeas
So
\beas\lim\limits_{t\to 0+}P(t)=\lim_{\rho\to\infty}P(e^{-\rho})
=2^{\fr{2}{n}}\fr{\fr{2}{n+2}}{\big(\fr{2}{n}\big)^{\fr{n+2}{n}}}
=\fr{n^{\fr{n+2}{n}}}{n+2}
\eeas
i.e.
$$\fr{1}{|{\bSn}|^{2/n}}P(0+)=\lim_{\rho\to\infty}\fr{1}{|{\bSn}|^{2/n}}P(e^{-\rho})=\fr{n}{n+2}\bigg(\fr{n}{|{\bSn}|}\bigg)^{2/n}.$$
And for all $t>1$ we have
\beas P(t)
\ge t^{\fr{2}{n}}\fr{\int_{0}^{\infty}\fr{r^{n-1}}{1+e^{r^2}}
{\rm d}r}{\big(\int_{0}^{\infty}\fr{r^{n-1}}{e^{r^2}}
{\rm d}r\big)^{\fr{n+2}{n}}}\to \infty\quad (t\to\infty).
\eeas
So
$P(\infty)=\infty$.

Now suppose that (\ref{2.2}) or equivalently (\ref{2.5}) has a solution $a>0,b>0$. Then
$$\fr{M_2}{(M_0)^{(n+2)/n}}=\fr{1}{|{\bSn}|^{2/n}}P(1/a)
> \fr{1}{|{\bSn}|^{2/n}}P(0+)=\fr{n}{n+2}\bigg(\fr{n}{|{\bSn}|}\bigg)^{2/n}.$$
Conversely if $M_0,M_2$ satisfy the inequality (\ref{2.3}), then
$$\fr{1}{|{\bSn}|^{2/n}}P(0+)<\fr{M_2}{(M_0)^{(n+2)/n}}<\infty=
\fr{1}{|{\bSn}|^{2/n}}P(\infty)$$
and so there is a unique $a>0$ such that
$$\fr{1}{|{\bSn}|^{2/n}}P(1/a)=\fr{M_2}{(M_0)^{(n+2)/n}}.$$
With this unique $a>0$, the equation (\ref{2.5}), or equivalently, the equation
(\ref{2.2}), has a unique solution $a>0, b>0$.
\end{proof}
\vskip2mm

{\bf Remark.} If $f$ in Lemma \ref{lemma2.2}
is an indicator function $f(v)={\bf 1}_{E}(v)$
of a measurable set $E\subset {\bRn}$ which is essentially not a ball, i.e.
$mes\big((E\setminus B)\cup(B\setminus E)\big)>0$ for every ball $B\subset {\bRn}$, then according
to Lemma \ref{lemma2.2} the strict inequality (\ref{2.3}) holds for this $f={\bf 1}_{E}$.
This shows that  Lemma \ref{lemma2.2} and Lemma \ref{lemma2.3} are far from enough
to prove the classification of solutions of Eq.(\ref{1.4}) with (\ref{1.4*}).

\begin{lemma}\label{lemma2.4}  Let $b(t), \Psi(r), f(v)$ be nonnegative Borel functions on $[0,1], [0,\infty)$ and
${\bRn}$ respectively, and let ${\bf n}=(v-v_*)/|v-v_*|$. Then
\beas&&\intt_{{\bRSn}}b(|\la {\bf n},\sg\ra|)\Psi(|v-v_*|)f(v'){\rm d}v_*{\rm d}\sg\\
&&=
\intt_{{\bRSn}}b(|\la {\bf n},\sg\ra|)\Psi(|v-v_*|)f(v_*'){\rm d}v_*{\rm d}\sg\\
&&=2^{n-1}|{\mS}^{n-2}|
\int_{0}^{\pi/2}\fr{\sin^{n-2}(\theta)}{\cos^{2}(\theta)}b(|\cos(2\theta)|)
\Big(\int_{{\bRn}}\Psi\big(\fr{|v-v_*|}{\cos(\theta)}\big)f(v_*){\rm d}v_*\Big){\rm d}\theta. \eeas
Here for the case $n=2$, we define $|{\mS}^0|=2$.
\end{lemma}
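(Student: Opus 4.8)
The plan is to prove the two equalities in turn; throughout, every integrand is a nonnegative Borel function, so Tonelli's theorem makes each interchange of integrals and each change of variables below legitimate as an identity in $[0,\infty]$. The first equality (replacing $f(v')$ by $f(v_*')$) is immediate: $\sg\mapsto-\sg$ is a measure-preserving involution of $\bSn$ which, by (\ref{1.1}), interchanges $v'$ and $v_*'$ while leaving $|v-v_*|$ and $|\la{\bf n},\sg\ra|$ unchanged.

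For the main equality, fix $v\in\bRn$ and call the left-hand side $I$. First I would set $w=v-v_*$ and pass to polar coordinates $w=\rho\og$ ($\rho>0,\ \og\in\bSn$), so that ${\rm d}v_*=\rho^{n-1}{\rm d}\rho\,{\rm d}\og$, $\Psi(|v-v_*|)=\Psi(\rho)$, ${\bf n}=\og$, and $v'-v=\fr{\rho}{2}(\sg-\og)$ by (\ref{1.1}). Then, for each fixed $\og$, I would write $\sg$ in polar coordinates about the axis $\og$, i.e. $\sg=\cos\chi\,\og+\sin\chi\,\tau$ with $\chi\in[0,\pi]$ and $\tau$ on the unit sphere ${\mS}^{n-2}(\og^\perp)$ of the hyperplane $\og^\perp$, so that ${\rm d}\sg=\sin^{n-2}\chi\,{\rm d}\chi\,{\rm d}\tau$ and $\la{\bf n},\sg\ra=\cos\chi$; when $n=2$ this degenerates to $\tau\in\{\pm\og^\perp\}$ with ${\rm d}\tau$ the counting measure and $|{\mS}^0|=2$. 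Now substitute $\chi=\pi-2\theta$, $\theta\in[0,\pi/2]$. Using $1+\cos(2\theta)=2\cos^2\theta$ and $\sin(2\theta)=2\sin\theta\cos\theta$ one finds $\sg-\og=2\cos\theta\,{\bf m}$, where
$$ {\bf m}={\bf m}(\og,\tau,\theta):=-\cos\theta\,\og+\sin\theta\,\tau\in\bSn,\qquad\text{hence}\qquad v'-v=\rho\cos\theta\,{\bf m}, $$
while $\sin^{n-2}\chi\,{\rm d}\chi$ becomes $2^{n-1}(\sin\theta\cos\theta)^{n-2}{\rm d}\theta$ (as $\chi$ runs over $[0,\pi]$, $\theta$ runs over $[0,\pi/2]$) and $|\cos\chi|=|\cos(2\theta)|$. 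Consequently $I$ equals $2^{n-1}$ times the integral of $\rho^{n-1}\Psi(\rho)(\sin\theta\cos\theta)^{n-2}b(|\cos(2\theta)|)f\big(v+\rho\cos\theta\,{\bf m}\big)$ over $(\rho,\theta,\og,\tau)\in(0,\infty)\times(0,\pi/2)\times\bSn\times{\mS}^{n-2}(\og^\perp)$.

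The heart of the matter is then the identity
$$ \int_{\bSn}\!{\rm d}\og\int_{{\mS}^{n-2}(\og^\perp)}g\big(-\cos\theta\,\og+\sin\theta\,\tau\big)\,{\rm d}\tau=|{\mS}^{n-2}|\int_{\bSn}g({\bf m})\,{\rm d}{\bf m}, $$
valid for every nonnegative Borel $g$ on $\bSn$ and every $\theta$. To see it, observe that $(\og,\tau)\mapsto{\bf m}(\og,\tau,\theta)$ is linear in $(\og,\tau)$, hence commutes with the $O(n)$-action, and that the measure ${\rm d}\og\,{\rm d}\tau$ on $\{(\og,\tau):\og\in\bSn,\ \tau\in{\mS}^{n-2}(\og^\perp)\}$ is $O(n)$-invariant; therefore its push-forward under this map is a rotation-invariant measure on $\bSn$, i.e. a constant multiple of ${\rm d}{\bf m}$, and the constant is $|{\mS}^{n-2}|$ because, taking $g\equiv1$, both sides become $|\bSn|\,|{\mS}^{n-2}|$. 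Applying this with $g({\bf m})=f(v+\rho\cos\theta\,{\bf m})$, then substituting $s=\rho\cos\theta$ in the $\rho$-integral, and finally recombining $\int_0^\infty s^{n-1}{\rm d}s\int_{\bSn}(\cdot)\,{\rm d}{\bf m}=\int_{\bRn}(\cdot)\,{\rm d}v_*$ with $v_*=v+s\,{\bf m}$ (so $|v-v_*|=s$) yields
$$ I=2^{n-1}|{\mS}^{n-2}|\int_0^{\pi/2}\fr{(\sin\theta\cos\theta)^{n-2}}{\cos^n\theta}\,b(|\cos(2\theta)|)\bigg(\int_{\bRn}\Psi\Big(\fr{|v-v_*|}{\cos\theta}\Big)f(v_*)\,{\rm d}v_*\bigg){\rm d}\theta, $$
and $(\sin\theta\cos\theta)^{n-2}/\cos^n\theta=\sin^{n-2}\theta/\cos^2\theta$ gives precisely the asserted formula.

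I expect the main obstacle to be organizational rather than conceptual: pinning down the substitution $\chi=\pi-2\theta$ together with its Jacobian factor $2^{n-1}(\sin\theta\cos\theta)^{n-2}$, and checking that the degenerate conventions at $n=2$ (namely $|{\mS}^0|=2$ and the integral over ${\mS}^0(\og^\perp)$ read as the two-term sum over $\tau\in\{\pm\og^\perp\}$) are consistent both with the polar-coordinate formula for ${\rm d}\sg$ and with the invariant-measure identity above. Some of the resulting one-dimensional integrals may diverge (for instance $\sin^{n-2}\theta/\cos^2\theta$ is not integrable near $\theta=\pi/2$ when $n=2$), but nonnegativity makes this harmless: the exceptional sets $\theta\in\{0,\pi/2\}$ and $v_*=v$ have measure zero, and Tonelli's theorem validates every step as an equality in $[0,\infty]$.
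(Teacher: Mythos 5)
Your proof is correct, but it takes a route that differs in substance from the paper's. The paper does not prove Lemma \ref{lemma2.4} from scratch: it disposes of the first equality by the same $\sg\mapsto-\sg$ symmetry you use, and then obtains the main identity as a special case of the general formula (\ref{5.3}) of Lemma \ref{lemma5.4} in the Appendix (choosing $F(\la{\bf n},\sg\ra,v_*',v_*)=b(|\la{\bf n},\sg\ra|)\Psi(|v-v_*|)f(v_*')$ and using $|{\bf n}-\tan(\theta)\tilde\sg|=1/\cos(\theta)$ for $\tilde\sg\bot{\bf n}$). The proof of (\ref{5.3}) itself is a purely coordinate-based chain: polar coordinates $v_*=v-r\og$, decomposition of $\og$ about the axis $\sg$, the double-angle substitution together with $r\to r/\cos(\theta)$, and then a reassembly in which the post-collisional point is renamed as the new $v_*$ and $\sg$ is decomposed about ${\bf n}$ — everything done with two applications of the explicit formula (\ref{Int1}). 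You instead decompose $\sg$ about $\og={\bf n}$ directly, perform the double-angle substitution $\chi=\pi-2\theta$, and then handle the crucial step by an $O(n)$-equivariance argument: the push-forward of ${\rm d}\og\,{\rm d}\tau$ on $\{(\og,\tau):\tau\in{\mS}^{n-2}(\og^{\perp})\}$ under $(\og,\tau)\mapsto-\cos(\theta)\og+\sin(\theta)\tau$ is rotation invariant, hence equals $|{\mS}^{n-2}|$ times the uniform measure on ${\bSn}$, after which a radial rescaling $s=\rho\cos(\theta)$ reconstitutes the $\int_{\bRn}\Psi(|v-v_*|/\cos(\theta))f(v_*){\rm d}v_*$ factor. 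Both arguments are legitimate (Tonelli keeps every step valid in $[0,\infty]$, as you note, and your $n=2$ conventions match the paper's $|{\mS}^0|=2$ and two-point ${\mS}^0(\og)$); what yours buys is a self-contained and arguably more conceptual proof of exactly the statement needed, while what the paper's route buys is the more general Lemma \ref{lemma5.4}, whose formula (\ref{5.3}) with other choices of $F$ is reused later (e.g. in Step 2 of the proof of Lemma \ref{lemma5.5}), so the reduction is not redundant in the paper's architecture.
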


\begin{proof} The first equality follows from the definition of $v', v_*'$.
The second equality is a special case of Lemma \ref{lemma5.4} in the Appendix by taking 
(for every fixed $v\in {\bRn}$)  $F(\la{{\bf n},\sg}\ra, v_*', v_*)=
b(|\la{{\bf n},\sg\ra}|)\Psi(|v-v_*|)f(v_*')$ and noting that
if $\tilde{\sg}\in {\mS}^{n-1}$ satisfies $\tilde {\sg}\bot {\bf n}$,
then $|v-v_*-|v-v_*|\tan(\theta)\tilde{\sg}|\\=|v-v_*||{\bf n}-\tan(\theta)\tilde{\sg}|
=|v-v_*|/\cos(\theta)$.
\end{proof}
\vskip2mm

{\bf Note.} Since the Lebesgue measure is regular, when dealing with 
measurability problems for functions in $L^1({\bRn})$,
we may automatically assume that they are all Borel functions on ${\bRn}$.
In other words, for notational
convenience we will do not distinguish between two measurable functions that differ by a null set.

\begin{lemma}\label{lemma2.5} Let $b(t)=1-t^2, t\in [0,1]; {\bf n}=(v-v_*)/|v-v_*|$.
For any function $f\in L^1({\bRn})$ satisfying $0\le f\le 1$ on ${\bRn}$, define
\begin{eqnarray*}
&& {\cal I}_f(v)=\intt_{\bRSn}b(|\la {\bf n},\sg\ra|)f(v')
f(v_*')(1-f(v_*)){\rm d}v_*{\rm d}\sg,\quad v\in{\bRn}\\
&& {\cal J}_f(v)=\intt_{\bRSn}b(|\la {\bf n},\sg\ra|)f(v_*)
(1-f(v'))(1-f(v_*')){\rm d}v_*{\rm d}\sg,\quad v\in{\bRn}.
\end{eqnarray*}
Then ${\cal I}_f,{\cal J}_f$ are well-defined and have the following properties:

(a). If $g\in L^1({\bRn})$ satisfying $0\le g\le 1$ on ${\bRn}$, then
$$|{\cal I}_f(v)-{\cal I}_g(v)|,\ \  |{\cal J}_f(v)-{\cal J}_g(v)|
\le C\|f-g\|_{L^1}\quad \forall\, v\in {\bRn}$$
where $C=(2^{n+2}+2)|{\mS}^{n-2}|\int_{0}^{\pi/2}\sin^n(\theta){\rm d}\theta$.
In particular, if $f=g$ a.e. on ${\bRn}$, then ${\cal I}_f\equiv {\cal I}_g$,
${\cal J}_f\equiv {\cal J}_g$
on ${\bRn}$. 

(b). ${\cal I}_f$, ${\cal J}_f$ are continuous on ${\bRn}$.

(c). If $mes({\bRn}(0<f<1))>0$,  then ${\bRn}({\cal I}_f>0)\cap{\bRn}({\cal J}_f>0)$ is non-empty.
\end{lemma}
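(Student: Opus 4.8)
The plan is to obtain well-definedness and part (a) from Lemma \ref{lemma2.4} by an elementary telescoping estimate, to derive part (b) from (a) via a translation-covariance identity, and to prove part (c) by a Lebesgue density argument. For (a), note that in each of the two integrands the factor beyond $b(|\la{\bf n},\sg\ra|)$ is a product of three quantities lying in $[0,1]$. Discarding all but one of them and applying Lemma \ref{lemma2.4} with $\Psi\equiv1$ — where $\fr{\sin^{n-2}(\theta)}{\cos^2(\theta)}b(|\cos(2\theta)|)=\fr{\sin^{n-2}(\theta)}{\cos^2(\theta)}\sin^2(2\theta)=4\sin^n(\theta)$ — shows $0\le{\cal I}_f(v)\le 2^{n+1}|{\mS}^{n-2}|\int_0^{\pi/2}\sin^n(\theta)\,{\rm d}\theta\,\|f\|_{L^1}$ and $0\le{\cal J}_f(v)\le\big(\int_{\bSn}b(|\la{\bf n},\sg\ra|)\,{\rm d}\sg\big)\|f\|_{L^1}$, so both are finite and well-defined. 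For the Lipschitz bound I would expand each difference of triple products as $a_1a_2a_3-b_1b_2b_3=(a_1-b_1)a_2a_3+b_1(a_2-b_2)a_3+b_1b_2(a_3-b_3)$, where the $a_i$ (resp.\ $b_i$) are the values of $f$ or $1-f$ (resp.\ $g$ or $1-g$) at $v',v_*',v_*$; since all factors lie in $[0,1]$ and $|(1-f)-(1-g)|=|f-g|$, this gives $|{\rm integrand}_f-{\rm integrand}_g|\le b(|\la{\bf n},\sg\ra|)\big(|f-g|(v')+|f-g|(v_*')+|f-g|(v_*)\big)$ pointwise. Applying Lemma \ref{lemma2.4} with $\Psi\equiv1$ to the $v'$- and $v_*'$-terms (its first equality covers the $v_*'$ one) and integrating out $\sg$ in the $v_*$-term, using $\int_{\bSn}b(|\la{\bf n},\sg\ra|)\,{\rm d}\sg=2|{\mS}^{n-2}|\int_0^{\pi/2}\sin^n(\theta)\,{\rm d}\theta$ (a one-line spherical computation, independent of ${\bf n}$), the three contributions add up to $(2^{n+1}+2^{n+1}+2)|{\mS}^{n-2}|\int_0^{\pi/2}\sin^n(\theta)\,{\rm d}\theta\,\|f-g\|_{L^1}=C\|f-g\|_{L^1}$; the last sentence of (a) is then immediate.

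For (b), the substitution $v_*=v-w$ (Jacobian $1$) rewrites ${\cal I}_f(v)$ and ${\cal J}_f(v)$ as integrals over $(w,\sg)\in\bRSn$ whose kernel (now $b(|\la{\bf n},\sg\ra|)$ with ${\bf n}=w/|w|$) does not involve $v$, while $v$ enters only through translated arguments of $f$; equivalently ${\cal I}_f(v+u)={\cal I}_{\tau_uf}(v)$ and ${\cal J}_f(v+u)={\cal J}_{\tau_uf}(v)$, where $(\tau_uf)(x)=f(x+u)$. Combining with (a) gives $|{\cal I}_f(v+u)-{\cal I}_f(v)|\le C\|\tau_uf-f\|_{L^1}$ and the same for ${\cal J}_f$; since $\|\tau_uf-f\|_{L^1}\to0$ as $u\to0$, both functions are (uniformly) continuous on $\bRn$.

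For (c), set $A=\bRn(0<f<1)$, so $mes(A)>0$. The key reduction is that it suffices to produce a single $v$ for which $\{(v_*,\sg):v_*\in A,\,v'\in A,\,v_*'\in A\}$ has positive measure: on this set $b(|\la{\bf n},\sg\ra|)>0$ a.e.\ and each of $f(v'),f(v_*'),1-f(v_*)$ as well as $f(v_*),1-f(v'),1-f(v_*')$ is $>0$, so the integrands of ${\cal I}_f(v)$ and ${\cal J}_f(v)$ are both strictly positive on a positive-measure set, forcing ${\cal I}_f(v)>0$ and ${\cal J}_f(v)>0$ at the same $v$. To get such a $v$ I would fix a Lebesgue density point of $A$, translate it to the origin, and for the ball $B_\rho$ of radius $\rho$ centered at $0$ compare
$$Q(\rho)=\inttt_{\bRRSn}{\bf 1}_{B_\rho}(v)\,{\bf 1}_{B_\rho}(v_*)\,{\bf 1}_A(v_*)\,{\bf 1}_A(v')\,{\bf 1}_A(v_*')\,b(|\la{\bf n},\sg\ra|)\,{\rm d}v_*{\rm d}\sg\,{\rm d}v$$
with $Q_0(\rho)$, the same integral without the three factors ${\bf 1}_A$, for which $Q_0(\rho)=\big(\int_{\bSn}b(|\la{\bf n},\sg\ra|)\,{\rm d}\sg\big)mes(B_\rho)^2>0$. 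Using $1-{\bf 1}_A(v_*){\bf 1}_A(v'){\bf 1}_A(v_*')\le{\bf 1}_{A^c}(v_*)+{\bf 1}_{A^c}(v')+{\bf 1}_{A^c}(v_*')$, the error $Q_0(\rho)-Q(\rho)$ splits into three terms: the $v_*$-term equals $\big(\int_{\bSn}b(|\la{\bf n},\sg\ra|)\,{\rm d}\sg\big)mes(B_\rho)\,mes(B_\rho\setminus A)$, while the $v'$- and $v_*'$-terms, estimated via Lemma \ref{lemma2.4} with $\Psi={\bf 1}_{[0,2\rho]}$ (since $|v-v_*|\le2\rho$ on $B_\rho\times B_\rho$ and $B_{2\rho}(v)\subset B_{3\rho}$ for $v\in B_\rho$), are each at most $2^{n+1}|{\mS}^{n-2}|\int_0^{\pi/2}\sin^n(\theta)\,{\rm d}\theta\,mes(B_\rho)\,mes(B_{3\rho}\setminus A)$. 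Since at a density point $mes(B_{c\rho}\setminus A)=o(\rho^n)$ for each fixed $c>0$, all three error terms are $o(mes(B_\rho)^2)=o(Q_0(\rho))$, so $Q(\rho)>0$ for $\rho$ small; then any $v\in B_\rho$ whose inner $(v_*,\sg)$-integral is positive does the job.

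I expect (c) to be the main obstacle: parts (a) and (b) are essentially bookkeeping on top of Lemma \ref{lemma2.4}, whereas (c) hinges on choosing the right configuration — all of $v_*,v',v_*'$ in $A$, which serves ${\cal I}_f$ and ${\cal J}_f$ simultaneously — and on the quantitative density estimate guaranteeing that the collisional error terms do not overcome the positivity of $Q_0(\rho)$. Measurability of ${\bf 1}_A(v')$ and ${\bf 1}_A(v_*')$ is not an issue, since $v',v_*'$ depend continuously on $(v,v_*,\sg)$ off the null set $\{v=v_*\}$ and, by the Note above, $A$ may be taken Borel.
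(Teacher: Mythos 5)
Your proposal is correct. Parts (a) and (b), and the well-definedness, follow the paper's own route almost verbatim: the same telescoping bound $|a_1a_2a_3-b_1b_2b_3|\le|a_1-b_1|+|a_2-b_2|+|a_3-b_3|$ for factors in $[0,1]$, the same use of Lemma \ref{lemma2.4} with $b(|\cos(2\theta)|)=\sin^2(2\theta)$ giving the constant $C=(2^{n+2}+2)|{\mS}^{n-2}|\int_0^{\pi/2}\sin^n(\theta){\rm d}\theta$, and the same translation identity ${\cal I}_f(v+h)={\cal I}_{f_h}(v)$ combined with continuity of translation in $L^1$ for (b).

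Part (c) is where you genuinely diverge, and your alternative is sound. The paper fixes a Lebesgue point $v$ of the \emph{function} $f$ with $0<f(v)<1$, and shows via the averaged quantity $L_v(\dt)$ and Lemma \ref{lemma2.4} that the restricted integral $\fr{1}{mes(B_\dt)}\intt_{B_\dt(v)\times{\bSn}}b\,f'f_*'(1-f_*)\,{\rm d}v_*{\rm d}\sg$ converges to $A[f(v)]^2(1-f(v))>0$ (and analogously $Af(v)[1-f(v)]^2$ for ${\cal J}_f$), so that the \emph{specific} point $v$ already lies in ${\bRn}({\cal I}_f>0)\cap{\bRn}({\cal J}_f>0)$. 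You instead take a density point of the \emph{set} $A={\bRn}(0<f<1)$ and compare the triple integral $Q(\rho)$ with $Q_0(\rho)$, bounding the three error terms (the $v_*$-term directly, the $v'$- and $v_*'$-terms via Lemma \ref{lemma2.4} with $\Psi={\bf 1}_{[0,2\rho]}$) by $o(mes(B_\rho)^2)$, and then extract by Fubini \emph{some} $v\in B_\rho$ at which the configuration $v_*,v',v_*'\in A$ has positive $(v_*,\sg)$-measure; this simultaneously forces ${\cal I}_f(v)>0$ and ${\cal J}_f(v)>0$ since each factor of either integrand is then strictly positive a.e.\ there. Both arguments rest on Lebesgue differentiation and the same kernel computation $\fr{\sin^{n-2}(\theta)}{\cos^2(\theta)}\sin^2(2\theta)=4\sin^n(\theta)$; the paper's version is more quantitative and pins down the point, while yours needs only the density of the set $\{0<f<1\}$ (never the value of $f$ at the chosen point) and settles for existence, which is all the lemma asks. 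Your closing remarks on measurability of ${\bf 1}_A(v')$, ${\bf 1}_A(v_*')$ and on $mes(B_{c\rho}\setminus A)=o(\rho^n)$ close the only places where a gap could have arisen, so I see no error.
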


\begin{proof} The proof of that ${\cal I}_f,{\cal J}_f$ are well-defined on ${\bRn}$ 
is included in the proof of property (a): choose $g=0$ in property (a) we have 
${\cal I}_g=0$, ${\cal J}_g=0$ on ${\bRn}$ and so $0\le {\cal I}_f(v), 
{\cal J}_f(v)\le C\|f\|_{L^1}$ for all $v\in {\bRn}$.

Proof of (a): Using Lemma \ref{lemma2.4} and $b(|\cos(2\theta)|)=\sin^2(2\theta)$ we have
\begin{eqnarray*}
&& |{\cal I}_f(v)-{\cal I}_g(v)|,\ \  |{\cal J}_f(v)-{\cal J}_g(v)|\\
&& \le \intt_{\bRSn}b(|\la {\bf n},\sg\ra|)\big[|(f-g)(v')|+|(f-g)(v_*')|+
|(f-g)(v_*)|\big]
{\rm d}v_*{\rm d}\sg \\
&& \le C\|f-g\|_{L^1} \qquad \forall \ v\in{\bRn}.
\end{eqnarray*}

Proof of (b): Denote
$f_h(v)=f(v+h)$.  We have
${\cal I}_f(v+h)={\cal I}_{f_h}(v), {\cal J}_f(v+h)={\cal J}_{f_h}(v)$ and it follows from 
property (a) that
$$|{\cal I}_f(v+h)-{\cal I}_f(v)|,\ \  |{\cal J}_f(v+h)-{\cal J}_f(v)|\le C\|f_h-f\|_{L^1}\quad
\forall\, v,h\in{\bRn}.$$
So ${\cal I}_f$, ${\cal J}_f$ are continuous on ${\bRn}$.

Proof of (c): Suppose $mes({\bRn}(0<f<1))>0$.
Then there is a Lebesgue point $v\in {\bRn}$ of $f$ satisfying
$0<f(v)<1$. Let $B_r(v)$ denote a closed ball in ${\bRn}$ with center $v$ and radius
$r>0$, and let 
$$L_{v}(r)=\fr{1}{mes(B_r)}\int_{B_r(v)}|f(v_*)-f(v)|{\rm d}v_*, \quad r>0.$$
Then $r\mapsto L_{v}(r)$ is bounded and $L_{v}(r)\to 0$ as $r\to 0+$ because 
$v$ is a Lebesgue point of $f$. Let
\beas A=\int_{{\bSn}}
b(|\la {\bf n},\sg\ra|){\rm d}\sg
=|{\mS}^{n-2}|\int_{0}^{\pi}
b(|\cos(\theta)|)\sin^{n-2}(\theta){\rm d}\theta. \eeas
In Lemma \ref{lemma2.4}, let us choose $\Psi(r)={\bf 1}_{\{0\le r\le\dt\}}$ for $\dt>0$. 
Using $0\le f\le 1$ on ${\bRn}$ gives
$$|f'f_*'(1-f_*)-f^2(1-f)|\le |f'-f|+|f_*'-f|+|f_*-f|.$$
From these we have
\begin{eqnarray*}
&& \Big|\fr{1}{mes(B_{\dt})}\intt_{{B_{\dt}(v)}\times{\bSn}}
b(|\la {\bf n},\sg\ra|)f'f_*'(1-f_*) {\rm d}v_*{\rm d}\sg-A[f(v)]^2(1-f(v))\Big|\\
&& \le\fr{1}{mes(B_{\dt})}\intt_{\bRSn}b(|\la {\bf n},\sg\ra|)
{\bf 1}_{\{|v-v_*|\le\dt\}}\\
&& \qquad \qquad \qquad \times\big(|f(v')-f(v)|+|f(v_*')-f(v)|+|f(v_*)-f(v)|\big)
{\rm d}v_*{\rm d}\sg\\
&& =2\fr{1}{mes(B_{\dt})}\intt_{\bRSn}b(|\la {\bf n},\sg\ra|)
{\bf 1}_{\{|v-v_*|\le\dt\}}|f(v')-f(v)|{\rm d}v_*{\rm d}\sg+AL_{v}(\dt)
\\
&&=2^{n}|{\mS}^{n-2}|
\int_{0}^{\pi/2}\fr{\sin^{n-2}(\theta)}{\cos^{2}(\theta)}b(|\cos(2\theta)|)
\Big(\int_{{\bRn}}{\bf 1}_{\{\fr{|v-v_*|}{\cos(\theta)}\le\dt\}}|f(v_*)-f(v)|{\rm d}v_*\Big){\rm d}\theta
+AL_{v}(\dt)
\\
&&=4|{\mS}^{n-2}|
\int_{0}^{\pi/2}\sin^{n}(2\theta)L_{v}(\dt\cos(\theta)){\rm d}\theta
+AL_{v}(\dt)\rightarrow 0 \ \ \
(\dt\rightarrow 0)
\end{eqnarray*}
since $L_{v}(r)$ is bounded and $L_{v}(r)\to 0 \ ( r\to 0+).$  Thus for sufficiently small $\dt>0$,
$${\cal I}_f(v)\ge\intt_{{B_{\dt}(v)}\times{\bS}}
b(|\la {\bf n},\sg\ra|)f'f_*'(1-f_*){\rm d}v_*{\rm d}\sg> \fr{1}{2}mes(B_{\dt})A[f(v)]^2(1-f(v)) >0.$$
Similarly, using $f(v)[1-f(v)]^2$ we also have
${\cal J}_f(v)>0$. Thus $v\in {\bRn}({\cal I}_f>0)\cap{\bRn}({\cal J}_f>0)$.
\end{proof}

\begin{center}\section{ Proof of  Theorem \ref{theorem1.1} } \end{center}

Let $f$ be given in Theorem \ref{theorem1.1}. 
Then we have, for instance, either $S(f)>0$ or $S(f)=0$. Thus in the following
we need only to prove that (\ref{1.9}),(\ref{1.10}),(\ref{1.11}) in part (I) are equivalent to each other, and
(\ref{1.12}),(\ref{1.13}),(\ref{1.14}) in part (II) are equivalent to each other.

Part (I):  Proof of ``$(\ref{1.9})\Longrightarrow(\ref{1.11})"$:
Suppose $S(f)>0$. By definition of $S(f)$, this is equivalent to
$mes({\bRn}(0<f<1))>0.$
We now  prove that in this case $f$ is a Fermi-Dirac distribution.
Let ${\cal I}_f(v), {\cal J}_f(v)$ be defined in Lemma \ref{lemma2.5}.
By Lemma \ref{lemma2.5}, ${\cal I}_f$, ${\cal J}_f$ are continuous on ${\bRn}$
and the set ${\bRn}({\cal I}_f>0)\cap{\bRn}({\cal J}_f>0)$ is non-empty.
Recall that $f$ satisfies the equation (\ref{1.4}).
Multiplying both sides of equation (\ref{1.4}) by $b(|\la {\bf n},\sg\ra|)$ and then taking
integration with respect to $(v_*,\sg)$
we deduce, for a null set $Z\subset {\bRn}$, that
\be f(v)\left [{\cal I}_f(v)+{\cal J}_f(v)\right ]={\cal I}_f(v)\qquad \forall\,v\in{\bRn}\setminus Z.
\lb{3.1}\ee
Define 
$$g(v)=\left\{ \begin{array}{lll} {\cal I}_f(v)/\left [{\cal I}_f(v)+{\cal J}_f(v)\right ]\quad
& \mbox{if\,\, ${\cal I}_f(v)+{\cal J}_f(v)>0$} \\ \\
f(v)\quad & \mbox{if\,\, ${\cal I}_f(v)+{\cal J}_f(v)=0$} 
 \end{array} \right.  $$
From (\ref{3.1}) we see that $g=f$ a.e. on ${\bR}$ and so $g\in L^1_2({\bRn})$ and 
 $0\le g\le 1$ on ${\bRn}$. Thus by Lemma \ref{lemma2.5} we conclude that 
 ${\cal I}_f\equiv {\cal I}_g$, ${\cal J}_f\equiv {\cal J}_g$.
We need to prove that
$${\cal O}:={\bRn}({\cal I}_g>0)\cap{\bRn}({\cal J}_g>0)={\bRn}.$$
Since ${\cal O}$ is open and non-empty, by translation we can assume that $0\in {\cal O}$, so there 
is $\dt>0$ such that
$B_{\dt}(0)\subset {\cal O}$. Let 
$$\ld=\fr{1}{2}(1+\sqrt{3/2}\,),\quad \eta
=\fr{1}{2}(\sqrt{3/2}\,-1)\dt,$$
$${\cal O}_{\dt}(v)=\{(v_*,\sg)\in {\bRSn}\,\, |\,\,\, |v_*|<\eta,\ v_*\neq v ,
\, \sqrt{1/3}< \cos(\theta/2)<\sqrt{2/3}\,\},\quad v\in{\bRn}$$
where $\theta=\arccos(\langle{v-v_*,\sg}\rangle/|v-v_*|)\in[0,\pi].$  
Using the elementary inequalities
$$|v'|\le \cos(\theta/2)|v|+\sin(\theta/2)|v_*|,  \ \ \ |v_*'|\le \sin(\theta/2)|v|+
\cos(\theta/2)|v_*|$$
one sees that
if $v\in B_{\ld\dt}(0)$ and $(v_*,\sg)\in{\cal O}_{\dt}(v)$, then
$v_*, v',v_*'\in B_{\dt}(0)$.
In fact in this case we have $|v_*|<\eta<\dt$, and
\beas&&
|v'|\le \cos(\theta/2)|v|+\sin(\theta/2)|v_*|
<\sqrt{2/3}\ld \dt+\sqrt{2/3}\eta=\dt
\\
&&
|v_*'|\le \sin(\theta/2)|v|+
\cos(\theta/2)|v_*|<\sqrt{2/3}\ld \dt+\sqrt{2/3}\eta=
\dt.\eeas
So $v_*, v', v_*'\in B_{\dt}(0)$.
Since $0<g={\cal I}_g/({\cal I}_g+{\cal J}_g)<1$
on $B_{\dt}(0)\subset {\cal O}$, this implies
that $g(v')g(v_*')(1-g(v_*))>0$,\, $g(v_*)(1-g(v'))(1-g(v_*'))>0$ for all
$(v_*,\sg)\in{\cal O}_{\dt}(v)$.
Therefore by definition of ${\cal I}_g$ and ${\cal J}_g$ we have
\beas&& {\cal I}_g(v)\ge \intt_{{\cal O}_{\dt}(v)}b(|\la {\bf n},\sg\ra|)g(v')
g(v_*')(1-g(v_*)){\rm d}v_*{\rm d}\sg>0,\\
&& {\cal J}_g(v)\ge \intt_{{\cal O}_{\dt}(v)}b(|\la {\bf n},\sg\ra|)g(v_*)
(1-g(v'))(1-g(v_*')){\rm d}v_*{\rm d}\sg>0\eeas
for all $v\in B_{\ld\dt}(0)$.
Here we have used an obvious fact that
the sets ${\cal O}_{\dt}(v)$ have strictly positive measure with respect to the measure element
${\rm d}v_*{\rm d}\sg$.
Thus $B_{\ld\dt}(0)\subset {\cal O}$. By iteration
we obtain $B_{\ld^n\dt}(0)\subset{\cal O}$, $n=1,2,3, ...,$
and so ${\cal O}={\bRn}$.  By definition of $g$ and that ${\cal I}_f\equiv {\cal I}_g$, ${\cal J}_f\equiv {\cal J}_g$ we conclude that $0<g(v)<1$ for all $v\in {\bRn}$ and
$g$ is continuous on ${\bRn}$. Since $g=f$ a.e. on ${\bRn}$,
it follows that $g$ satisfies Eq.(\ref{1.4}). Thus
$(\fr{g}{1-g})'(\fr{g}{1-g})_*'
=(\fr{g}{1-g})(\fr{g}{1-g})_*$  on
${\bRRSn}$,
and so by a well known result of
Arkeryd (\cite{A},\cite{CIP},\cite{TM}) we conclude  $\fr{g(v)}{1-g(v)}=ae^{-b|v-v_0|^2}$
for all $v\in {\bRn}$ for some constants
$a>0,b>0$ and $v_0\in {\bRn}$.
Thus
$$f(v)=g(v)=F_{a,b}(v)= \fr{ae^{-b|v-v_0|^2}}{1+ ae^{-b|v-v_0|^2}}\qquad {\rm a.e.}\quad v\in{\bRn}.$$
Proof of ``$(\ref{1.11})\Longrightarrow(\ref{1.9})"$: This is obvious since
${\bRn}(f=0), {\bRn}(f=1)$ have measure zero.

Proof of ``$(\ref{1.10})\Longleftrightarrow(\ref{1.11})"$: This is
Lemma \ref{lemma2.3}.

Part (II):  Proof of ``$(\ref{1.12})\Longrightarrow(\ref{1.14})"$:
Suppose that $S(f)=0.$ Since $0\le f\le 1$ on ${\bRn}$, this is equivalent to
 $mes({\bRn}(0<f<1))=0.$ We prove that  $f$ is an
indicator function of a ball.
Let $E={\bRn}(f=1)$. By $0\le f\le 1$ on ${\bRn}$
we have $f(v)={\bf 1}_{E}(v)$ a.e. $v\in{\bRn}$. Also the assumptions $f\in L^1_2({\bRn})$ and $\|f\|_{L^1_2}\neq 0$ imply
$0<mes(E)<\infty$.
Multiplying both sides of
Eq.(\ref{1.4}) by $f={\bf 1}_{E}$  leads to a single equation
$${\bf 1}_{E}(v){\bf 1}_{E}(v_*)[1-{\bf 1}_{E}(v')][1-{\bf 1}_{E}(v_*')]=0 \ \ \
a.e. \ \ (v,v_*,\sg)\in{\bRRS}.$$
So by Theorem \ref{theorem4.2} in Appendix, there is $0<R<\infty$ such that
$f(v)={\bf 1}_{\{|v-v_0|\le R\}}$ a.e. $v\in {\bRn}$.

Proof of ``$(\ref{1.14})\Longrightarrow(\ref{1.12})"$: This is obvious since
$y\log(y)=0$ for $y\in\{0, 1\}$.

Proof of ``$(\ref{1.13})\Longleftrightarrow(\ref{1.14})"$: This is the second conclusion of Lemma \ref{lemma2.2}. 
$\qquad \Box$

\begin{center}{ } \end{center}
\begin{center}\section{ Appendix: A characterization of Euclidean balls }\end{center}

\noindent {\bf 4.1.} Let ${\bRn}=({\bRn},\la \cdot,\cdot\ra)$ be the $n$-dimensional Euclidean
space with the induced norm $|x|=\sqrt{\la x,x\ra}, x\in {\bRn}$. Let $
B_r(x)=\{y\in {\bRn}\,\,|\,\,|y-x|\le r\}$
denote the $n$-dimensional closed ball (or $n$-ball) with the centre
$x\in {\bRn}$ and radius $r>0$, and let ${\bSn}=\{\sg\in {\bRn}\,\,|\,\,|\sg|=1\}$
be the  unit sphere. For any $E\subset {\bRn}$,
denote by $E^{\circ}$, $\p E$ and $\overline{E}$ the interior, boundary and
the closure of $E$ respectively.
It is also denoted by $mes(E)$ the Lebesgue measure
of a Lebesgue measurable set $E\subset {\bRn}$.
  An $n$-dimensional convex body in ${\bRn}$ is defined as a compact convex set
 in ${\bRn}$ with non-empty interior. Throughout this Appendix we assume that $n\ge 2$.

It is well-known that the Euclidean ball can be characterized by
convex bodies of constant width. For instance a result of L.Montejano \cite{M}
(see also \cite{MMO2019})
states that
 {\it if  $2\le k<n$, and $K\subset {\bRn}$ is an $n$-dimensional convex body
having the property that every
$k$-section of $K$ through a point $p_0\in K^{\circ}$ is a convex body of constant width,
then $K$ is a Euclidean $n$-ball.}
A closely related and perhaps more intuitive characterization of balls is as follows which is the first main result of the Appendix:
\begin{theorem}\label{theorem4.1}
Let $n\ge 2,$ let $K\subset {\bRn}$ be a compact set having at least two elements and
satisfying the following condition
\be \forall\,x,y\in \p K,\,\, \forall\,\sg\in {\bSn} \Longrightarrow\,
{\rm either}\,\,\,\fr{x+y}{2}+\fr{|x-y|}{2}\sg\in K\,\,\, {\rm or}\,\,\,
\fr{x+y}{2}-\fr{|x-y|}{2}\sg\in K.\label{4.1}\ee
Then  $K$ is an $n$-dimensional Euclidean ball.
\end{theorem}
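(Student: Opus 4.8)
The plan is to prove, in order, \textbf{(a)} that $K$ is convex; \textbf{(b)} the case $n=2$; and \textbf{(c)} the case $n\ge 3$, by slicing back to \textbf{(b)}. First it is convenient to restate \eqref{4.1} geometrically. Writing $S(c,\rho)=\{\,w\in\bRn:|w-c|=\rho\,\}$, for $x,y\in\partial K$ set $\Sigma_{x,y}=S\big(\fr{x+y}{2},\fr{|x-y|}{2}\big)$ — the sphere having $[x,y]$ as a diameter — and let $R_{x,y}$ be the reflection of $\bRn$ in its centre $\fr{x+y}{2}$. Then \eqref{4.1} says precisely $\Sigma_{x,y}\subseteq K\cup R_{x,y}(K)$ for all $x,y\in\partial K$, and note $x,y\in\Sigma_{x,y}$. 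Once \textbf{(a)}--\textbf{(c)} hold, the theorem follows from a short computation valid for every $n\ge 2$: let $d=\operatorname{diam}(K)$, attained (by compactness) at $x_0,y_0\in K$, which lie in $\partial K$ since a point of $K$ at maximal distance from another cannot be interior; put $m_0=\fr{x_0+y_0}{2}$. The core of the proof is the claim $\Sigma_{x_0,y_0}=S(m_0,d/2)\subseteq K$ (discussed below). Granting it together with convexity, $B_{d/2}(m_0)=\operatorname{conv}\big(S(m_0,d/2)\big)\subseteq K$, while conversely every $z\in K$ satisfies $|z-w|\le d$ for all $w\in S(m_0,d/2)\subseteq K$, so $z\in\bigcap_{|w-m_0|=d/2}B_d(w)$, and since $\sup_{|w-m_0|=d/2}|z-w|=|z-m_0|+d/2$ this intersection is $B_{d/2}(m_0)$; hence $K=B_{d/2}(m_0)$, an $n$-ball. (In particular $K^\circ\neq\varnothing$, so non-emptiness of the interior need not be assumed or proved separately.)

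For \textbf{(a)}, convexity, I would use a cavity argument by contradiction. If the closed set $K$ is not convex it is not midpoint-convex, so some chord with endpoints in $K$ has its open middle arc outside $K$; shrinking gives $a,b\in\partial K$ with the open segment $(a,b)$ disjoint from $K$, and \eqref{4.1} applied to $a,b$ yields $\Sigma_{a,b}\subseteq K\cup R_{a,b}(K)$, whose centre lies in the cavity. To reach a contradiction one must exhibit $w\in\Sigma_{a,b}$ with both $w\notin K$ and $R_{a,b}(w)\notin K$; for this the chord should be chosen with care — for instance so that its midpoint is a point $q^*$ of $\operatorname{conv}(K)$ realizing $\max_{\operatorname{conv}(K)}\operatorname{dist}(\cdot,K)$, and $a,b$ are two nearest points of $K$ to $q^*$ with $q^*$ between them, so that a whole relatively open cap of $\Sigma_{a,b}$ lies inside the open ball about $q^*$ of radius $\operatorname{dist}(q^*,K)$, which misses $K$. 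This step is delicate: a single application of \eqref{4.1} to one cavity sphere does not by itself close the argument (the reflected cap can land back outside the forbidden ball), and one must either combine several such spheres or use that the nearest-point set of $q^*$ is not contained in a half-space through $q^*$. I expect this, together with \textbf{(b)}, to carry the real work.

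The case \textbf{(b)} $n=2$ is the crux. With $K$ now a planar convex body and $d,x_0,y_0,m_0$ as above, let $C_0=S(m_0,d/2)$, a circle through $x_0,y_0$ with $K\subseteq B_d(x_0)\cap B_d(y_0)$. By \eqref{4.1} applied to $x_0,y_0$, the set $A:=C_0\cap K$ is relatively closed in $C_0$ and $A\cup R_0(A)=C_0$, where $R_0=R_{x_0,y_0}$ is the antipodal involution of $C_0$ about $m_0$; I must upgrade this to $A=C_0$. Equivalently, the relatively open set $U:=C_0\setminus A$ must be empty; note $U$ contains no $R_0$-antipodal pair (two points of $C_0$ both missing $K$ would violate \eqref{4.1}), $U$ omits $x_0,y_0$, the relative boundary of $U$ lies in $A\cap\partial K$, and every point of $A$ whose $R_0$-antipode is also in $A$ lies in $\partial K$ (it realizes $\operatorname{diam}K$ with that antipode); also $m_0\in\operatorname{conv}(A)\subseteq K$. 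If $U\neq\varnothing$, I would feed non-antipodal pairs $p,q\in C_0\cap\partial K$ back into \eqref{4.1}: the sphere $\Sigma_{p,q}$ has centre strictly inside $C_0$ and meets $C_0$ only at $p,q$, and a careful analysis of which of its two arcs is forced into $K$ (using convexity, $m_0\in K$, and that a component of $U$ with its chord bounds a circular segment disjoint from $K$) lets one propagate membership in $K$ along $C_0$ and shrink $U$ to nothing, whence $C_0\subseteq K$. Breaking the $K\leftrightarrow R_0(K)$ symmetry — the ``half and half'' that a single use of \eqref{4.1} leaves — is exactly the point where the full strength of \eqref{4.1} over all $\sg\in\bSn$ (not just ``width'' directions) is needed, and is the main obstacle; for $n\ge 3$ it was handled by the earlier arguments of \cite{Lu2001}, and must now be carried out for $n=2$.

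Finally \textbf{(c)}, $n\ge 3$, reduces to \textbf{(b)}. Since $K$ is convex, for any $2$-plane $H$ through the line $x_0y_0$ the section $K\cap H$ is a planar convex set with $\partial(K\cap H)=\partial K\cap H$ and $x_0,y_0\in\partial(K\cap H)$, hence $\operatorname{diam}(K\cap H)=d$; moreover $K\cap H$ satisfies \eqref{4.1} within $H$, because for $x,y\in\partial(K\cap H)\subseteq\partial K$ and $\sg\in H\cap\bSn$ the point $\fr{x+y}{2}\pm\fr{|x-y|}{2}\sg$ lies in $H$, so \eqref{4.1} for $K$ places one of the two in $K\cap H$. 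By the case $n=2$, $K\cap H=B_{d/2}(m_0)\cap H$ for every such $H$; since the $2$-planes through the line $x_0y_0$ sweep out all of $\bRn$, this gives $B_{d/2}(m_0)\subseteq K$ and, taking $H=\operatorname{aff}\{x_0,y_0,z\}$ for $z\in K$ off that line (points of $K$ on the line lie in $[x_0,y_0]$), also $K\subseteq B_{d/2}(m_0)$. Thus $K=B_{d/2}(m_0)$, completing the proof. The genuine geometric content is confined to \textbf{(a)} and to the symmetry-breaking in \textbf{(b)}.
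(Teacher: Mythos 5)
Your skeleton --- (a) convexity, (b) the planar case, (c) reduction of $n\ge3$ to $n=2$ by slicing --- matches the paper's broad strategy (the paper proves convexity, then constant width, then the case $n=2$, then inducts on dimension), and the pieces you actually carry out are correct: the final assembly (if $S(m_0,d/2)\subseteq K$ and $K$ is convex then $K=B_{d/2}(m_0)$) is fine, and your step (c) is sound and even a bit more direct than the paper's Step 5, since for a $2$-plane $H$ through $x_0,y_0$ the relative boundary of $K\cap H$ is always contained in $\partial K$, so $K\cap H$ inherits condition (\ref{4.1}), and a planar disk of diameter $d$ containing $x_0,y_0$ must be $B_{d/2}(m_0)\cap H$ (indeed, had you the planar case for general compact sets, step (a) would not even be needed for $n\ge3$).

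The genuine problem is that (a) and (b) --- which carry all the mathematical content --- are not proved but only sketched, with the holes acknowledged in your own text. In (a), the cavity argument is incomplete: as you note, one application of (\ref{4.1}) to a cavity sphere does not produce the contradiction, and it is not even clear that the configuration you propose exists (two nearest points of $K$ to the farthest point $q^*$ of $\mathrm{conv}(K)$ need not have $q^*$ as their midpoint). In (b), the claim $S(m_0,d/2)\subseteq K$, equivalently $U=\emptyset$, is essentially the planar theorem itself, and ``propagate membership along $C_0$ and shrink $U$ to nothing'' names the goal without supplying a mechanism; breaking the $K\leftrightarrow R_0(K)$ symmetry is precisely the difficulty, and nothing in the sketch does it. For comparison, the paper obtains convexity not by a cavity argument but by proving $\partial\,\mathrm{conv}(K)\subseteq\partial K$ through extreme points and the separation theorem, then shows $K$ has constant width (via the Chakerian--Groemer characterization); constant width is what provides the quantitative leverage used in the planar case: concentric insphere $B_r$ and circumsphere $B_R$ with $r+R=\mathrm{diam}(K)$ and $r\ge(\sqrt3-1)R$, the antipodal correspondence between $\partial B_r\cap\partial K$ and $\partial B_R\cap\partial K$, and Lemma \ref{lemma5.1} (every closed half of $\partial B_r$ meets $\partial K$). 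With these, assuming $r<R$, Step 4 constructs a rectangle with three vertices on $\partial K$ and the fourth in $K^\circ$ and contradicts (\ref{4.1}) using strictly exposed boundary points. Your plan never invokes constant width or any substitute for it, so as written there is no route from your set-up to $U=\emptyset$; completing the proof requires either a genuinely new symmetry-breaking argument in the plane or an argument along the paper's constant-width lines.
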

The condition (\ref{4.1}) means that for every pair $\{x, y\}\subset \p K$, the
sphere with the centre $\fr{1}{2}(x+y)$ and radius $\fr{1}{2}|x-y|$ has the property that
every pair $\{\fr{1}{2}(x+y)+\fr{1}{2}|x-y|\sg,\,\fr{1}{2}(x+y)-\fr{1}{2}|x-y|\sg\}$ of antipodal points of the sphere meets $K$. The necessity of the condition (\ref{4.1}) is obvious, which is due to the identity
$$\Big|\fr{x+y}{2}+\fr{|x-y|}{2}\sg-x_0\Big|^2+
\Big|\fr{x+y}{2}-\fr{|x-y|}{2}\sg-x_0\Big|^2
=|x-x_0|^2+|y-x_0|^2$$
for all $x,y,x_0\in {\bRn}$ and all $\sg\in {\bSn}$, from which
one sees that for any $r>0$ the ball $K=B_r(x_0)$ satisfies the condition (\ref{4.1}).

The Reuleaux triangle $K_{\rm R}\subset {\mR}^2$, which is a typical example of convex body of constant width which is not a disk, does not satisfies
(\ref{4.1}): let $a, b\in \p K_{\rm R}$ lie in  a symmetric axis of $K_{\rm R}$ with $|a-b|={\rm diam}(K_{\rm R})$
(diameter of $K_{\rm R}$), and consider the middle point $\fr{1}{2}(a+b)$ and
let $\sg\in {\mS}^1$ be perpendicular to $a-b$. Then neither $a':=\fr{1}{2}(a+b)+\fr{1}{2}|a-b|\sg$ nor
$b':=\fr{1}{2}(a+b)-\fr{1}{2}|a-b|\sg$ belongs to $K_{\rm R}$.

Twenty years ago, under a stronger condition that
``$\p K$" in  (\ref{4.1}) is replaced by ``$K$" and assuming that $mes(K)>0$,
 the author \cite{Lu2001} proved that $K$ is a convex body of constant width for all $n\ge 2$, and if $n\ge 3$, then
$K$ is a Euclidean $n$-ball.
The proof in \cite{Lu2001} for $K$ to be a ball relies on the result of L.Montejano \cite{M}
mentioned above. Since the characterization in \cite{M} works
only for $n\ge 3$, one has to look for other methods for dealing with the case of $n=2$.
Observe that the property (\ref{4.1}) holds also for some cross-sections of $K$. Thus once
(\ref{4.1}) as a characterization of balls holds true for $n=2$, the proof for $n\ge 3$ follows from using cross-section and induction argument (see the proof of Theorem \ref{theorem4.1} below).

A measure version as (\ref{4.1}), which initiated our study for (\ref{4.1}), comes
from the collisional kinetic theory. Let $v,v_*\in {\bRn}$ be
the velocities of two particles (with the same mass) just before they collide,
and let $v', v_*'$ be their velocities
just after their collision. Suppose the collision conserves the momentum and kinetic energy, i.e. $v'+v_*'=v+v_*,
|v'|^2+|v_*'|^2=|v|^2+|v_*|^2.$  Then  $v', v_*'$ can be expressed as
\be v'=\fr{v+v_*}{2}+\fr{|v-v_*|}{2}\sg,\quad
v_*'=\fr{v+v_*}{2}-\fr{|v-v_*|}{2}\sg,\quad \sg\in {\bSn}.\label{I}\ee
With the notation (\ref{I}), the second main result of the Appendix can be
stated as follows:

\begin{theorem}\label{theorem4.2}
Let $n\ge 2,$ let $E\subset {\bRn}$ be a Lebesgue measurable set with $0<mes(E)<\infty$ and satisfy
\be {\bf 1}_{E}(v){\bf 1}_{E}(v_*)(1-{\bf 1}_{E}(v'))(1-{\bf 1}_{E}(v_*'))
=0\quad {\rm \, a.e.}\quad (v,v_*,\sg)\in {\bRn}\times{\bRn}\times{\bSn}.\label{4.2}\ee
Then  there is a ball $K=B_R(v_0)\subset {\bRn}$ such that
$mes\big((E\setminus K)\cup (K\setminus E)\big)=0$. In other words, up to a set of measure zero,
$E$ is an $n$-dimensional Euclidean ball.
\end{theorem}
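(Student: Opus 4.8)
The plan is to reduce Theorem~\ref{theorem4.2} to Theorem~\ref{theorem4.1}: attach to $E$ a concrete closed set on which the geometric condition (\ref{4.1}) can be verified, and then show that this set agrees with $E$ up to a null set. Let $E^{(1)}$ and $E^{(0)}$ be the sets of points of Lebesgue density $1$ and $0$ of $E$, and $Z={\bRn}\setminus(E^{(0)}\cup E^{(1)})$; then $mes(Z)=0$, $mes(E\triangle E^{(1)})=0$ and $0<mes(E^{(1)})=mes(E)<\infty$. Put
$$K=\overline{E^{(1)}}=\{v\in{\bRn}\,:\,mes\big(E\cap B_r(v)\big)>0\ \ \forall\,r>0\},$$
the essential closure of $E$; thus $v\notin K$ exactly when some open neighbourhood $U$ of $v$ satisfies $mes(E\cap U)=0$, a Lindel\"of covering argument gives $mes(E\setminus K)=0$, and $mes(K)\ge mes(E)>0$. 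I would then argue in three steps: (i) $K$ satisfies (\ref{4.1}), in fact its stronger form with ``$\p K$'' replaced by ``$K$''; (ii) by the reduction of \cite{Lu2001} recalled above (the stronger form of (\ref{4.1}) together with $mes(K)>0$ forces the closed set $K$ to be a convex body, hence compact with at least two elements) and then by Theorem~\ref{theorem4.1}, $K=B_R(v_0)$ for some $v_0\in{\bRn}$, $0<R<\infty$; (iii) $mes(K\setminus E)=0$, which with $mes(E\setminus K)=0$ yields $mes\big((E\setminus K)\cup(K\setminus E)\big)=0$.

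For step (i) it suffices to show: for all $p,q\in E^{(1)}$ with $p\ne q$ and all $\sg\in{\bSn}$, at least one of $w^{\pm}:=\fr{p+q}{2}\pm\fr{|p-q|}{2}\sg$ lies in $K$. Indeed, approximating arbitrary $p,q\in K$ by points of $E^{(1)}$ and using that $K$ is closed then gives the same for all $p,q\in K$, which contains (\ref{4.1}) because $\p K\subset K$. So suppose, for a contradiction, that both $w^{+}$ and $w^{-}$ have open neighbourhoods $U,V$ with $mes(E\cap U)=mes(E\cap V)=0$. Computing the differential of the collision map $\Phi:(v,v_*,\sg)\mapsto(v',v_*')$ shows that, on $\{v\ne v_*\}$, $\Phi$ is a submersion onto an open subset of ${\bRn}\times{\bRn}$; hence on the box $N_\dt=B_\dt(p)\times B_\dt(q)\times\big(B_\dt(\sg)\cap{\bSn}\big)$ the push-forward of ${\bf 1}_{N_\dt}\,{\rm d}v\,{\rm d}v_*\,{\rm d}\sg$ under $\Phi$ is absolutely continuous with density $\ge c\,\dt^{n-1}$ on a ball of radius $\sim\dt$ about $(w^{+},w^{-})$. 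Since $mes(E\cap U)=mes(E\cap V)=0$ and $p,q$ are density-$1$ points of $E$ (so that $\int_{N_\dt}(1-{\bf 1}_E(v))\,{\rm d}v\,{\rm d}v_*\,{\rm d}\sg$ and the analogous $v_*$-integral are $o(\dt^{3n-1})$), a short estimate gives, for all small $\dt$,
$$\inttt_{N_\dt}{\bf 1}_E(v){\bf 1}_E(v_*)\big(1-{\bf 1}_E(v')\big)\big(1-{\bf 1}_E(v_*')\big)\,{\rm d}v\,{\rm d}v_*\,{\rm d}\sg\ \ge\ c\,\dt^{3n-1}-o(\dt^{3n-1})\ >\ 0,$$
contradicting (\ref{4.2}).

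For step (iii), with $K=B_R(v_0)$, I would show $E$ essentially fills $B_R(v_0)$ by a companion localization. Note that $E^{(1)}$ is dense in $K$ and meets every ball centred in $K$ in a set of positive measure. Given $z\in B_R(v_0)^{\circ}$, choose $v_1,v_2\in\p B_R(v_0)$ seen at a right angle from $z$ (so $\la z-v_1,z-v_2\ra=0$) and density-$1$ points $v_1^{\flat},v_2^{\flat}\in E^{(1)}$ close to $v_1,v_2$; the collision sphere through $v_1^{\flat},v_2^{\flat}$ then passes near $z$, so for a suitable $\sg$ one has $v'$ near $z$, while energy conservation gives $|v_*'-v_0|^2=|v_1^{\flat}-v_0|^2+|v_2^{\flat}-v_0|^2-|v'-v_0|^2>R^2$, i.e. $v_*'\notin B_R(v_0)$ and hence $1-{\bf 1}_E(v_*')=1$ a.e. nearby. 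On a neighbourhood of $(v_1^{\flat},v_2^{\flat},\sg)$ equation (\ref{4.2}) thus reduces to ${\bf 1}_E(v){\bf 1}_E(v_*)\big(1-{\bf 1}_E(v')\big)=0$ a.e.; integrating, using again that $v_1^{\flat},v_2^{\flat}\in E^{(1)}$ and that $(v,v_*,\sg)\mapsto v'$ is a submersion, gives $mes\big(B_\rho(v')\setminus E\big)=0$ for some $\rho>0$. Since such $v'$ can be taken arbitrarily close to the arbitrary interior point $z$, a measure-density bootstrap (exploiting once more that $E^{(1)}$ has positive measure in every ball centred in $K$) upgrades this to $mes\big(B_R(v_0)^{\circ}\setminus E\big)=0$, which completes the proof.

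The hard part is step (i): extracting a genuinely \emph{topological} statement — condition (\ref{4.1}) for a fixed closed set — from the merely \emph{almost-everywhere} functional identity (\ref{4.2}). This is precisely where the submersion structure of the collision map and the systematic use of Lebesgue density points are essential; once (\ref{4.1}) holds for $K$, the geometry is supplied by \cite{Lu2001} (compactness and convexity) and by Theorem~\ref{theorem4.1} (sphericity). Step (iii) is of the same measure-theoretic character and requires comparable care, while step (ii) is essentially a quotation.
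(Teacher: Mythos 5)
Your overall architecture (pass from the a.e. identity (\ref{4.2}) to the pointwise geometric condition (\ref{4.1}) for a closed set built from the density points of $E$, invoke Theorem \ref{theorem4.1}, then show $E$ essentially fills the resulting ball) is the same as the paper's, which packages the measure-theoretic work into Lemma \ref{lemma5.5}; and your step (i), deriving the strengthened form of (\ref{4.1}) for $K=\overline{E^{(1)}}$ by a submersion/push-forward argument at pairs of density points, is sound and is a legitimate variant of the paper's Fatou-type argument. But there are two genuine gaps. The first is compactness of $K$ in your step (ii). Theorem \ref{theorem4.1} requires $K$ compact, and the implication you quote from \cite{Lu2001} --- ``the stronger form of (\ref{4.1}) together with $mes(K)>0$ forces the closed set $K$ to be a convex body, hence compact'' --- is false for unbounded closed sets: a closed half-space $\{x_1\ge 0\}$ (or all of ${\bRn}$) satisfies the stronger form of (\ref{4.1}) and has positive measure. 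The cited result presupposes boundedness, and nothing in your proposal shows that $\overline{E^{(1)}}$ is bounded; finiteness of $mes(E)$ does not control $mes(K)$, let alone its diameter. This is exactly the content of Step 2 of Lemma \ref{lemma5.5} in the paper, which proves boundedness by a quantitative estimate (integrating the condition against the weight $(|v-v_*|/|v_*|)^{1/2}$ via the identity (\ref{5.3}) and the uniform lower bound (\ref{5.14})); some substitute for that argument is indispensable.

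The second gap is the final ``measure-density bootstrap'' in step (iii). Trace what your localization actually yields: for a fixed admissible configuration $(v_1^{\flat},v_2^{\flat},\sg)$ and box size $\dt\to 0$, the push-forward estimate gives $mes\big(B_{c\dt}(v'_0)\setminus E\big)\le \ep(\dt)\,\dt^n$ with $\ep(\dt)\to 0$, where $v'_0$ is the \emph{fixed} image point near $z$ and $\ep$ depends on the density quality of the chosen $v_i^{\flat}$. That says precisely that $v'_0$ is a point of density $1$ of $E$; letting $z$ range over the interior you conclude only that density-$1$ points of $E$ are dense in $B_R(v_0)^{\circ}$ --- which you already knew ($E^{(1)}$ is dense in $K$) and which does not imply $mes\big(B_R(v_0)^{\circ}\setminus E\big)=0$ (a density point of $E^{c}$ can be a limit of density points of $E$). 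To close this one needs a \emph{uniform} lower bound of the type proved in the paper, namely (\ref{5.17}): $mes(E\cap B)\ge \alpha\, mes(B)$ with a single $\alpha>0$ for every ball $B\subset K^{\circ}$, after which the Lebesgue density theorem finishes; obtaining that uniformity (Step 3 of Lemma \ref{lemma5.5}, via the spherical-average inequality (\ref{5.4}), the a.e. bound (\ref{5.14}) applied at two levels, and the positivity of the quantity $J$) is real work, not a routine upgrade, so as written step (iii) does not reach the conclusion.
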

Here  `` a.e." (= for `` almost every") is taken with respect to the product measure on
${\bRn}\times{\bRn}\times{\bSn}$, i.e. (\ref{4.2}) means
\be \inttt_{{\bRn}\times{\bRn}\times {\bSn}}
{\bf 1}_{E}(v){\bf 1}_{E}(v_*)(1-{\bf 1}_{E}(v'))(1-{\bf 1}_{E}(v_*'))
{\rm d}v{\rm d}v_*{\rm d}\sg=0\label{EE}\ee
where ${\rm d}\sg$ is the usual area element of the unit sphere ${\bSn}$.

It is proved in \cite{Lu2001} (for $n=3$)
that if a Fermi-Dirac particle system is at the lowest temperature, then the density
function of the corresponding distribution is an equilibrium and this equilibrium can only be an indicator function $v\mapsto {\bf 1}_{E}(v)$ of a Lebesgue measurable set $E\subset {\bRn}$ (with
 $0<mes(E)<\infty$) that satisfies the equation (\ref{4.2}) (or equivalently the equation (\ref{EE})),
and thus, up to a set of measure zero, $E$ is a ball. However, as mentioned above, the method in \cite{Lu2001} fails for
$n=2$. Now Theorem \ref{theorem4.2} shows that the uniqueness of the equation (\ref{4.2})
holds true for all $n\ge 2$. Thus, as a direct application of Theorem \ref{theorem4.2}, we also finish the classification of equilibrium states for all $n\ge 2$ for the Fermi-Dirac particle system as investigated in \cite{Lu2001}.

The proofs of Theorem \ref{theorem4.1}, Theorem \ref{theorem4.2}
are given in {\bf 4.3}. Before doing that
we first prove in {\bf 4.2} some elementary properties and a relevant basic  lemma (Lemma \ref{lemma5.5}).
\\

\noindent {\bf 4.2.}
\, For any $E\subset {\bRn}$, let
$E^c={\bRn}\setminus E$ be the complement of $E$ with respect to ${\bRn}$.
Recall that $E^{\circ}$ denotes the interior of $E$.
For any ${\bf n}\in {\bSn}$ we denote by $(E)_{\bf n}^{+}$ the ``upper half " of $E$ along the direction ${\bf n}$, i.e.
$$(E)_{\bf n}^{+}=\{x\in E\,\,|\,\, \la x,{\bf n}\ra\ge 0\}.$$

\begin{lemma}\label{lemma5.1}
Let $K\subset {\bRn}$ be a compact set with $K^{\circ}\neq\emptyset$ and suppose without loss of generality that $B_r:=B_r(0)\subset K$
is a largest closed ball inside $K$.
Then
$$(\p B_r)_{\bf n}^+\cap \p K\neq \emptyset\qquad \forall\, {\bf n}\in {\bSn}.$$
\end{lemma}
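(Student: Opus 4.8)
The plan is to reduce the lemma to the classical fact that the centre of a largest inscribed ball lies in the convex hull of the set where the ball touches $\p K$. First I would set $S:=\p B_r\cap\p K$ and record two elementary facts. Since $B_r=B_r(0)\subset K$ and $K$ is closed, every $z\in\p K$ has $|z|\ge r$ (if $|z|<r$ then $z\in B_r^{\circ}\subset K^{\circ}$, contradicting $z\in\p K$), so $S=\{z\in\p K\,:\,|z|=r\}$. Moreover $S\neq\emptyset$: if $\p B_r\cap\p K=\emptyset$ then $\p B_r\subset K^{\circ}$ (it already lies in $B_r\subset K$), hence the compact set $B_r$ lies in the open set $K^{\circ}$, so a slightly larger concentric ball is still in $K$, contradicting maximality of $r$. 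Now for fixed ${\bf n}\in\bSn$ we have $(\p B_r)_{\bf n}^{+}\subset\p B_r$, so
$$(\p B_r)_{\bf n}^{+}\cap\p K=(\p B_r)_{\bf n}^{+}\cap S=\{z\in S\,:\,\la z,{\bf n}\ra\ge 0\},$$
and therefore it suffices to show that for every ${\bf n}\in\bSn$ there is some $z\in S$ with $\la z,{\bf n}\ra\ge 0$ (equivalently, that the origin lies in the convex hull of $S$).

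Then I would argue by contradiction. Suppose for some ${\bf n}\in\bSn$ that $\la z,{\bf n}\ra<0$ for every $z\in S$; as $S$ is compact and nonempty, $\beta:=-\max_{z\in S}\la z,{\bf n}\ra>0$. For small $t>0$ I would shift the centre to $c_t:=t{\bf n}$, which lies in $B_r^{\circ}\subset K$ as soon as $t<r$, and estimate, for $z\in\p K$,
$$|z-c_t|^2=|z|^2-2t\la z,{\bf n}\ra+t^2.$$
Here I would split $\p K$ at the threshold $-\beta/2$, chosen strictly between $-\beta$ and $0$. On the part where $\la z,{\bf n}\ra<-\beta/2$ (which contains $S$) the cross term contributes $-2t\la z,{\bf n}\ra>t\beta$, so $|z-c_t|^2>r^2+t\beta$. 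On $U:=\{z\in\p K\,:\,\la z,{\bf n}\ra\ge-\beta/2\}$, which is compact and disjoint from $S$ (because $S\subset\{\la\cdot,{\bf n}\ra\le-\beta\}$), every $z$ satisfies $|z|>r$, hence $\min_{z\in U}|z|=r+\rho$ for some $\rho>0$; combining this with $\la z,{\bf n}\ra\le|z|\le M:=\max_{z\in K}|z|$ gives $|z-c_t|^2\ge(r+\rho)^2-2tM+t^2>r^2$ once $t<r\rho/M$. (If $U=\emptyset$, only the first alternative occurs.) In either case, for all sufficiently small $t>0$ there is a constant $\gamma>0$, independent of $z$, with $|z-c_t|^2\ge r^2+\gamma$ for all $z\in\p K$; that is, ${\rm dist}(c_t,\p K)>r$.

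To finish, I would turn this into a larger inscribed ball. Writing $D:={\rm dist}(c_t,\p K)>r$ and picking any $r'\in(r,D)$, I would note that if some $y$ with $|y-c_t|\le r'$ did not belong to $K$, then the segment $[c_t,y]$ joins a point of $K$ to a point of the open complement $K^{c}$ and hence meets $\p K$ at a point $w$ with $|w-c_t|\le|y-c_t|\le r'<D={\rm dist}(c_t,\p K)$, which is impossible. Thus $B_{r'}(c_t)\subset K$ is a closed ball of radius $r'>r$, contradicting the maximality of $B_r$. This contradiction establishes the displayed claim, and with it the lemma.

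The hard part will be the estimate on $U$ in the second paragraph: by hypothesis all of $S$ lies on one side of the hyperplane $\{\la\cdot,{\bf n}\ra=0\}$, but a boundary point with $\la z,{\bf n}\ra$ near $0$ or positive need not lie in $S$, and a priori it could sit arbitrarily close to $\p B_r$, in which case the cross term $-2t\la z,{\bf n}\ra$ would destroy the estimate. The device that saves the argument is the intermediate threshold $-\beta/2$, which forces $U\cap S=\emptyset$; compactness of $U$ then yields the uniform gap $\rho>0$ between $U$ and $\p B_r$, and it is precisely this gap that makes the small shift of the centre strictly increase the inradius.
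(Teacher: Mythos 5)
Your proposal is correct and follows essentially the same route as the paper: arguing by contradiction, you shift the maximal inscribed ball slightly in the direction ${\bf n}$ and use a compactness gap to show it can then be enlarged inside $K$, contradicting maximality. The only difference is bookkeeping — the paper measures ${\rm dist}\big((\p B_r)_{\bf n}^+,(K^{\circ})^c\big)$ and checks pointwise that $B_{r+\vep}(\dt{\bf n})\subset K^{\circ}$, whereas you bound ${\rm dist}(t{\bf n},\p K)$ from below by splitting $\p K$ at the threshold $\la z,{\bf n}\ra=-\beta/2$ and then convert the distance bound into the inclusion of a larger ball.
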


\begin{proof} Suppose to the contrary that
$(\p B_r)_{\bf n}^+\cap \p K=\emptyset$ for some ${\bf n}\in {\bSn}$.
Then $(\p B_r)_{\bf n}^+\subset K^{\circ}$ and so
${\rm dist}((\p B_r)_{\bf n}^+, (K^{\circ})^c)>0$. Let $0<\dt<\fr{1}{2}{\rm dist}((\p B_r)_{\bf n}^+, (K^{\circ})^c)$
and choose $\vep>0$ such that $2r\vep+\vep^2<\dt^2.$
We prove that
$B_{r+\vep}(\dt{\bf n})\subset K^{\circ}$ which will contradicts
the maximality of $B_r=B_{r}(0)$ in $K$.
Take any $x\in B_{r+\vep}(\dt{\bf n})$. If $|x|<r$, then $x\in K^{\circ}$
because $B_r^{\circ}\subset K^{\circ}$. Suppose that $|x|\ge r$. In this case we
claim that $\la x,{\bf n}\ra> 0$, for otherwise, $\la x,{\bf n}\ra\le 0$, then
$|x|^2
\le |x-\dt{\bf n}|^2-\dt^2
 \le (r+\vep)^2-\dt^2<r^2$  which contradicts
 $|x|\ge r$. So we must have $\la x,{\bf n}\ra >0$. From this and $|x|\ge r$ we deduce $\fr{x}{|x|} r\in (\p B_r)_{\bf n}^+$
and $|x-\fr{x}{|x|} r|
=|x|-r\le |x-\dt{\bf n}|+|\dt{\bf n}|-r
\le \vep+\dt<2\dt$.
Now if $x\not\in K^{\circ}$, i.e. if $x\in
(K^{\circ})^{c}$, then we obtain a contradiction:
$2\dt<{\rm dist}((\p B_r)_{\bf n}^+, (K^{\circ})^c)
\le |\fr{x}{|x|} r-x|<2\dt$. Thus we must
have $x\in K^{\circ}$. This proves that
$ B_{r+\vep}(\dt{\bf n})\subset K^{\circ}$.
\end{proof}
\vskip3mm

The rest of this section is concerned with measure and integration on the sphere. First we recall the
following formula:
for any nonnegative Borel measurable functions $f, g$ on
${\bSn}, [-1,1]$ respectively, and for any $\og\in {\bSn}$,
\bes&& \int_{{\bSn}}f(\sg){\rm d}\sg=\int_{0}^{\pi}\sin^{n-2}(\theta)
{\rm d}\theta\int_{{\mS}^{n-2}(\og)}f\big(\cos(\theta)\og+\sin(\theta)\tilde{\sg}\big){\rm d}\tilde{\sg}, \label{Int1}\\
&&
 \int_{{\bSn}}g(\la \og, \sg\ra){\rm d}\sg=|{\mS}^{n-2}|
\int_{0}^{\pi}\sin^{n-2}(\theta)g(\cos(\theta))
{\rm d}\theta \dnumber \label{Int10}\ees
where, for $n\ge 3$,
${\mS}^{n-2}(\og):=\{\tilde{\sg}\in {\bSn}\,\,|\,\,
\la \tilde{\sg},\og\ra=0\}$ and
${\rm d}\tilde{\sg}$ is the $n-2$-dimensional area element on the
${\mS}^{n-2}(\og)$, and for the case $n=2$ we define
${\mS}^{0}(\og):=\{\og_{\bot},\, -\og_{\bot}\}$ where
$\og_{\bot}\in {\mS}^1$ satisfies $\la \og_{\bot}, \og\ra=0$;
the integral on ${\mS}^{0}(\og)$ is defined by
$$\int_{{\mS}^{0}(\og)}f(\tilde{\sg}){\rm d}\tilde{\sg}=
f(\og_{\bot})+f(-\og_{\bot}).$$
In this Appendix we denote the areas of ${\bSn}, {\mS}^{n-2}(\og)$ by
$|{\bSn}|, |{\mS}^{n-2}(\og)|=|{\mS}^{n-2}|$, and for the case $n=2$, we
define $|{\mS}^{0}(\og)| =|{\mS}^{0}|=2.$

\noindent For any $a,b,t\in {\mR}$ we denote $a\wedge b=\min\{a,b\},\, a\vee b=\max\{a, b\},\, (t)_{+}= t\vee 0$.

\begin{lemma}\label{lemma5.4} Let $F, f$ be nonnegative Borel functions on $[-1,1]\times {\bRRn}$ and
${\bRn}$ respectively. Then
\bes&&\intt_{{\bRSn}}F\big(\la{\bf n},\sg\ra, v_*', v_*\big){\rm d}v_*{\rm d}\sg
\nonumber\\
&&=2^{n-1}\int_{{\bRn}}{\rm d}v_*
\int_{0}^{\pi/2}\fr{\sin^{n-2}(\theta)}{\cos^{2}(\theta)}
{\rm d}\theta\int_{{\mS}^{n-2}({\bf
n})}F\big(\cos(2\theta),\,v_*,\,
v_*+|v-v_*|\tan(\theta){\tilde \sg}\big){\rm d}{\tilde \sg},\qquad \dnumber
\label{5.3}\\ \nonumber\\
&& \fr{1}{r^n}
\int_{B_{r}(v)}
f(v_*){\rm d}v_*\nonumber
\\
&&\ge
\intt_{{\bSn}\times{\bSn}}\fr{\sqrt{1-\la \og,\sg\ra^2}}{2^{n+1}|{\mS}^{n-2}|}
\Big(f\big(v+\fr{r}{2}(\og+
\sg)\big)+f\big(v+\fr{r}{2}(\og-
\sg)\big)\Big)
{\rm d}\sg{\rm d}\og\dnumber \label{5.4}\ees
for all $v\in {\bRn}$ and all $r>0$, where ${\bf n}=(v-v_*)/|v-v_*|, v_*'=
\fr{1}{2}(v+v_*)-\fr{1}{2}|v-v_*|\sg$.
\end{lemma}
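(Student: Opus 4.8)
The plan is to treat the two assertions separately, since the first is an exact change-of-variables identity while the second is a genuine inequality. For the identity \eqref{5.3}, I would fix $v\in{\bRn}$ and, for each fixed $v_*$, parametrize $\sg\in{\bSn}$ relative to the axis ${\bf n}=(v-v_*)/|v-v_*|$ using the polar decomposition \eqref{Int1}, writing $\sg=\cos(\alpha){\bf n}+\sin(\alpha)\tilde\sg$ with $\alpha\in[0,\pi]$ and $\tilde\sg\in{\mS}^{n-2}({\bf n})$. Then $v_*'=\frac{v+v_*}{2}-\frac{|v-v_*|}{2}\sg$ traces out a sphere, and the key geometric fact is that $v_*'$ lies on the sphere of diameter $[v,v_*]$; writing $v_*'=v_*+|v-v_*|\tan(\theta)\tilde\sg$ corresponds to the substitution $\alpha=2\theta$, i.e. the standard ``$\theta$ vs. $2\theta$'' correspondence between the $\sg$-representation and the ${\bf n}'$-representation of a collision. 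Under $\alpha=2\theta$ one has $\langle{\bf n},\sg\rangle=\cos(2\theta)$, $\sin^{n-2}(\alpha)\,{\rm d}\alpha=2^{n-1}\sin^{n-2}(\theta)\cos^{n-2}(\theta)\,{\rm d}\theta$, and a further Jacobian factor of $\cos^{-n}(\theta)$ arises from passing from the sphere of fixed radius parametrized by $\tilde\sg\in{\mS}^{n-2}({\bf n})$ to the planar variable in the integrand (the radius of the relevant $(n-2)$-sphere scales like $|v-v_*|\tan(\theta)$ and the area element like $\tan^{n-2}(\theta)$, combining with $\cos^{n-2}(\theta)$ to give $\sin^{n-2}(\theta)$, leaving an extra $\cos^{-2}(\theta)$). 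Collecting the powers of $\cos(\theta)$ gives exactly the weight $\sin^{n-2}(\theta)/\cos^{2}(\theta)$ in \eqref{5.3}. I would verify the case $n=2$ separately using the convention $|{\mS}^0|=2$ and ${\mS}^0({\bf n})=\{{\bf n}_\perp,-{\bf n}_\perp\}$.

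For the inequality \eqref{5.4}, the idea is to rewrite the left side $\frac{1}{r^n}\int_{B_r(v)}f(v_*)\,{\rm d}v_*$ as a double spherical average and then discard a nonnegative remainder. Concretely, every point $w\in B_r(v)$ can be written as $w=v+\frac{r}{2}(\og+s\tau)$ for suitable $\og,\tau\in{\bSn}$ and $s\in[0,1]$; more efficiently, I would use the ``midpoint of a chord'' representation: a point of $B_r(v)$ is the midpoint $\frac{1}{2}(x+y)$ of a chord with endpoints $x=v+r\,\eta_1$, $y=v+r\,\eta_2$ on $\p B_r(v)$. Then $\frac{x+y}{2}=v+\frac{r}{2}(\eta_1+\eta_2)$, and setting $\og=\frac{\eta_1+\eta_2}{|\eta_1+\eta_2|}$, $\sg=\frac{\eta_1-\eta_2}{|\eta_1-\eta_2|}$ one has $\og\perp\sg$ is false in general, but $\eta_1+\eta_2\perp\eta_1-\eta_2$ since $|\eta_1|=|\eta_2|$, so actually $\langle\og,\sg\rangle=0$ — hmm, that would force the factor $\sqrt{1-\langle\og,\sg\rangle^2}$ to be $1$. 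Since the stated bound retains that factor, I instead expect the correct route is: parametrize $v_*\in B_r(v)$ by $v_*=v+\frac{r}{2}(\og+\sg)$ with $\og,\sg\in{\bSn}$ ranging over all pairs (a surjective but highly non-injective map onto $B_r(v)$, since $|\og+\sg|\le 2$), compute the Jacobian of $(\og,\sg)\mapsto v+\frac r2(\og+\sg)$ restricted appropriately, and show the pushforward of the normalized product measure on $\bSn\times\bSn$ under this map has density bounded below by a multiple of $\sqrt{1-\langle\og,\sg\rangle^2}$ relative to Lebesgue measure on $B_r(v)$. Symmetrizing in $\sg\mapsto-\sg$ produces the sum $f(v+\frac r2(\og+\sg))+f(v+\frac r2(\og-\sg))$, and dropping the contribution where the image leaves a smaller ball (or where the covering multiplicity exceeds one) gives ``$\ge$'' rather than ``$=$''.

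The main obstacle is the Jacobian computation in \eqref{5.4}: the map $(\og,\sg)\mapsto\frac{r}{2}(\og+\sg)$ from the $2(n-1)$-dimensional manifold $\bSn\times\bSn$ to the $n$-dimensional ball is not a diffeomorphism, so one must either (i) use the coarea formula and bound the relevant co-area factor from below, or (ii) slice by the value of $|\og+\sg|$ and integrate out the fibers explicitly, tracking the factor $\sqrt{1-\langle\og,\sg\rangle^2}=|\og\wedge\sg|$ which is exactly the area-distortion of the $2$-plane spanned by $\og,\sg$. I expect the cleanest argument fixes $u=\frac{1}{2}(\og+\sg)$ and $w=\frac12(\og-\sg)$, notes $|u|^2+|w|^2=1$ and $\langle u,w\rangle=0$, so $(u,w)$ ranges over pairs of orthogonal vectors with $|u|^2+|w|^2=1$; the constraint manifold fibers over $u\in B_1(0)$ with fiber an $(n-2)$-sphere of radius $\sqrt{1-|u|^2}$, and the induced measure on the $u$-variable, after normalization by $|\bSn|^2$, works out to the claimed density. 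Everything else — the reduction \eqref{5.3}, the symmetrization, and the final ``discard nonnegative terms'' step — is routine once this measure-decomposition lemma on $\bSn\times\bSn$ is in hand.
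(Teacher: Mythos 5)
Your plan for the identity (\ref{5.3}) is in substance the paper's own proof: polar coordinates for $v_*$ about $v$, the half-angle substitution $\alpha=2\theta$, and a radial Jacobian, ending with the weight $2^{n-1}\sin^{n-2}(\theta)/\cos^{2}(\theta)$. Two bookkeeping points in your sketch should be repaired, though. First, for fixed $v_*$ the equation $v_*'=v_*+|v-v_*|\tan(\theta)\tilde\sg$ is false (the left side is at distance $|v-v_*|\sin\theta$ from $v_*$, the right side at distance $|v-v_*|\tan\theta$); what (\ref{5.3}) really asserts is a relabelling in which the \emph{new} Lebesgue variable is the old $v_*'$ and the old $v_*$ reappears as $v_*+|v-v_*|\tan(\theta)\tilde\sg$. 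Second, the factor $\cos^{-n}(\theta)$ is not an artifact of the $(n-2)$-sphere's radius; it is exactly the radial Jacobian of the rescaling $r\to r/\cos(\theta)$ needed to reassemble Lebesgue measure in the new variable, which is how the paper organizes the computation ($v_*=v-r\og$, $\og$ polar about $\sg$, $\theta\to 2\theta$, $r\to r/\cos\theta$, then re-expand $\sg$ about the new ${\bf n}$). With these corrections your outline closes.

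For (\ref{5.4}) you take a genuinely different route from the paper, and it works. The paper proves the reflection identity (\ref{Int2}), weakens it to (\ref{Int3}), chooses $h(t)=t/(\sqrt{1-t^2})^{n-3}$ so that the $\og$-integration reproduces the ball integral exactly, and then uses the elementary bound $\sin^{n-2}\wedge\cos^{n-2}\ge \sin\cos\cdot(\sin\cos)^{n-3}$ to produce the weight $\sqrt{1-\la\og,\sg\ra^2}$. Your proposal instead pushes the weighted product measure forward under $(\og,\sg)\mapsto \fr{1}{2}(\og+\sg)$ via the orthogonal splitting $u=\fr12(\og+\sg)$, $w=\fr12(\og-\sg)$; carrying out the fiber computation (for instance by applying (\ref{Int1}) twice, once about $\og$ and once about $e=u/|u|$) gives the exact identity
\[
\intt_{{\bSn}\times{\bSn}}\sqrt{1-\la\og,\sg\ra^2}\,F\Big(\fr{\og+\sg}{2}\Big){\rm d}\sg{\rm d}\og
=2^{n}|{\mS}^{n-2}|\int_{|u|\le 1}\big(1-|u|^2\big)^{\fr{n-2}{2}}F(u)\,{\rm d}u ,
\]
and with $F(u)=f(v+ru)$, the symmetrization $\sg\to-\sg$, and $(1-|u|^2)^{(n-2)/2}\le 1$ this yields (\ref{5.4}) (indeed with equality when $n=2$), so your route bypasses (\ref{Int2})--(\ref{Int3}) and is slightly sharper. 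One sentence of yours must be fixed, however: you ask for the pushforward density to be ``bounded \emph{below} by a multiple of $\sqrt{1-\la\og,\sg\ra^2}$,'' which is both ill-typed (the density is a function of the point in the ball, and the weight is constant on each fiber, equal to $2|u|\sqrt{1-|u|^2}$) and in the wrong direction: since the sphere-pair integral sits on the right of (\ref{5.4}), what is needed is an \emph{upper} bound on the density of the weighted pushforward, which is precisely what the displayed identity provides; your final $(u,w)$-fibration paragraph, not that sentence, is the correct plan, and the fiber-measure computation is the one step you still have to do in detail.
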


\begin{proof}
Making change of variables $v_*=v-z, z=r\og\,\Longrightarrow\, {\rm d}v_*=
r^{n-1}{\rm d}r{\rm d}\og$, then letting $\og=\cos(\theta)\sg+\sin(\theta)\tilde{\sg},\tilde{\sg}
\in {\mS}^{n-2}(\sg)\,\Longrightarrow\, {\rm d}\og=\sin^{n-2}(\theta){\rm d}\theta {\rm d}
\tilde{\sg}$,  and then changing $\theta\to 2\theta$,
$r\to\fr{r}{\cos(\theta)}$, we compute
\beas&& \intt_{{\bRSn}}F\big(\la{\bf n},\sg\ra, v_*', v_*\big){\rm d}v_*{\rm d}\sg
=2^{n-1}\int_{{\bSn}}{\rm d}\sg\int_{0}^{\infty}r^{n-1}{\rm d}r \int_{0}^{\pi}{\bf 1}_{\{\cos(\theta)>0\}}\fr{\sin^{n-2}(\theta)}{\cos^2(\theta)}{\rm d}\theta
\\
&&\times\int_{{\mS}^{n-2}(\sg)} F\Big(\cos(2\theta),
v-r\big(\cos(\theta)\sg+\sin(\theta)\tilde{\sg}\big),
v+\fr{r}{\cos(\theta)}\sg -2r\big(\cos(\theta)\sg+\sin(\theta)\tilde{\sg}\big)\Big){\rm d}\tilde{\sg}
\\
&&=2^{n-1}\int_{{\bRn}}{\rm d}v_*\int_{{\bSn}}
{\bf 1}_{\{\la {\bf n},\sg\ra>0\}}\fr{1}{\la{\bf n},\sg\ra^2} F\Big(2\la
{\bf n},\sg\ra^2-1, v_*,
v+\fr{|v-v_*|}{\la{\bf n},\sg\ra}\sg -2(v-v_*)\Big){\rm d}\sg
\\
&&=2^{n-1}\int_{{\bRn}}{\rm d}v_*\int_{0}^{\pi}{\bf 1}_{\{\cos(\theta)>0\}}\fr{\sin^{n-2}(\theta)}{\cos^2(\theta)}
{\rm d}\theta\int_{{\mS}^{n-2}({\bf n})}F\big(\cos(2\theta), v_*,
v_*+|v-v_*|\tan(\theta)\tilde{\sg}\big){\rm d}\tilde{\sg}
\eeas
which is equal to the right hand side of (\ref{5.3}).

To prove (\ref{5.4}), we first prove that
 for any nonnegative Borel measurable functions $h, g$ on $(0,1)$
 and ${\bSn}$
 respectively, it holds
\bes&& \int_{{\bSn}}|\la \og,\sg\ra|^{n-2}h(|\la \og,\sg\ra|)
g\big(\og-2\la\og,\sg\ra\sg\big)
{\rm d}\sg\nonumber \\
&&=\fr{1}{2^{n-1}}\int_{{\bSn}}\Big(h(\sin(\theta/2))g(\sg)+
h(\cos(\theta/2))g(-\sg)\Big)\Big|_{\theta=\arccos(\la \og,\sg\ra)}
{\rm d}\sg.
\label{Int2}\ees
To do this we observe that the mapping $\sg\mapsto
 \la \og,\sg\ra\sg$ is even and so using the formula (\ref{Int1}) we compute
\beas&& \int_{{\bSn}}|\la \og,\sg\ra|^{n-2}h(|\la \og,\sg\ra|)
g\big(\og-2\la\og,\sg\ra\sg\big)
{\rm d}\sg=2\int_{{\bSn}}{\bf 1}_{\{\la\og,\sg\ra>0\}}\{\cdots\}{\rm d}\sg\\
&&=2\int_{0}^{\pi/2}\sin^{n-2}(\theta)\cos^{n-2}(\theta)h(\cos(\theta)){\rm d}\theta
\int_{{\mS}^{n-2}(\og)}g\big(-\cos(2\theta)\og-\sin(2\theta)\tilde{\sg}\big)\big)
{\rm d}\tilde{\sg}.
\eeas
Then changing variable $\theta\to \theta/2$ and using the identity
$\int_{{\mS}^{n-1}}\vp(-\sg){\rm d}\sg
=\int_{{\mS}^{n-1}}\vp(\sg){\rm d}\sg$ it is
easily shown that (\ref{Int2}) holds true.  From (\ref{Int2})
we obtain
\bes&& \int_{{\bSn}}|\la \og,\sg\ra|^{n-2}h(|\la \og,\sg\ra|)
g\big(\og-2\la\og,\sg\ra\sg\big)
{\rm d}\sg\nonumber \\
&&\ge \fr{1}{2^{n-1}}\int_{{\bSn}}\big[h(\sin(\theta/2))\wedge
h(\cos(\theta/2))\big]
\big(
g(\sg)+g(-\sg)\big)\Big|_{\theta=\arccos(\la \og,\sg\ra)}
{\rm d}\sg.
\label{Int3}\ees
Now we choose $h(t)=t/(\sqrt{1-t^2})^{(n-3)}=t^{n-2}/(t\sqrt{1-t^2})^{n-3}, t\in (0,1)$, and  write
$$v+r\la \og, \sg\ra\sg=
v+\fr{r}{2}\big(\og-(\og-2\la \og, \sg\ra\sg
)\big).$$
Then we deduce from (\ref{Int3}) and $\cos(\theta)=\la \og,\sg\ra,\theta\in [0,\pi],$ that
\beas&&
\int_{{\bSn}}\fr{|\la \og,\sg\ra|^{n-1}}{\big(\sqrt{1-\la \og,\sg\ra^2\,}
\big)^{n-3}}
f\big(v+r\la \og, \sg\ra\sg\big){\rm d}\sg\\
&&
\ge
\int_{{\bSn}}
\fr{\sin^{n-2}(\theta/2)\wedge \cos^{n-2}(\theta/2)}{2^{n-1}\sin^{n-3}(\theta/2)\cos^{n-3}(\theta/2)}
\Big(f\big(v+\fr{r}{2}(\og+\sg)\big)+f\big(v+\fr{r}{2}(\og-\sg)\big)
\Big)
{\rm d}\sg\\
&&\ge \int_{{\bSn}}\fr{\sin(\theta)}{2^n}
\Big(f\big(v+\fr{r}{2}(\og+\sg)\big)+f\big(v+\fr{r}{2}(\og-\sg)\big)
\Big)
{\rm d}\sg.\eeas
On the other hand, taking integration for $\og\in {\bSn}$
and noting again that $\sg\mapsto \la \og, \sg\ra\sg$ is even, we compute using
the formula (\ref{Int1}) that
\beas&& \intt_{{\bSn}\times{\bSn}}
\fr{|\la \og,\sg\ra|^{n-1}}{\big(\sqrt{1-\la \og,\sg\ra^2\,}
\big)^{n-3}}
f\big(v+r\la \og, \sg\ra\sg\big){\rm d}\sg{\rm d}\og=
2\intt_{{\bSn}\times {\bSn}}{\bf 1}_{\{\la\og,\sg\ra>0\}}\{\cdots\}{\rm d}\sg{\rm d}\og\\
&&=2|{\mS}^{n-2}|
\int_{{\bSn}}{\rm d}\sg\int_{0}^{\pi/2}\sin(\theta)
\cos^{n-1}(\theta)
f\big(v+r\cos(\theta)\sg\big){\rm d}\theta
=\fr{2|{\mS}^{n-2}|}{r^n}\int_{B_{r}(v)}
f(v_*){\rm d}v_*.
\eeas
This gives (\ref{5.4}).
\end{proof}
\vskip2mm

As mentioned above, in this Appendix we use $|S|$ to denote the
$n-1$-dimensional Hausdorff measure of a Hausdorff measurable subset
$S\subset {\bSn}$. When dealing with measure and measurability, two things may be reminded:

(1)
If a Lebesgue measurable set $Z\subset {\mR}^N$, a Hausdorff measurable set $S\subset {\bSn}$, etc.,
satisfy $mes(Z)=0$, $|S|=0$, etc., then they are sometimes simply called
{\it null } sets.

(2)
Since for instance the Lebesgue measure is regular, a Lebesgue measurable set (with finite measure) under consideration can be replaced by one of its Borel subset with the same measure
when dealing with measurability on lower dimensional sets. In other words, for notational convenience
we will do not distinguish between two measurable sets that differ by a null set.

For any Borel set $E\subset {\bRn}$ and $v,v_*\in {\bRn}$, we define a Borel subset of ${\bSn}$ by
$${\mS}^{n-1}_{v,v_*}(E)=\Big\{\sg\in {\bSn}\,\,\big|\,\,\,
\fr{v+v_*}{2}+\fr{|v-v_*|}{2}\sg\in E\,\,\,\,{\rm or}\,\,\,\, \fr{v+v_*}{2}-\fr{|v-v_*|}{2}\sg\in E\Big\}.$$
For the set $E$ in Theorem \ref{theorem4.2}, after a modification on a null set we may assume that
$E$ is a Borel set, so that
the condition (\ref{4.2}) is equivalent to
$|{\mS}^{n-1}_{v,v_*}(E)|=|{\mS}^{n-1}|$ for almost all $(v,v_*)\in E\times E$. The following lemma deals with a general case; it also shortens the proof of Theorem \ref{theorem4.2}.

\begin{lemma}\label{lemma5.5}
Let $E\subset {\bRn}$ be a Borel set with $0<mes(E)<\infty$ satisfying for some
constant $0<\ld\le 1$
\be |{\mS}^{n-1}_{v,v_*}(E)|\ge \ld|{\mS}^{n-1}|\qquad \,\,\,\,{\rm a.e.}
\quad (v,v_*)\in E\times E.\label{5.7}\ee
Then there exists a subset $Z_0\subset E$ with $mes(Z_0)=0$
such that $E\setminus Z_0$ is a bounded set and the compact set $K=\overline{E\setminus Z_0}$ satisfies
$mes(K^{\circ}\setminus E)=0$ and \be |{\mS}^{n-1}_{v,v_*}(K)|\ge \ld|{\mS}^{n-1}|\qquad \forall\, (v,v_*)\in K\times K.\label{5.8}\ee
Furthermore if $mes(\p K)=0$, then
 $mes\big((K\setminus E)\cup (E\setminus K)\big)=0$.
\end{lemma}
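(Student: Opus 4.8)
I would prove this in four steps, the real work being the essential boundedness of $E$ (Step 1) and the identity $mes(K^\circ\setminus E)=0$ (Step 3). Throughout, replace $E$ by its set $E_1$ of Lebesgue density-one points (harmless: $mes(E\triangle E_1)=0$, and (\ref{5.7}) still holds for a.e.\ $(v,v_*)\in E_1\times E_1$), and write $\widetilde E=\overline{E_1}=\{x\in{\bRn}:mes(B_\dt(x)\cap E)>0\ \forall\,\dt>0\}$ for the measure-theoretic closure of $E$.

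\emph{Step 1 (essential boundedness).} First I would show there is $R_0<\infty$ with $mes(E\setminus B_{R_0}(0))=0$, after translating so that $0$ is a density point of $E$ at which (\ref{5.7}) holds for a.e.\ $v_*$. For a.e.\ $v_*\in E$, writing ${\mS}^{n-1}_{0,v_*}(E)=\Sg_0(v_*)\cup(-\Sg_0(v_*))$ with $\Sg_0(v_*)=\{\sg\in{\bSn}:\fr{|v_*|}{2}(v_*/|v_*|+\sg)\in E\}$, (\ref{5.7}) gives $|\Sg_0(v_*)|\ge\fr{\ld}{2}|{\mS}^{n-1}|$; thus $E$ fills at least a $\fr{\ld}{2}$-fraction of the sphere $S_{0,v_*}$ through $0$ and $v_*$. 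The key point is that the part of $S_{0,v_*}$ inside a fixed ball $B_{K_*}(0)$ is a spherical cap of relative measure $O\big((K_*/|v_*|)^{n-1}\big)$ — a point $\fr{|v_*|}{2}(v_*/|v_*|+\sg)$ of $S_{0,v_*}$ has norm $|v_*|\cos(\beta/2)$, $\beta$ the angle between $v_*/|v_*|$ and $\sg$ — so for $|v_*|$ large $E\setminus B_{K_*}(0)$ still fills a $\fr{\ld}{4}$-fraction of $S_{0,v_*}$. Integrating this over $\{v_*\in E:|v_*|\ \text{large}\}$ and rewriting both sides through the change-of-variables identity (\ref{5.3}) of Lemma \ref{lemma5.4} (with $F$ an indicator) leads to an estimate bounding $mes\big(E\cap\{|v_*|\ge L\}\big)$ above by a constant times $\int_{\{|v_*|\ge K_*\}}\fr{{\bf 1}_E(v_*)}{|v_*|^{n-1}}|E\cap H_{v_*}|\,{\rm d}v_*$, $H_{v_*}$ the hyperplane through $v_*$ normal to $v_*$; a second reduction of this right-hand side via (\ref{5.3}), together with $mes(E)<\infty$ and $|v_*|^{-(n-1)}\le K_*^{-(n-1)}$ on the range, sends it to $0$ as $K_*\to\infty$, yielding $R_0$. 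I expect this to be the main obstacle; it is the step that \cite{Lu2001} carries out for $n=3$, and the estimates are dimension-independent up to the values of the constants.

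\emph{Step 2 (construction of $Z_0$, $K$, and the pointwise bound (\ref{5.8})).} Put $Z_0:=(E\setminus E_1)\cup(E_1\setminus B_{R_0}(0))$, a null subset of $E$, so $E\setminus Z_0=E_1\cap B_{R_0}(0)$ is bounded, $K:=\overline{E\setminus Z_0}$ is compact, and $K\subseteq\widetilde E$. Next I would note that $|{\mS}^{n-1}_{v,v_*}(Z_0)|=0$ for a.e.\ $(v,v_*)$: indeed $|{\mS}^{n-1}_{v,v_*}(Z_0)|\le 2\int_{{\bSn}}{\bf 1}_{Z_0}(v')\,{\rm d}\sg$, and by Lemma \ref{lemma2.4} (with $b\equiv1$, $\Psi\equiv1$, $f={\bf 1}_{Z_0}$) the integral $\inttt_{{\bRn}\times{\bRn}\times{\bSn}}{\bf 1}_{Z_0}(v')\,{\rm d}\sg{\rm d}v_*{\rm d}v$ equals a constant times $mes(Z_0)=0$. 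Combined with (\ref{5.7}) and $E\setminus Z_0\subseteq K$ this gives $|{\mS}^{n-1}_{v,v_*}(K)|\ge\ld|{\mS}^{n-1}|$ for a.e.\ $(v,v_*)\in E_1\times E_1$. To upgrade this to \emph{every} $(v,v_*)\in K\times K$ (the case $v=v_*$ is trivial), I would choose ``good'' pairs $(v_k,v_{*,k})\to(v,v_*)$ inside $E_1\times E_1$ — possible since $v,v_*\in\widetilde E$ and the good pairs have full product measure — set $A_k={\mS}^{n-1}_{v_k,v_{*,k}}(K)$, $A={\mS}^{n-1}_{v,v_*}(K)$, observe that ${\bf 1}_K$ is upper semicontinuous ($K$ closed) so $\limsup_k{\bf 1}_{A_k}(\sg)\le{\bf 1}_{A}(\sg)$ for each $\sg$, and apply the reverse Fatou lemma: $\ld|{\mS}^{n-1}|\le\limsup_k|A_k|\le|A|$. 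This is (\ref{5.8}).

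\emph{Step 3 ($mes(K^\circ\setminus E)=0$) and conclusion.} By Step 2 and $mes(E\setminus E_1)=0$, every point of $K^\circ$ lies in $\widetilde E$, so by Lebesgue's density theorem it suffices to rule out density-zero points of $E$ inside $K^\circ$. The plan is: if the set $N$ of such points had $mes(N)>0$, then — $N$ being a subset of $E^c$ lying ``deep'' inside $K$ — a Fubini/co-area computation (again via Lemma \ref{lemma2.4} or the identity (\ref{5.3})) would produce a positive-measure set of pairs $(v,v_*)\in E\times E$ for which $\{\sg:v'\in N\ \text{and}\ v_*'\in N\}$ has measure $>(1-\ld)|{\mS}^{n-1}|$, forcing $|{\mS}^{n-1}_{v,v_*}(E)|<\ld|{\mS}^{n-1}|$ and contradicting (\ref{5.7}); hence $mes(N)=0$. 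This estimate is, after Step 1, the other part requiring genuine work, and I would model it on \cite{Lu2001}. Finally, if $mes(\p K)=0$ then $mes(K\setminus E)\le mes(K^\circ\setminus E)+mes(\p K)=0$ and $mes(E\setminus K)\le mes(E\cap Z_0)=0$, so $mes\big((K\setminus E)\cup(E\setminus K)\big)=0$, completing the proof.
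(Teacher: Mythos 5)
Your Steps 2 and 4 are fine and essentially coincide with the paper's proof (null-set stability of (\ref{5.7}), construction of $K$ from density points plus a bounding radius, the upper-semicontinuity/Fatou upgrade of the a.e.\ bound to all of $K\times K$, and the final deduction when $mes(\p K)=0$). The two steps you yourself flag as the real work, however, both have genuine gaps. In Step 1 your scheme cannot produce a finite $R_0$: the cap estimate behind ``$E\setminus B_{K_*}(0)$ still fills a $\ld/4$-fraction of $S_{0,v_*}$'' is only valid for $|v_*|\ge L$ with $L\ge c_n\ld^{-1/(n-1)}K_*$, so $L$ is tied to $K_*$. When you send $K_*\to\infty$ to make the right-hand side small, the left-hand side is $mes(E\cap\{|x|\ge L(K_*)\})$ with $L(K_*)\to\infty$ as well, and the resulting statement degenerates to $\lim_{L\to\infty}mes(E\cap\{|x|\ge L\})=0$, which holds for every set of finite measure and gives no finite $R_0$ with $mes(E\setminus B_{R_0}(0))=0$; you can neither fix $L$ while $K_*\to\infty$ nor get smallness with $K_*$ fixed. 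The paper obtains boundedness precisely by breaking this balance: it inserts into (\ref{5.3}) the weights $1-\la{\bf n},\sg\ra^2$ and $(|v-v_*|/|v_*|)^{1/2}$, so that the upper bound $C^*_n\,mes(E)$ in (\ref{5.16}) is uniform in $v$ while the lower bound, via (\ref{5.14}), grows like $|v|^{1/2}$; that mismatch forces $E\setminus Z_0$ to be bounded. Your unweighted indicator estimate has both sides scaling identically and cannot yield this.

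Step 3 is not yet an argument but a restatement of the goal, and the specific mechanism you name cannot work. Producing a positive-measure set of pairs $(v,v_*)\in E\times E$ with $|\{\sg:\,v'\in N,\ v_*'\in N\}|>(1-\ld)|{\bSn}|$ is exactly the assertion that (\ref{5.7}) fails, and you give no route from $mes(N)>0$ to such pairs. Worse, the identity $|v'-c|^2+|v_*'-c|^2=|v-c|^2+|v_*-c|^2$ shows that if $N$ were (up to a null set) a ball $B_{\epsilon}(c)\subset K^{\circ}$, then for a.e.\ pair $(v,v_*)\in E\times E$ (both points lie outside $N$, since $E\cap N$ is null) there is no $\sg$ at all with both $v',v_*'\in N$: your target set would be empty rather than of measure $>(1-\ld)|{\bSn}|$. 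So no contradiction of the form you describe can be extracted from the bad mass $N$ alone; a quantitative input of a different kind is needed. The paper supplies it by proving the uniform interior density bound (\ref{5.17}), $mes(E\cap B_R(v_0))\ge\alpha\,mes(B_R(v_0))$ for every ball $B_R(v_0)\subset K^{\circ}$, obtained from the spherical-average inequality (\ref{5.4}) combined with the H\"older bound (\ref{5.13}), two applications of the a.e.\ hypothesis along spheres, and a convolution argument showing the relevant set of pairs has positive measure ($J>0$); $mes(K^{\circ}\setminus E)=0$ then follows at once from Lebesgue density. Until Steps 1 and 3 are reworked along these (or comparably quantitative) lines, the proposal does not prove the lemma.
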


{\bf Remarks.} (1) It is not clear whether (\ref{5.8}) already
 implies $mes\big((K\setminus E)\cup (E\setminus K)\big)=0$.

(2) There are many compact but non-convex sets $K$ that
satisfy (\ref{5.8}) for $0<\ld<1$ with $\ld\approx 1$. A typical example is the
annulus
$K_{\vep}=\{x\in {\bRn}\,|\, \vep\le |x|\le 1\}, 0<\vep<1.$
It is easily proven that $K_{\vep}$ satisfies (\ref{5.8}) with
$$\ld\ge  \int_{0}^{\fr{1-\vep^2}{1+\vep^2}} (1-t^2)^{\fr{n-3}{2}}{\rm d}t\Big/\int_{0}^{1} (1-t^2)^{\fr{n-3}{2}}{\rm d}t\to 1\quad (\vep\to 0).$$

{\bf Proof of Lemma} \ref{lemma5.5}.
\, {\it Step1}. We first prove that for any Borel subset $\wt{E}\subset E$ satisfying
$mes(E\setminus \wt{E})=0$, the set $\wt{E}$ also satisfies condition
(\ref{5.7}).
Using the notation (\ref{I}) we have
\be {\bf 1}_{{\mS}^{n-1}_{v,v_*}(E)}(\sg)={\bf 1}_{E}(v')\vee {\bf 1}_{E}(v_*')
\qquad \forall\, (v,v_*,\sg)\in {\bRn}\times {\bRn}\times {\bSn}.\label{5.9}\ee
So $|{\mS}^{n-1}_{v,v_*}(E)|=\int_{{\bSn}}\big({\bf 1}_{E}(v')\vee {\bf 1}_{E}(v_*')\big) {\rm d}\sg$ and so the condition
(\ref{5.7}) is equivalent to
\be \intt_{E\times E}\left(\ld |{\bSn}|-\int_{{\bSn}}\big({\bf 1}_{E}(v')\vee {\bf 1}_{E}(v_*')\big) {\rm d}\sg\right)_{+}
{\rm d}v{\rm d}v_*=0.\label{5.10}\ee
Take any Borel subset $\wt{E}\subset E$ satisfying
$mes(E\setminus \wt{E})=0$. Let $Z=E\setminus \wt{E}$, then
${\bf 1}_{E}(v')\le {\bf 1}_{\wt{E}}(v')+{\bf 1}_{Z}(v')$, etc. so that
${\bf 1}_{E}(v')\vee {\bf 1}_{E}(v_*')\le {\bf 1}_{\wt{E}}(v')\vee {\bf 1}_{\wt{E}}(v_*')
+{\bf 1}_{Z}(v')\vee {\bf 1}_{Z}(v_*')$
and so
$\ld|{\bSn}|-\int_{{\bSn}}\big({\bf 1}_{\wt{E}}(v')\vee {\bf 1}_{\wt{E}}(v_*')\big) {\rm d}\sg
\le \ld|{\bSn}|-\int_{{\bSn}}\big({\bf 1}_{E}(v')\vee {\bf 1}_{E}(v_*')\big) {\rm d}\sg
+\int_{{\bSn}}\big({\bf 1}_{Z}(v')\vee {\bf 1}_{Z}(v_*')\big) {\rm d}\sg$ and so
\beas&&\left(\ld |{\bSn}|-\int_{{\bSn}}\big({\bf 1}_{\wt{E}}(v')\vee {\bf 1}_{\wt{E}}(v_*')\big) {\rm d}\sg\right)_{+}
\le\left(\ld |{\bSn}|-\int_{{\bSn}}\big({\bf 1}_{E}(v')\vee {\bf 1}_{E}(v_*')\big) {\rm d}\sg\right)_{+}
 \\
 &&+\int_{{\bSn}}\big({\bf 1}_{Z}(v')\vee {\bf 1}_{Z}(v_*')\big) {\rm d}\sg.\eeas
Then taking integration  over $(v,v_*)\in \wt{E}\times \wt{E}\subset E\times E$ and notice that
$$\inttt_{{\bRRSn}}\big({\bf 1}_{Z}(v')\vee {\bf 1}_{Z}(v_*')\big)
{\rm d}v{\rm d}v_* {\rm d}\sg=
|{\bSn}|\intt_{{\bRRn}}\big({\bf 1}_{Z}(v)\vee {\bf 1}_{Z}(v_*)\big)
{\rm d}v{\rm d}v_*=0$$
we see that $\wt{E}$ satisfies (\ref{5.10}), or equivalently,
$\wt{E}$ satisfies (\ref{5.7}).

Let $E_1=\{v\in E\,|\,\lim\limits_{r\to 0^+}mes(E\cap B_{r}(v))/mes(B_{r}(v))=1\}$
be the set of all density points of $E$. By Lebesgue's density theorem we know that
$mes(E\setminus E_1)=0$. By the regularity of the Lebesgue measure and the property just proved above
we may assume without loss of generality that $E$ is a Borel set and every point in
$E$ is a density point of $E$.
Let
\be {\cal Z}_{E}=\big\{(v,v_*)\in E\times E\,\,\,\big|\,\,\,|{\mS}^{n-1}_{v,v_*}(E)|< \ld |{\bSn}|\big\}.
\label{5.12}\ee
By assumption (\ref{5.7}),
${\cal Z}_{E}$ is a null set in ${\mR}^{2n}$.
We next prove the following inequalities:
\be
\int_{{\bSn}}\big(1-\la \og,\sg\ra^2\big)
{\bf 1}_{{\mS}^{n-1}_{v,v_*}(E)}(\sg){\rm d}\sg\ge C_{n}|{\mS}^{n-1}_{v,v_*}(E)|^{5}
\qquad \forall\,(v,v_*,\og)\in {\bRRSn}\qquad  \label{5.13}\ee
\be\int_{{\bSn}}\big(1-\la \og,\sg\ra^2\big)
{\bf 1}_{{\mS}^{n-1}_{v,v_*}(E)}(\sg){\rm d}\sg\ge C_{n,\ld}
\qquad \forall\, (v,v_*)\in (E\times E)\setminus {\cal Z}_{E},\,\,\forall\, \og\in {\bSn}\quad\qquad \label{5.14}\ee
where $C_{n}=\big(|{\mS}^{n-2}|
\int_{0}^{\pi}\sin^{n-5/2}(\theta){\rm d}\theta\big)^{-4},
C_{n,\ld}=C_{n}(\ld|{\bSn}|)^{5}$.
In fact for any $(v,v_*,\og)\in {\bRRSn}$, using
H\"{o}lder inequality  we have
\beas&&|{\mS}^{n-1}_{v,v_*}(E)|=
\int_{{\bSn}}{\bf 1}_{{\mS}^{n-1}_{v,v_*}(E)}(\sg){\rm d}\sg
\\
&&\le\left(
\int_{{\bSn}}\big(1-\la \og,\sg\ra^2\big)\,{\bf 1}_{{\mS}^{n-1}_{v,v_*}(E)}(\sg)
{\rm d}\sg\right)^{1/5}
\left(
\int_{{\bSn}}\big(1-\la \og,\sg\ra^2\big)^{-1/4}{\rm d}\sg\right)^{4/5}\eeas
which gives (\ref{5.13}). The inequality (\ref{5.14}) follows from
(\ref{5.13}) and the definition of ${\cal Z}_{E}$.

{\it Step2.} In this step we prove that there is a null set $Z_0\subset E$ such that
$E\setminus Z_0$ is a bounded Borel set and the compact set $K=\overline{E\setminus Z_0}$ satisfies
(\ref{5.8}).

Since the set
${\cal Z}_{E}$ defined in (\ref{5.12})
has measure zero, it follows from Fubini theorem and the regularity of the Lebesgue measure that
there is a null set $Z_0\subset E$, such that $E\setminus Z_0$ is a Borel set and
for every $v\in E\setminus Z_0$ there is a null set $Z_v\subset E$ such that $(v,v_*)\in (E\times E)\setminus {\cal Z}_{E}$ for all $v_*\in E\setminus Z_v$.
Given any $v\in E\setminus Z_0$. We now use
Lemma \ref{lemma5.4} to find a relation between $mes(E)$ and $|v|$. In the
integral identity (\ref{5.3}) let us choose
$F(\la{\bf n},\sg\ra, v_*', v_*)=
(1-\la{\bf n},\sg\ra^2){\bf 1}_{E}(v_*')(\fr{|v-v_*|}{|v_*|})^{1/2}$. Then for all $v\in E\setminus Z_0$
\beas&&\intt_{{\bRSn}}(1-\la{\bf n},\sg\ra^2){\bf 1}_{E}(v_*')\Big(\fr{|v-v_*|}{|v_*|}\Big)^{1/2}
{\rm d}v_*{\rm d}\sg\\
&&=2^{n+1}\int_{{\bRn}}{\bf 1}_{E}(v_*){\rm d}v_*
\int_{0}^{\pi/2}\sin^{n}(\theta){\rm d}\theta\int_{{\mS}^{n-2}({\bf
n})}\Big(
\fr{|{\bf n}-\tan(\theta){\tilde \sg}|}{\big|
|v-v_*|^{-1}v_*+\tan(\theta){\tilde \sg}\big|}\Big)^{1/2}{\rm d}{\tilde \sg}.
\\
&&\le 2^{n+1}|{\mS}^{n-2}|\int_{E}{\rm d}v_*
\int_{0}^{\pi/2}
\fr{[\tan(\theta)]^{1/2}}{|a-\tan(\theta)|^{1/2}}{\rm d}\theta
\le C^*_n mes(E)
\eeas
where $a=|v-v_*|^{-1}|v_*|, 0<C^*_n<2^{n+4}|{\mS}^{n-2}|$, and we have used $\tilde{\sg}\bot {\bf n}
\Longrightarrow\, |{\bf n}-\tan(\theta){\tilde \sg}|=1/\cos(\theta)$ and that
$\int_{0}^{\pi/2}
\fr{[\tan(\theta)]^{1/2}}{|a-\tan(\theta)|^{1/2}}{\rm d}\theta=
\int_{0}^{\infty}
\fr{1}{|a- t|^{1/2}}\fr{t^{1/2}}{1+t^2}{\rm d}t<7.$ On the other hand
using (\ref{5.9}) we have
\be{\bf 1}_{E}(v')+{\bf 1}_{E}(v_*')\ge {\bf 1}_{{\mS}^{n-1}_{v,v_*}(E)}(\sg)\quad \forall\,(v,v_*,\sg)\in {\bRn}\times {\bRn}\times {\bSn}.\label{5.15}\ee
and so we deduce for all $v\in E\setminus Z_0$
\beas&&\intt_{{\bRSn}}(1-\la{\bf n},\sg\ra^2){\bf 1}_{E}(v_*')\Big(\fr{|v-v_*|}{|v_*|}\Big)^{1/2}
{\rm d}v_*{\rm d}\sg
\\
&&=\fr{1}{2}\int_{{\bRn}}\Big(\fr{|v-v_*|}{|v_*|}\Big)^{1/2}
{\rm d}v_*\int_{{\bSn}}
(1-\la{\bf n},\sg\ra^2)\big({\bf 1}_{E}(v_*')+{\bf 1}_{E}(v_*')
\big){\rm d}\sg
\\
&&\ge \fr{1}{2}\int_{E}\Big(\fr{|v-v_*|}{|v_*|}\Big)^{1/2}
{\rm d}v_*\int_{{\bSn}}
(1-\la{\bf n},\sg\ra^2) {\bf 1}_{{\mS}^{n-1}_{v,v_*}(E)}(\sg){\rm d}\sg
\ge \fr{1}{2}C_{n,\ld}\int_{E}\fr{|v-v_*|^{1/2}}{|v_*|^{1/2}}
{\rm d}v_*\eeas
where we have used (\ref{5.14}).
Connecting the above estimates we obtain
\be C^*_n mes(E)\ge \fr{1}{2}C_{n,\ld}\int_{E}\fr{|v-v_*|^{1/2}}{|v_*|^{1/2}}
{\rm d}v_*\qquad \forall\, v\in E\setminus Z_0.\label{5.16}\ee
Since
$|v-v_*|^{1/2}\ge |v|^{1/2}-|v_*|^{1/2}$ and
$0<\int_{E}\fr{1}{|v_*|^{1/2}}
{\rm d}v_*<\infty$, it follows from (\ref{5.16}) and $C_{n,\ld}>0$ that the set
$E\setminus Z_0$ is bounded.

We next prove that the compact set $K:=\overline{E\setminus Z_0}$ satisfies (\ref{5.8}). Since $Z_0$ is a null set, the set $E\setminus Z_0$
also satisfies (\ref{5.7}), hence the set ${\cal Z}_{E\setminus Z_0}\subset {\mR}^{2n}$ defined in (\ref{5.12}) has measure zero.
Since $mes(E\setminus Z_0)=mes(E)>0$ and since every point in
$E$ is a density point of $E$, this implies that  every point in
$E\setminus Z_0$ is a density point of $E\setminus Z_0$.
It follows that the set $[(E\setminus Z_0)\times (E\setminus Z_0))]\setminus {\cal Z}_{E\setminus Z_0}$
is dense in $K\times K$:  given any $(v,v_*)\in K\times K$, there is a sequence
$\{(v_k, v_{*k})\}_{k=1}^{\infty}\subset [(E\setminus Z_0)\times (E\setminus Z_0))]\setminus
{\cal Z}_{E\setminus Z_0}$
such that $ (v_k, v_{*k})\to (v,v_*)\,(k\to\infty)$.
Then, since $(v,v_*,\sg)\mapsto v'=\fr{1}{2}(v+v_*)+\fr{1}{2}|v-v_*|\sg,
(v,v_*,\sg)\mapsto v_*'=\fr{1}{2}(v+v_*)-\fr{1}{2}|v-v_*|\sg$ are
continuous and since $K^c={\bRn}\setminus K$ is open and $K^c\subset (E\setminus Z_0)^c$,
it follows that
$${\bf 1}_{{\mS}_{v,v_*}^{n-1}(K)}(\sg)\ge \limsup_{k\to\infty}
{\bf 1}_{{\mS}_{v_k,v_{*k}}^{n-1}(E\setminus Z_0)}(\sg)\qquad \forall\,\sg\in {\bSn}$$
and thus by Fatou's lemma
we obtain
\beas&&|{\mS}_{v,v_*}^{n-1}(K)|=\int_{{\bSn}}{\bf 1}_{{\mS}_{v,v_*}^{n-1}(K)}(\sg){\rm d}\sg\ge
\int_{{\bSn}}\limsup_{k\to\infty}
{\bf 1}_{{\mS}_{v_k,v_{*k}}^{n-1}(E\setminus Z_0)}(\sg){\rm d}\sg\\
&&\ge \liminf_{k\to\infty}\int_{{\bSn}}
{\bf 1}_{{\mS}_{v_k,v_{*k}}^{n-1}(E\setminus Z_0)}(\sg)
{\rm d}\sg
=
\liminf_{k\to\infty}|{\mS}_{v_k,v_{*k}}^{n-1}(E\setminus Z_0)|\ge \ld|{\bSn}|.\eeas
This proves that $K$ satisfies (\ref{5.8}).

{\it Step3.} Prove that $mes(K^{\circ}\setminus E)=0.$  We may assume that $K^{\circ}$ is non-empty.
It suffices to prove that there is a
constant $\alpha>0$, e.g.
$\alpha=\fr{nC_{n,\ld}}{4^{n+1}|{\mS}^{n-2}|}\ld$, such that
for any $v_0\in K^{\circ}$ and any $R>0$ satisfying $B_R(v_0)\subset K^{\circ}$ we have
\be mes(E\cap B_R(v_0))\ge \alpha mes(B_R(v_0)).\label{5.17}\ee
In fact if this holds true, then for any $v\in K^{\circ}\setminus E$ and
any $r>0$ satisfying $B_r(v)\subset K^{\circ}$ we have
\beas mes\big((K^{\circ}\setminus E)\cap B_r(v)\big)
\le (1-\alpha)mes(B_r(v)).\eeas
This implies that $mes(K^{\circ}\setminus E)=0$.

Now given any $v_0\in K^{\circ}$ and any $R>0$ satisfying $B_R(v_0)\subset K^{\circ}$.
We first prove that there is a null set
$Z_1\subset E\times (0,\infty)$ such that for all $(x,r)\in \{(E\cap B_{R/6}(v_0))\times [R/2, 5R/6]\}\setminus Z_1
$ and all $\sg\in {\bSn}$
\be \fr{1}{R^n}mes(E\cap B_{R}(v_0))\ge \fr{C_{n,\ld}}{4^{n+1}|{\mS}^{n-2}|}|{\mS}^{n-1}_{x+r\sg, x-r\sg}(E)|.\label{5.18*}
\ee
We will use a fact that for any open set $G\subset {\bRn}$
satisfying $mes(K\cap G)>0$, it holds $mes(E\cap G)>0$. This is because $E\setminus Z_0$ is dense in $K$ and every point in $E$ is a density point of $E$. From this we have $mes(E\cap B_{R/6}(v_0))>0$.
Using (\ref{5.15}) we have for all $x\in{\bRn}$ and all $\og,\sg\in {\bSn}$
\be {\bf 1}_{E}\big(x+\fr{r}{2}(\og+\sg)\big)+
{\bf 1}_{E}\big(x+\fr{r}{2}(\og-\sg)\big)\ge {\bf 1}_{{\mS}^{n-1}_{x, x+r\og}(E)}(\sg), \label{5.20*}\ee
\be {\bf 1}_{E}(x
+r\og)+{\bf 1}_{E}(x-r\og)\ge {\bf 1}_{{\mS}^{n-1}_{x+r\sg, x-r\sg}(E)}(\og).\label{5.21*}\ee
By change of variable and Fubini theorem we have
$$\int\!\!\!\int_{E\times(0,\infty)}r^{n-1}{\rm d}x {\rm d}r\int_{{\bSn}}
{\bf 1}_{{\cal Z}_{E}}(x,x+r\og){\rm d}\og=mes({\cal Z}_{E})=0$$
and so there is a null set $Z_1\subset E\times (0,\infty)$ such that
for every $(x, r)\in (E\times (0,\infty))\setminus Z_1$,
there is a null set $S_{x,r}\subset {\bSn}$ such that
${\bf 1}_{{\cal Z}_{E}}(x,x+r\og)=0$ for all $\og\in {\bSn}\setminus S_{x,r}$.
By definition of ${\cal Z}_{E}$, this implies that for every $(x, r)\in (E\times (0,\infty))\setminus Z_1$,
\be |{\mS}^{n-1}_{x,x+r\og}(E)|\ge \ld |{\bSn}|{\bf 1}_{E}(x+r\og)\qquad \forall\, \og\in {\bSn}\setminus S_{x,r}.\label{5.19*}\ee
Let $(x,r)\in \{(E\cap B_{R/6}(v_0))\times [R/2, 5R/6]\}\setminus Z_1
$. Then $B_{r}(x)\subset B_{R}(v_0)$ and by Lemma \ref{lemma5.4},
(\ref{5.20*}), (\ref{5.13}), (\ref{5.19*}) and (\ref{5.21*})  we compute
\beas&&\fr{1}{R^n}mes(E\cap B_{R}(v_0))\ge\fr{1}{(2r)^n}mes(E\cap B_{r}(x))
=
\fr{1}{2^nr^n}\int_{B_{r}(x)}{\bf 1}_{E}(v_*){\rm d}v_*
\\
&&\ge\fr{1}{2^n}
\intt_{{\bSn}\times {\bSn}}\fr{\sqrt{1-\la \og,\sg\ra^2}}{2^{n+1}|{\mS}^{n-2}|}
\Big({\bf 1}_{E}(x+\fr{r}{2}(\og+
\sg)\big)+{\bf 1}_{E}\big(x+\fr{r}{2}(\og-
\sg)\big)\Big)
{\rm d}\sg{\rm d}\og
\\
&&\ge\fr{1}{2^n}
\int_{{\bSn}}{\rm d}\og \int_{{\bSn}}
\fr{\sqrt{1-\la \og,\sg\ra^2}}{2^{n+1}|{\mS}^{n-2}|}
{\bf 1}_{{\mS}^{n-1}_{x,x+r\og}(E)}(\sg)
{\rm d}\sg
\\
&&\ge \fr{C_{n}}{2^{2n+1}|{\mS}^{n-2}|}\int_{{\bSn}}|{\mS}^{n-1}_{x,x+r\og}(E)|^{5}{\rm d}\og
\ge\fr{C_{n}}{2^{2n+1}|{\mS}^{n-2}|}\int_{{\bSn}}(\ld |{\mS}^{n-1}|)^{5}{\bf 1}_{E}(x+r\og){\rm d}\og
\\
&&
=\fr{C_{n,\ld}}{2^{2n+2}|{\mS}^{n-2}|}\int_{{\bSn}}\big({\bf 1}_{E}(x+r\og)+{\bf 1}_{E}(x-r\og)\big){\rm d}\og
\\
&&\ge \fr{C_{n,\ld}}{2^{2n+2}|{\mS}^{n-2}|}\int_{{\bSn}}{\bf 1}_{{\mS}^{n-1}_{x+r\sg, x-r\sg}(E)}(\og){\rm d}\og=\fr{C_{n,\ld}}{4^{n+1}|{\mS}^{n-2}|}|{\mS}^{n-1}_{x+r\sg, x-r\sg}(E)|
\qquad \forall\, \sg\in {\bSn}.\eeas
So (\ref{5.18*}) holds true.

Next, to  estimate $|{\mS}^{n-1}_{x+r\sg, x-r\sg}(E)|$ we use change of variable to see that
$$\intt_{{\bRn}\times(0,\infty)}r^{n-1}{\rm d}x{\rm d}r \int_{{\bSn}}
{\bf 1}_{{\cal Z}_{E}}(x+r\sg,x-r\sg){\rm d}\sg=mes({\cal Z}_E)=0$$
and so there is a null set $Z_2\subset {\bRn}\times(0,\infty)$ such that
for every $(x, r)\in ({\bRn}\times(0,\infty))\setminus Z_2$, there is a null set $S^*_{x,r}\subset {\bSn}$ such that
${\bf 1}_{{\cal Z}_{E}}(x+r\sg,x-r\sg)=0$ for all $\sg\in {\bSn}\setminus S^*_{x,r}.$ This implies
that for all $(x, r)\in ({\bRn}\times(0,\infty))\setminus Z_2$
\be |{\mS}^{n-1}_{x+r\sg, x-r\sg}(E)|\ge \ld |{\mS}^{n-1}|{\bf 1}_{E}(x+r\sg){\bf 1}_{E}(x-r\sg)\qquad \forall\, \sg\in{\bSn}\setminus S^*_{x,r}.\label{5.22}\ee
Connecting (\ref{5.18*}) we then obtain for all $(x,r)\in \{(E\cap B_{R/6}(v_0))\times [R/2,5R/6]\}\setminus (Z_1\cup Z_2)$ and for all $\sg\in{\bSn}\setminus S^*_{x,r}$
$$ \fr{1}{R^n}mes(E\cap B_{R}(v_0))
\ge \fr{C_{n,\ld}}{4^{n+1}|{\mS}^{n-2}|}\ld |{\mS}^{n-1}|
{\bf 1}_{E}(x+r\sg){\bf 1}_{E}(x-r\sg).$$
Taking integration for both sides of the inequality with respect to the measure element \\
${\rm d}x r^{n-1}{\rm d}r{\rm d}\sg ={\rm d}x{\rm d}y$  gives
$$J\fr{1}{R^n}mes(E\cap B_{R}(v_0))\ge \fr{C_{n,\ld}}{4^{n+1}|{\mS}^{n-2}|}\ld |{\mS}^{n-1}|J
=J\alpha \fr{1}{R^n}mes(B_R(v_0)) $$
where
$$J=\int_{E\cap B_{R/6}(v_0)}{\rm d}x\int_{R/2\le |y|\le 5R/6}
{\bf 1}_{E}(x+y){\bf 1}_{E}(x-y){\rm d}y.$$
Now in order to prove (\ref{5.17}), we need only to prove that $J>0$.
To do this we fix a vector ${\bf e}\in {\bSn}$ and let
\beas&& a=v_0+(2R/3){\bf e},\quad
b=v_0-(2R/3){\bf e},\quad E_a=E\cap B_{R/6}(a),\quad  E_b=
E\cap B_{R/6}(b),\\
&&
I(x)=\int_{{\bRn}}{\bf 1}_{E_a}(x+y)
{\bf 1}_{E_b}(x-y)
{\rm d}y,\quad x\in {\bRn}.\eeas
We have $B_{R/6}(a), B_{R/6}(b)\subset B_R(v_0)\subset K^{\circ}$, and so
$mes(E_a)>0, mes(E_b)>0$, and the function
$x\mapsto I(x)=({\bf 1}_{E_a}*{\bf 1}_{E_b})(2x)$ is continuous on ${\bRn}$
and
$\int_{{\bRn}}I(x){\rm d}x=2^{-n}mes(E_a)mes(E_b)>0$ and so the set
$G=\{x\in {\bRn}\,|\, I(x)>0\}$ is open and non-empty.
 Observe that for any $x,y\in{\bRn}$ satisfying
${\bf 1}_{E_a}(x+y){\bf 1}_{E_b}(x-y)>0$,  i.e. $x+y\in E_a, x-y\in E_b$,
we have by using
$v_0=\fr{1}{2}(a+b)$ and $|a-b|=4R/3$ that $x=\fr{1}{2}(x+y+x-y)\in B_{R/6}(v_0)$
and $R/2\le |y|=\fr{1}{2}|x+y-(x-y)|\le 5R/6$.
This implies that $G\subset B_{R/6}(v_0)\subset K^{\circ}$
and
$I(x)=\int_{R/2\le |y|\le 5R/6}{\bf 1}_{E_a}(x+y)
{\bf 1}_{E_b}(x-y)
{\rm d}y$ and so $mes(E\cap G)>0$ and
$$J\ge \int_{E\cap G}{\rm d}x\int_{R/2\le |y|\le 5R/6}
{\bf 1}_{E_a}(x+y)
{\bf 1}_{E_b}(x-y){\rm d}y=\int_{E\cap G}I(x){\rm d}x>0.$$
Thus (\ref{5.17}) holds true.

{\it Step4.} Finally suppose that $mes(\p K)=0$. Then from
$E\setminus K\subset Z_0$, $K\setminus E\subset (K^{\circ}\setminus E)\cup \p K$, and {\it Step3}
we conclude $mes(E\setminus K)=mes(K\setminus E)=0$.
This completes the proof of the lemma.
\quad $\Box$
\\

\noindent {\bf 4.3.} Now we prove Theorem \ref{theorem4.1} and Theorem \ref{theorem4.2}.
First of all we note that the condition (\ref{4.1}) is invariant under translations and
orthogonal transforms.
\vskip2mm

{\bf Proof of Theorem} \ref{theorem4.1}. Let $K$ satisfy the assumption in
Theorem \ref{theorem4.1}.  The proof consists of five steps.

{\it Step1}. Prove that $K$ is an $n$-dimensional convex body.
We first prove that $({\rm conv}(K))^{\circ}\neq \emptyset$,
where ${\rm conv}(K)$ is the convex hull  of $K$.
Let $a, b\in K$ satisfy
$|a-b|={\rm diam}(K):=h\,(>0)$. Then $a, b\in \p K$. We need to prove that $a_0:=\fr{1}{2}(a+b)\in K$.
To do this
we take $\og\in {\bSn}$ satisfying
$\la \og, a-b\ra=0$ and let
\be x_0=\fr{a+b}{2}+\fr{|a-b|}{2}\og=a_0+\fr{h}{2}\og,\quad
y_0=\fr{a+b}{2}-\fr{|a-b|}{2}\og=a_0-\fr{h}{2}\og.\label{6.CC}\ee
By condition (\ref{4.1}), one of $x_0, y_0$ belongs to $K$.
By replacing $\og$ with $-\og$ we may assume that $x_0\in K$.
Let $t_0=\max\{t\in [1,\infty)\,|\, a_0+t\fr{1}{2}h\og\in K\}$. Then $t_0\ge 1$ and
$x^*_0:=a_0+t_0\fr{1}{2}h\og\in \p K$.
Let $a_0^*\in {\bRn}$ satisfy
$
\fr{1}{2}(a_0+a_0^*)=\fr{1}{2}(a+x_0^*)$, i.e.
$a_0^*=a+x_0^*-a_0=a+t_0\fr{1}{2}h\og.$
Then using $\og\,\bot\,a-b$ we compute $
|a_0-a_0^*|=|a-x_0^*|=\fr{1}{2}h\sqrt{1+t_0^2}>0$,
and
$|a_0^*-b|=h\sqrt{1+\fr{1}{4}t_0^2}>h={\rm diam}(K)$ which implies that $a_0^*\not\in K$.
Now take $\sg=(a_0-a_0^*)/|a_0-a_0^*|\,(=(a_0-a_0^*)/|a-x_0^*|).$
Then
\beas a_0=
\fr{a+x_0^*}{2}+\fr{|a-x_0^*|}{2}\sg,\quad
a_0^*=
\fr{a+x_0^*}{2}-\fr{|a-x_0^*|}{2}\sg.\eeas
Since $a, x_0^*\in \p K$ and $a_0^*\not\in K$, it follows from the condition (\ref{4.1})
that $a_0\in K$.

Replacing $\og$ with ${\bf u}_i$ and using (\ref{4.1}) again we see that for the orthogonal unit vectors ${\bf u}_1=(1,0,...,0), {\bf u}_2=(0,1,0,...,0),..., {\bf u}_n=
(0,...,0,1)$ we have, for every $i\in\{ 1,2,...,n\}$, that
either $a_0+\fr{1}{2}h{\bf u}_i\in K $ or $
a_0-\fr{1}{2}h{\bf u}_i\in K.$ Thus there are $
k_i\in\{0,1\}$ such that
$a_i:=a_0+(-1)^{k_i}\fr{1}{2}h {\bf u}_i\in K,\, i=1,2,...,n$, and so
$a_1-a_0, a_2-a_0, ..., a_n-a_0$ are linearly independent and thus
${\rm conv}(\{a_0, a_1, ..., a_n\})$ is an $n$-dimensional convex body. This implies
$({\rm conv}(K))^{\circ}\supset ({\rm conv}(\{a_0, a_1, ..., a_n\}))^{\circ}\neq \emptyset$.

Next we prove that
\be \p {\rm conv}(K)\subset \p K.\label{6.2}\ee
Let $\Gm$ be the set of extreme points of ${\rm conv}(K)$. It is well-known that
$\Gm\subset K\setminus K^{\circ}=\p K$. Thus to prove (\ref{6.2}) it suffices to prove that
$\p {\rm conv}(K)\subset \Gm.$
Take any $x_0\in \p {\rm conv}(K)$. By separation theorem
and $({\rm conv}(K))^{\circ}\neq\emptyset$,
there exist $\sg\in {\bSn}$ and $y_0\in \p {\rm conv}(K)$ with
$\la y_0,\sg\ra<\la x_0,\sg\ra$
such that
$H^{(-)}_{x_0}(\sg):=\{x\in {\bRn}\,\,|\,\,\la x-x_0,\sg\ra=0\}$ and
$H^{(+)}_{y_0}(\sg):=
\{x\in {\bRn}\,\,|\,\,\la x-y_0,\sg\ra=0\}$ are parallel supporting planes
of $K$ at $x_0,y_0$ respectively and satisfy
 \be\la x-x_0,\sg\ra\le 0,\quad \la x-y_0,\sg\ra\ge 0 \qquad \forall\, x\in {\rm conv}(K).\label{6.3}\ee
Let
$$d=\max\{|x-y|\,\,|\,\, x\in H^{(-)}_{x_0}(\sg)\cap {\rm conv}(K),\,
y\in H^{(+)}_{y_0}(\sg)\cap {\rm conv}(K)\}$$
and take
$x_1\in H^{(-)}_{x_0}(\sg)\cap {\rm conv}(K), y_1\in H^{(+)}_{y_0}(\sg)\cap {\rm conv}(K)$
such that $d=|x_1-y_1|.$
From $x_0\in H^{(-)}_{x_0}(\sg)\cap {\rm conv}(K),
y_0\in H^{(+)}_{y_0}(\sg)\cap {\rm conv}(K)$ we have
$d\ge |x_0-y_0|>0$. Using (\ref{6.3}) it is easily seen that
$x_1, y_1\in \Gm$. In fact, if $0<t<1$ and $a,b\in {\rm conv}(K)$
are such that $x_1=(1-t)a+ tb$, then using the first inequality in
(\ref{6.3}) and $\la x_1-x_0,\sg\ra=0$ we deduce
$\la a-x_0,\sg\ra=\la b-x_0,\sg\ra=0$ and so
$a,b\in H^{(-)}_{x_0}(\sg)\cap {\rm conv}(K)$. This implies that the equalities
$d=|x_1-y_1|=|(1-t)(a-y_1)+t(b-y_1)|=(1-t)|a-y_1|+t|b-y_1|$ hold
and so $|a-y_1|=|b-y_1|=d$,
$\la a-y_1, b-y_1\ra=|a-y_1||b-y_1|$, hence
$(a-y_1)/|a-y_1|=(b-y_1)/|b-y_1|$, and thus $a=b$. This proves that $x_1\in \Gm$.
Using the second inequality in
(\ref{6.3}) one also proves that $y_1\in \Gm$. By the condition (\ref{4.1})
and $x_1, y_1\in \Gm\subset \p K$, we then conclude that
either $\fr{1}{2}(x_1+y_1)+\fr{1}{2}|x_1-y_1|\sg\in K$ or
$\fr{1}{2}(x_1+y_1)-\fr{1}{2}|x_1-y_1|\sg\in K$ and thus using (\ref{6.3})
we deduce that
\beas&& {\rm either}\quad -\fr{\la x_1-y_1, \sg\ra}{2}+\fr{|x_1-y_1|}{2}
=\Big\la \fr{x_1+y_1}{2}+\fr{|x_1-y_1|}{2}\sg-x_0,\,\sg\Big\ra
\le 0,\\
&&{\rm or}\quad \fr{\la x_1-y_1, \sg\ra}{2}-\fr{|x_1-y_1|}{2}=\Big\la \fr{x_1+y_1}{2}-\fr{|x_1-y_1|}{2}\sg-y_0,\,\sg\Big\ra
\ge 0.\eeas
Since $\la x_1-y_1, \sg\ra\le |x_1-y_1|$, this gives the equality
 $|x_1-y_1|=\la x_1-y_1,\sg\ra$.
From this we have
$d=\la x_1-y_1,\sg\ra=\la x_0-y_1,\sg\ra\le |x_0-y_1|\le d$ and so
$|x_0-y_1|=d$ which (as shown above) implies
that $x_0\in\Gm$. This proves $\p {\rm conv}(K)\subset \Gm$ and thus
(\ref{6.2}) holds true.

Next we prove that ${\rm conv}(K)= K$. Since, by convexity,
${\rm conv}(K)=\overline{({\rm conv}(K))^{\circ}}$,
we need only to prove that $({\rm conv}(K))^{\circ}\subset K$.\,
Let $x\in ({\rm conv}(K))^{\circ}$. Take $a\in {\rm conv}(K)$
such that
$|a-x|=\max\{|y-x|\,\, | \,\, y\in {\rm conv}(K)\}$, and then
let $H_x=\{y\in {\bRn}\,\,|\,\, \la y-x,\, a-x\ra=0\}$ and take $b\in H_x\cap  {\rm conv}(K)$ such that
$|b-x|=\max\{|y-x|\,\, | \,\, y\in H_x\cap {\rm conv}(K)\}.$
 From $x\in ({\rm conv}(K))^{\circ}$ it is easily seen that $|a-x|>0, |b-x|>0$. It is
 obvious that $a\in \p{\rm conv}(K)$. We show that it also holds $b\in \p {\rm conv}(K)$.
Otherwise, $b\not\in \p {\rm conv}(K)$, then $b\in
({\rm conv}(K))^{\circ}$ and so there is $\dt>0$ such that
$B_{\dt}(b)\subset  {\rm conv}(K)$.
So $\wt{b}:=b+\dt\fr{b-x}{|b-x|}\in B_{\dt}(b)\subset  {\rm conv}(K)$
and it also holds $\wt{b}\in H_x$, hence $\wt{b}\in H_x\cap {\rm conv}(K)$.
By definition of $|b-x|$ we deduce a contradiction:
$|b-x|+\dt=|\wt{b}-x|\le |b-x|$.  This proves that $b\in \p {\rm conv}(K)$.

Connecting (\ref{6.2}) we conclude that $a,b\in \p K$.
From  $\la b-x,a-x\ra=0$ we have
$|a+b-2x|^2=|a-x|^2+|x-b|^2=|a-b|^2$ and for the unit vector
$\sg=\fr{a+b-2x}{|a-b|}$ we have
$x=\fr{1}{2}(a+b)-\fr{1}{2}|a-b|\sg.$ Let
$y=\fr{1}{2}(a+b)+\fr{1}{2}|a-b|\sg.$ Then
$|y-x|=|a-b|>|a-x|$
and so the maximality of $|a-x|$ implies that $y\not\in {\rm conv}(K)$ hence
$y\not\in K$. Thus, by the condition (\ref{4.1}) and $a, b\in \p K$
we conclude that $x\in K$. This proves that
$({\rm conv}(K))^{\circ}\subset K$ and thus
${\rm conv}(K)= K$.

{\it Step2.} Prove that $K$ has constant width.
We will use a characterization of convex bodies of constant width \cite{CG}.
 Let $H_1, H_2$ be any two
parallel supporting planes of $K$. Then there exist
$\sg\in{\bSn}, p\in H_1\cap \p K$ and
$q\in H_2\cap \p K$ with $\la p,\sg\ra\neq \la q,\sg\ra$ such that
$H_1=\{x\in {\bRn}\,\, |\,\,\la x-p,\sg\ra=0\},$
$H_2=\{x\in {\bRn}\,\, |\,\,\la x-q,\sg\ra=0\}$. We may assume that
$\la p,\sg\ra>\la q,\sg\ra$. Then
\be\la  x-p,\sg\ra\le 0,\quad  \la x-q, \sg\ra\ge 0 \qquad \forall\, x\in K.
\label{pq}\ee
Let us prove that $\sg=(p-q)/|p-q|$. Since $p, q\in \p K,$  by (\ref{4.1}) we may assume that
$\fr{1}{2}(p+q)+\fr{1}{2}|p-q|\sg\in K.$
Then using the first inequality in (\ref{pq}) we deduce
$\fr{1}{2}|p-q|\le \fr{1}{2}\la p-q,\sg\ra$ which implies that $|p-q|= \la p-q,\sg\ra$
(because $\la p-q,\sg\ra\le |p-q|$) hence
$\sg=(p-q)/|p-q|$.
Similarly if
$\fr{1}{2}(p+q)-\fr{1}{2}|p-q|\sg\in K$, then
using the second inequality in (\ref{pq}) we also deduce
$\sg=(p-q)/|p-q|$. This proves that the chord
$[p, q]=\{(1-t)p+t q\,\,|\,\, t\in [0,1]\}$ is orthogonal to
$H_1$ and $H_2$, that is,
$(p-q)/|p-q|$ is a common normal vector of $H_1$ and
$H_2$. According to a characterization of convex bodies of constant width in page 53 of \cite{CG},  $K$ has constant width.

{\it Step3}. Having proven that $K$ is a convex body of constant width,
we now make a further preparation for the rest of two steps. Recalling basic
properties of convex bodies of constant width, it is well-known that

$\bullet$ The insphere and circumsphere of $K$ are concentric, and their radii, $r$ and $R$ respectively, satisfy
\be r+R={\rm diam}(K),\quad R\ge r\ge \big(\sqrt{2(n+1)/n}-1\big)R.\label{6.7}\ee
See \cite{CG}, \cite{MMO2019}.

$\bullet$  For any $p\in \p K$
there exists ${\bf e}\in {\bSn}$ such that
$\la x-p,{\bf e}\ra>0$ for all $ x\in K\setminus \{p\}.$ In particular,
every point in $\p K$ is an extreme point of $K$.

$\bullet$  $\p K$ and ${\mS}^{n-1}$ are homeomorphic.

Since the condition (\ref{4.1}) holds under translations and orthogonal transforms,
we may assume without loss of generality that the common center of the insphere and circumsphere of $K$ is the origin
$0$.
Thus we can write the insphere and circumsphere of $K$ as $B_r:=B_r(0), B_R:=B_R(0)$
respectively and we have $B_r\subset K\subset B_R.$
Then using $r+R={\rm diam}(K)$ it is easily seen that the following implication relations hold:
\be a\in (\p B_r)\cap \p K\,
\Longrightarrow\, -\fr{R}{r}a\in (\p B_R)\cap \p K;\quad
 b\in (\p B_R)\cap \p K\,
\Longrightarrow\, -\fr{r}{R}b\in (\p B_r)\cap \p K\,.\label{6.7*}\ee
Now let $a_0\in (\p B_r)\cap \p K, b_0\in  (\p B_R)\cap \p K$ be such that
\be |a_0-b_0|={\rm dist}\big((\p B_r)\cap \p K, (\p B_R)\cap \p K\big). \label{6.6}\ee
It is easily seen that
\be \la a_0, b_0\ra\ge 0. \label{6.6*}\ee
In fact, applying Lemma \ref{lemma5.1} to the set $K$ and
the unit vector ${\bf n}=b_0/|b_0|$ we have
$(\p B_r)_{\bf n}^+\cap \p K\neq \emptyset$.
Take $x_0\in (\p B_r)_{\bf n}^+\cap \p K$. Then
$\la x_0, {\bf n}\ra\ge 0$, i.e.
$\la x_0, b_0\ra\ge 0$. So
$|a_0-b_0|^2\le |x_0-b_0|^2\le r^2+R^2$ which implies that $\la a_0, b_0\ra\ge 0$.

{\it Step4}. Assuming $n=2$, we
prove that $r=R$, i.e. prove that $K$ is a disk. We will use contradiction argument, so
in the following we assume that $r<R$. We will prove that there exists a rectangular
$a_1a_2a_3a_4$ in $K$ such that
the three vertices $a_1, a_2, a_3$ belong to $\p K$ but the
fourth vertex $a_4$ belongs to $K^{\circ}$. Then we can deduce a contradiction.

To do this we denote
$$u(\theta):=(\cos(\theta),\sin(\theta)),\quad \theta\in[0,2\pi].$$
In the following derivation the equality
$\la u(\theta), u(\vartheta)\ra=\cos(\theta-\vartheta)\,\,(\forall\,\theta,\vartheta
\in [0,2\pi] )$ will be often used.
And we use notations in {\it Step3} and recall that $a_0, b_0$
are given by (\ref{6.6}) and satisfy (\ref{6.6*}).
After making an orthogonal linear transform we can assume that
$$b_0=Ru(\theta_1),\,\,\theta_1=\pi/2;\quad a_0=ru(\theta_2)\,\,\,{\rm for\,\,
some\,\,}\,0\le \theta_2\le\pi/2.$$
Then, since $a_0\in(\p B_r)\cap \p K, b_0\in(\p B_R)\cap \p K$, we know from (\ref{6.7*}) that
\be a_1:=-\fr{r}{R}b_0=-ru(\theta_1)\in (\p B_r)\cap \p K,\quad
a_2:=-\fr{R}{r}a_0=-Ru(\theta_2)\in (\p B_R)\cap \p K.\label{6.8}\ee
Let $0<\theta_0<\pi/2$ be such that $\sin(\theta_0)=\fr{r}{R}$. Using the inequality in (\ref{6.7}) for $n=2$, i.e. $r\ge (\sqrt{3}-1)R$, we have $\sin(\theta_0)\ge \sqrt{3}-1>\sqrt{2}/2$ and so
$\theta_0>\pi/4$.

We first prove that the
angular $\theta_2$ in (\ref{6.8}) satisfies
\be \pi/6\le \theta_2<\theta_0.\label{6.9}\ee
To do this let $c_0=ru(\theta_0).$ Then
$ \la c_0, b_0-c_0\ra=0$. We show that
$c_0\in K^{\circ}$.  Otherwise, $c_0\in \p K$ hence
$c_0\in (\p B_r)\cap \p K$, and it follows from
the implication relation in (\ref{6.7*}) that $-\fr{R}{r}c_0\in \p K$.
Using
$ c_0\,\bot\,b_0-c_0$ we deduce a contradiction:
$(r+R)^2=({\rm diam}(K))^2\ge|b_0- (-\fr{R}{r}c_0)|^2=|b_0-c_0|^2+(r+R)^2
>(r+R)^2.$ Thus we must have $c_0\in K^{\circ}.$ From this we see that
there is $\dt>0$ small enough such that $c_0^*:=c+\dt u(\theta_0)=(r+\dt)u(\theta_0)\in K$.
Next we show that $\theta_2<\theta_0$. If $\theta_2=\theta_0$, then $a_0=c_0\in K^{\circ}$ which contradicts $a_0\in \p K$;
if $\theta_0<\theta_2<\pi/2$, then $a_0\in
({\rm conv}(\{0, b_0, c_0^*\}))^{\circ}\subset K^{\circ}$,
which also contradicts $a_0\in \p K$; if
$\theta_2=\pi/2$, then $a_0=ru(\pi/2)=\fr{R-r}{R+r}a_1+\fr{2r}{R+r}b_0
\in  K^{\circ}$ (because $a_1, b_0\in \p K, a_1\neq b_0, 0<r<R,$ and
every point in $\p K$ is an extreme point of $K$) which also contradicts $a_0\in \p K$.
Thus we must have $\theta_2<\theta_0$. In order to prove $\theta_2\ge \pi/6$,
we need to prove that
\be ru(\theta)\in K^{\circ}\qquad \forall\,\theta\in (\theta_2,\pi-\theta_2).\label{6.10}\ee
In fact for any $\theta\in (\theta_2, \pi-\theta_2)$ we have $\sin(\theta)>\sin(\theta_2)$.
Since $|ru(\theta)-b_0|^2=|ru(\theta)-Ru(\pi/2)|^2
=r^2+R^2-2rR\sin(\theta),  |a_0-b_0|^2=
|ru(\theta_2)-Ru(\pi/2)|^2=r^2+R^2-2rR\sin(\theta_2)$, this implies that
$|ru(\theta)-b_0|<|a_0-b_0|$. By the minimality of
$|a_0-b_0|$ we conclude that $ru(\theta)\not\in \p K$ and so
$ru(\theta)\in K^{\circ}.$ This proves (\ref{6.10}).

Now take a sequence $\{\vartheta_k\}_{k=1}^{\infty}\subset (\theta_2, \theta_0)$
satisfying $\vartheta_k\to \theta_2\,\,(k\to\infty)$.
For every $k\in{\mN}$, let ${\bf n}_{k}=u(\vartheta_k+\pi/2)$.
By Lemma \ref{lemma5.1},
$(\p B_r)_{{\bf n}_{k}}^+\cap \p K$ is non-empty.
Take $ru(\vartheta_k^*)\in (\p B_r)_{{\bf n}_{k}}^+\cap \p K$ with
$\vartheta_k^*\in [0,2\pi)$. Then
$\la ru(\vartheta_k^*), {\bf n}_{k}\ra\ge 0$\, i.e.
$\la u(\vartheta_k^*), u(\vartheta_k+\pi/2)\ra=
\sin(\vartheta_k^*-\vartheta_k)\ge 0$. Since $\vartheta_k^*\in [0,2\pi)$ and
$0\le \theta_2<\vartheta_k<\pi/2$, this implies that $\vartheta_k^*-\vartheta_k\in [0,\pi]$
and so $\theta_2<\vartheta_k\le \vartheta_k^*\le \pi+\vartheta_k$.
Also from $ru(\vartheta_k^*)\in \p K$ and
(\ref{6.10}) we deduce that $\vartheta_k^*\not\in(\theta_2,\pi-\theta_2)$ and thus
$\pi-\theta_2\le \vartheta_k^*\le \pi+\vartheta_k.$ Now choose a convergent
subsequence $\{\vartheta_{k_j}^*\}_{j=1}^{\infty}$ of $\{\vartheta_k^*\}_{k=1}^{\infty}$
and let
$\vartheta^*_{\infty}=\lim\limits_{j\to\infty}\vartheta_{k_j}^*$. Then,
from $\vartheta_k\to \theta_2\,\,(k\to\infty)$, we have  that
$\pi-\theta_2\le \vartheta_{\infty}^*\le \pi+\theta_2$ and
$ru(\vartheta^*_{\infty})=\lim\limits_{j\to\infty}ru(\vartheta_{k_j}^*)\in (\p B_r)\cap \p K$.
By (\ref{6.7*}) we conclude that $b^*:=-Ru(\vartheta^*_{\infty})\in (\p B_R)\cap \p K$, and thus
by the minimality of $|a_0-b_0|$ we have
$|a_0-b_0|\le |a_0-b^*|$.
Since $|a_0-b_0|^2=
 |ru(\theta_2)-Ru(\pi/2)|^2= r^2+R^2-2rR\cos(\theta_2-\pi/2),
 |a_0-b^*|^2=
 |ru(\theta_2)+Ru(\vartheta^*_{\infty})|^2
=r^2+R^2+2rR\cos(\theta_2-\vartheta^*_{\infty})$, this implies that
$\cos(\pi/2+\theta_2)=-\cos(\theta_2-\pi/2)\le \cos(\theta_2-\vartheta^*_{\infty})=\cos(\vartheta^*_{\infty}-\theta_2).$
Since
$0<\pi-2\theta_2\le \vartheta^*_{\infty}-\theta_2\le\pi, 0<\pi/2+\theta_2
<\pi$ and since $\cos(\cdot)$ is strictly decreasing on $[0,\pi]$, it follows
that $\pi/2+\theta_2\ge \vartheta^*_{\infty}-\theta_2\ge \pi-2\theta_2$. Thus
$\theta_2\ge \pi/6.$

As mentioned in {\it Step3}, $\p K$ and ${\mS}^1$ are homeomorphic
and the mapping $\vp: {\mS}^1\to \p K$,
$\vp(u)=\rho(u)u, u\in {\mS}^1$, is a homeomorphism,
where
$u\mapsto \rho(u)=\max\{ \rho\in[r, R]\,\,|\,\,\rho u\in \p K\}$
is continuous on ${\mS}^1$. Thus using $u(\theta)=(\cos(\theta),\sin(\theta))$, the boundary
$\p K$ can be written
$$\p K=\{\rho(\theta)u(\theta)\,\,|\,\,\theta\in [0,2\pi]\},\qquad
\rho(\theta):=\rho(u(\theta)).$$
Note that $\theta\mapsto \rho(\theta)$ is continuous on $[0,2\pi]$ and
$r\le \rho(\theta)\le R$ for all $\theta\in [0,2\pi].$

Now we prove that
\be {\rm there\,\,\, exists}\,\,\, a_3\in \p K\,\,\, {\rm such\,\,\,that}\,\, \,\la a_3
-a_2,a_1-a_2\ra =0\label{6.11}\ee
\be {\rm and}\,\,\, a_4\in K^{\circ}\,\,\, {\rm where}\,\,\,\fr{a_2+a_4}{2}=\fr{a_1+a_3}{2}\,\,\,\,
{\rm i.e.}\,\,\, a_4=a_1+a_3-a_2.\label{6.11*}\ee
To prove (\ref{6.11}) we use a continuous function
\beas&&
f(\theta):=\la \rho(\theta)u(\theta)-a_2,a_1-a_2
\ra=\big\la \rho(\theta)u(\theta)+Ru(\theta_2),
-ru(\pi/2)+Ru(\theta_2)\big\ra\\
&&=R^2+\rho(\theta)R\cos(\theta-\theta_2)-r\rho(\theta)\sin(\theta)-rR\sin(\theta_2),\quad \theta\in[\pi/2, \pi].\eeas
Using $0<r<R$, $r\le \rho(\cdot)\le R$, and $\cos(\theta_2)>0$ we have
\beas&& f(\pi/2)
=R^2-r\rho(\pi/2)+\big(\rho(\pi/2)-r\big)R\sin(\theta_2)\ge R^2-r\rho(\pi/2)>0,\\
&& f(\pi)\le R^2 g(\theta_2),\quad {\rm where}\,\,\, g(\theta):=1-\fr{r}{R}\big(\cos(\theta)+\sin(\theta)\big).\eeas
We need to prove that $g(\theta_2)\le 0$. To do this we observe that
$\theta\mapsto g(\theta)$ is convex on $ [\pi/6, \theta_0]$
so that it holds $g(\theta)\le \max\{g(\pi/6), g(\theta_0)\}$ for all $\theta\in [\pi/6, \theta_0]$.
Using $r\ge (\sqrt{3}-1)R$, $\sin(\theta_0)=\fr{r}{R}$, and $\pi/4<\theta_0<\pi/2$ we have
\beas&& g(\pi/6)=1-\fr{r}{R}\big(\fr{\sqrt{3}}{2}+\fr{1}{2}\big)
=1-\fr{r}{R}\fr{1}{\sqrt{3}-1}\le 0,\\
&&
g(\theta_0)
=\cos(\theta_0)\big(\cos(\theta_0)-\sin(\theta_0)\big)<0.\eeas
This proves that
$g(\theta)\le 0$ for $\theta\in [\pi/6,\theta_0]$ and therefore $g(\theta_2)\le 0$ hence $f(\pi)\le 0$
because $\theta_2\in[\pi/6,\theta_0)$. By intermediate value theorem,
there exists $\theta_3\in (\pi/2, \pi]$ such that $f(\theta_3)=0.$
Thus the point
$$a_3:=\rho(\theta_3)u(\theta_3)\in \p K\,\,\,{\rm satisfies}\,\,\, (\ref{6.11}).$$
Next let $a_4$ be defined in (\ref{6.11*}) i.e.
$a_4=a_1+a_3-a_2$. Using (\ref{6.11}) we have
\be
|a_2|^2+|a_4|^2=|a_1|^2+|a_3|^2,\quad
|a_2-a_4|^2=|a_1-a_3|^2.\label{6.12}\ee
From the first equality in (\ref{6.12}) we have
$|a_4|^2=|a_1|^2+|a_3|^2-|a_2|^2
=r^2+\rho(\theta_3)^2-R^2\le r^2$ and so $a_4\in B_r\subset K$.
We then come to prove that
\be a_4\in K^{\circ}.\label{6.13}\ee
Suppose to the contrary that $a_4\not\in K^{\circ}$. Then
$a_4\in(\p B_r)\cap \p K$, $|a_3|=\rho(\theta_3)=R$, and thus
$a_3=Ru(\theta_3)\in\p K$ (with $\pi/2<\theta_3\le \pi$).
Write $a_4=ru(\theta_4)$ for some $\theta_4\in[0,2\pi)$. From
$a_4-a_3=a_1-a_2$ we have $|a_4-a_3|^2=|a_1-a_2|^2$.
Since $
|a_4-a_3|^2
=|ru(\theta_4)-Ru(\theta_3)\big)|^2=r^2+R^2-2rR\cos(\theta_4-\theta_3),\
|a_1-a_2|^2=|-ru(\pi/2)+Ru(\theta_2)|^2
=r^2+R^2-2rR\sin(\theta_2)$, this gives
\be\cos(\theta_4-\theta_3)=\sin(\theta_2)\quad (>0).\label{6.14}\ee
On the other hand from $a_4-a_1=
a_3-a_2$ and (\ref{6.11}) we have
$0= \la a_3-a_2,a_1-a_2\ra=\la a_4-a_1
,a_1-a_2\ra= rR\cos(\theta_4)\cos(\theta_2)-r\big(1+
\sin(\theta_4)\big)\big(r-R\sin(\theta_2)\big)$
which gives
$$rR\cos(\theta_4)\cos(\theta_2)
=r(1+\sin(\theta_4))\big(r-R\sin(\theta_2)\big)\ge 0$$
where we have used $R\sin(\theta_2)<R\sin(\theta_0)=r.$
Since $\pi/6\le \theta_2<\pi/2$ so that $rR\cos(\theta_2)>0$, this implies that
$\cos(\theta_4)\ge 0$ and thus
$\theta_4\in [0,\pi/2]\cup[3\pi/2, 2\pi).$
Next from $a_4\in(\p B_r)\cap \p K$ we have
$|a_4-b_0|\ge |a_0-b_0|.$
Since
$|a_4-b_0|^2=|ru(\theta_4)-Ru(\pi/2)|^2
=r^2+R^2-2rR\sin(\theta_4),
|a_0-b_0|^2=r^2+R^2-2rR\sin(\theta_2),$ this implies that
$\sin(\theta_4)\le \sin(\theta_2).$  If
$\theta_4\in[0,\pi/2]$, then
$0\le \pi/2-\theta_4<\theta_3-\theta_4\le\pi$, and so from (\ref{6.14})
we have $\cos(\theta_3-\theta_4)=\sin(\theta_2)=\cos(\pi/2-\theta_2)$
hence
$\theta_3-\theta_4=\pi/2-\theta_2$ and so $\pi/2\ge \theta_4=\theta_3-\pi/2+\theta_2>\theta_2>0$ hence $\sin(\theta_4)>\sin(\theta_2)$
(here we used the strict inequality $\theta_3>\pi/2$). This
contradicts $\sin(\theta_4)\le \sin(\theta_2)$. If
$\theta_4\in[3\pi/2, 2\pi)$, then
$\pi/2\le \theta_4-\theta_3<3\pi/2$
so that we still get a contradiction:
$0<\sin(\theta_2)=\cos(\theta_3-\theta_4)=\cos(\theta_4-\theta_3)\le 0.$
Thus it must hold $a_4\in  K^{\circ}$.

Note that from $a_1\in \p K, |a_1|= r, a_4\in K^{\circ}, |a_4|\le r$ and $|a_2|=R>r$, we have
$a_4\neq a_1$, $a_2\neq a_1$, $a_2\neq a_4$, hence $a_2-a_3=a_1-a_4\neq 0, |a_1-a_3|=|a_2-a_4|>0$.
Recalling the facts mentioned in {\it Step3}, for the boundary points $a_1, a_3\in \p K$,
there are $\og_1, \og_3\in {\mS}^1$ such that
\be \la x-a_1,\og_1\ra> 0\quad \forall\, x\in K\setminus\{a_1\};\quad
\la x-a_3,\og_3\ra>0\quad \forall\, x\in K\setminus \{a_3\}.\label{6.SP}\ee
By $a_4\in K^{\circ}$ we have
$t^*:=\max\{t\in [1,\infty)\,|\, a_2+t(a_4-a_2)\in K\}>1$.
Let
$$a_4^*=a_2+t^*(a_4-a_2),\quad \og=\fr{a_1-a_3}{|a_1-a_3|}.$$
Then $a_4^*\in\p K, \og\in {\mS}^1$, and so by the condition (\ref{4.1}) we have
\be {\rm either}\quad u:=\fr{a_2+a_4^*}{2}+\fr{|a_2-a_4^*|}{2}\og\in K
\quad{\rm or}\quad
v:=\fr{a_2+a_4^*}{2}-\fr{|a_2-a_4^*|}{2}\og\in K.\label{6.UV}\ee
On the other hand using $|a_1-a_3|=|a_2-a_4|$ we compute
\beas&&\fr{a_2+a_4^*}{2}=a_2+\fr{t^*}{2}(a_4-a_2),\quad \fr{|a_2-a_4^*|}{2}\og=\fr{t^*}{2}(a_1-a_3)\\
&&\Longrightarrow\quad
u-a_1
=(1-t^*)(a_2-a_1),\quad v-a_3=(1-t^*)(a_2-a_3).\eeas
Since $a_2\in K\setminus\{a_1, a_3\}$, using (\ref{6.SP}) and $t^*>1$ we deduce
$$\la u-a_1,\og_1\ra=(1-t^*)\la a_2-a_1,\og_1\ra<0,\quad
\la v-a_3,\og_3\ra=(1-t^*)\la a_2-a_3,\og_3\ra<0 $$
and so using (\ref{6.SP}) again we conclude that $u\not\in K$ and $v\not\in K$. But this
contradicts (\ref{6.UV}). This contradiction proves that $r=R$, i.e. $K$ is a disk.

{\it Step5}. We now make induction argument on the dimension $n\ge 2$. In {\it Step4}
we have proved that the theorem holds for $n=2$. Suppose the theorem holds
for a dimension $n-1\ge 2$. Let $K\subset {\bRn}$ be a compact set having at least two elements and satisfying (\ref{4.1}). In the previous steps we have proved that
$K$ is a convex body of constant width. With the notations and properties shown in {\it Step3} we may assume that the common center of the insphere and circumsphere of $K$ is the origin
$0$ and we denote by $B_r^n=B_r^n(0),
B_R^n=B_R^n(0)$ the insphere and circumsphere of $K$. Then
$0<r\le R$, $B_r^n\subset K\subset B_R^n$, and $r+R={\rm diam}(K)$.

We now prove that $r=R$.
Let $a_0\in (\p B_r^{n})\cap \p K, b_0\in  (\p B_R^{n})\cap \p K$
satisfy (\ref{6.6}).
By (\ref{6.7*}) we have
$-\fr{r}{R}b_0\in (\p B_r^{n})\cap \p K$. Since $n\ge 3$, there is ${\bf e}_0\in {\bSn}$ such that
${\bf e}_0\,\bot\,\{a_0, b_0\}$. Let
$\Pi=\{v\in {\mR}^{n}\,\,|\,\, \la v, {\bf e}_0\ra=0\}$ and let
$\{{\bf e}_1, {\bf e}_2, ...,{\bf e}_{n-1}\}\subset \Pi$ be an orthnormal base of
$\Pi$. Then $\{{\bf e}_0, {\bf e}_1, {\bf e}_2, ...,{\bf e}_{n-1}\}$ is an orthnormal base of
${\bRn}$. It is easily seen that the linear mapping ${\cal A}: {\mR}^{n-1}\to\Pi $ defined by
${\cal A}(x)=\sum_{i=1}^{n-1}x_i{\bf e}_i,\, x=(x_1, x_2, ...,x_{n-1})\in {\mR}^{n-1}$,
is an isometry:
$\la {\cal A}(x),{\cal A}(y)
\ra=\la x,y\ra,\, |{\cal A}(x)|=|x|,\, x, y\in {\mR}^{n-1}$,
and ${\mR}^{n-1}={\cal A}^{-1}(\Pi).$ Let $K_1={\cal A}^{-1}(K\cap \Pi).$ Then $K_1\subset {\mR}^{n-1}$ is a compact
infinite set. We now prove that $K_1$ satisfies the condition (\ref{4.1}) for the
$n-1$-dimensional case. To do this we need to prove that
\be {\cal A}^{-1}\big((\p K)\cap \Pi\big)= \p K_1.\label{6.16}\ee
Take any $x_0={\cal A}^{-1}(v_0)\in{\cal A}^{-1}((\p K)\cap \Pi)$
with $v_0\in (\p K)\cap \Pi.$ Since $v_0\in \p K$, by {\it Step3}
there is an ${\bf e}\in {\mS}^{n-1}$ such that
\be\la v-v_0, {\bf e}\ra>0\qquad \forall\, v\in K\setminus\{v_0\}.
\label{6.17}\ee
Write
${\bf e}=\sum_{i=0}^{n-1}t_i{\bf e}_i=:t_0{\bf e}_0+\wt{{\bf e}}$ with
$\wt{{\bf e}}=\sum_{i=1}^{n-1}t_i{\bf e}_i\in \Pi.$
It is easily seen that $\wt{{\bf e}}\neq 0.$ In fact from $B^n_r(0)\subset K$,
(\ref{6.17}), and $v_0 \bot {\bf e}_0$ we have $-\dt{\bf e}\in K\setminus\{v_0\}$ for some
$0<\dt\le r$ and so
$0>-\dt=\la -\dt{\bf e},{\bf e}\ra> \la v_0, {\bf e}\ra=
\la v_0, \wt{{\bf e}}\ra$. Therefore $\wt{{\bf e}}\neq 0$.
From $\wt{{\bf e}}/|\wt{{\bf e}}|\in\Pi$ we have
${\bf n}:={\cal A}^{-1}(\wt{{\bf e}}/|\wt{{\bf e}}|)\in
{\mS}^{n-2}$. For any $x\in K_1\setminus\{x_0\}$,
let $v={\cal A}(x)$ and note that $v_0={\cal A}(x_0)$.
We have $v\in (K\cap \Pi)\setminus\{v_0\}$ and it follows from (\ref{6.17})
and $v,v_0\in \Pi$ that
$$\la x-x_0, {\bf n}\ra
=\la {\cal A}^{-1}(v-v_0), {\cal A}^{-1}((\wt{{\bf e}}/|\wt{{\bf e}}|)
\ra
=\la v-v_0, \wt{{\bf e}}/|\wt{{\bf e}}|
\ra=\fr{1}{|\wt{{\bf e}}|}\la v-v_0, {\bf e}\ra>0.$$
Since $x_0\in K_1$ and the inequality $\la x-x_0, {\bf n}\ra>0$ holds for all $x\in K_1\setminus \{x_0\}$, it follows that $x_0\in \p K_1$. This proves that ${\cal A}^{-1}\big((\p K)\cap \Pi\big)\subset\p K_1$.
Next for any $x_0\in \p K_1$, since ${\mR}^{n-1}={\cal A}^{-1}(\Pi)$,
letting $v_0={\cal A}(x_0)$, then $v_0\in \Pi$ and $x_0={\cal A}^{-1}(v_0)$.
By definition of boundary point, for any $\vep>0$
we have $(B_{\vep}^{n-1}(x_0))^{\circ}\cap  K_1\neq\emptyset,
(B_{\vep}^{n-1}(x_0))^{\circ}\cap  K_1^c\neq\emptyset$. Let
$x_1\in (B_{\vep}^{n-1}(x_0))^{\circ}\cap  K_1, x_2\in (B_{\vep}^{n-1}(x_0))^{\circ}\cap  K_1^c$.
Then, since $K_1={\cal A}^{-1}\big(K\cap \Pi\big)$ and $K_1^c=
{\cal A}^{-1}\big(K^c\cap \Pi\big)$, we can write
$x_1={\cal A}^{-1}(v_1), x_2={\cal A}^{-1}(v_2)$ for some $v_1\in K\cap \Pi, v_2\in K^c\cap \Pi$.
Then from
$|v_1-v_0|={\cal A}(x_1-x_0)|=|x_1-x_0|<\vep,
|v_2-v_0|={\cal A}(x_2-x_0)|=|x_2-x_0|<\vep$ we see that
$(B_{\vep}^n(v_0))^{\circ}\cap K\neq \emptyset,
(B_{\vep}^n(v_0))^{\circ}\cap K^c\neq \emptyset$
and so $v_0\in \p K$ and thus $v_0\in (\p K)\cap \Pi$.
Thus $x_0={\cal A}^{-1}(v_0)\in {\cal A}^{-1}((\p K)\cap \Pi)$.
This proves that $\p K_1\subset {\cal A}^{-1}((\p K)\cap \Pi)$ and thus (\ref{6.16}) holds true.

To see that $K_1$ satisfies the condition (\ref{4.1}) for the
$n-1$-dimensional case, we take any  $x, y\in \p K_1={\cal A}^{-1}\big((\p K)\cap \Pi\big), $
and any $\sg\in {\mS}^{n-2}$. Let $v={\cal A}(x), v_*
={\cal A}(y),
\og={\cal A}(\sg)$. Then $v, v_*\in (\p K)\cap \Pi$ and from
$|\og|=|{\cal A}(\sg)|=|\sg|=1$ we have $\og\in {\mS}^{n-1}\cap \Pi$.
Since $K$ satisfies the condition (\ref{4.1}) for dimension $n$, this implies that
either $\fr{1}{2}(v+v_*)+\fr{1}{2}|v-v_*|
\og\in K$ or $\fr{1}{2}(v+v_*)-\fr{1}{2}|v-v_*|
\og\in K$. Since all
$v, v_*,
\og$ belong to $\Pi$, this also implies that either
$\fr{1}{2}(v+v_*)+\fr{1}{2}|v-v_*|\og\in K\cap \Pi,$ or
$\fr{1}{2}(v+v_*)-\fr{1}{2}|v-v_*|
\og\in K\cap \Pi$. Thus using
$|x-y|=|{\cal A}^{-1}(v-v_*)|=|v-v_*|$
we deduce
$$\fr{x+y}{2}\pm \fr{|x-y|}{2}
\sg
={\cal A}^{-1}\Big(\fr{v+v_*}{2}\pm\fr{|v-v_*|}{2}
\og\Big)$$
and so
either $ \fr{1}{2}(x+y)+\fr{1}{2}|x-y|
\sg\in {\cal A}^{-1}(K\cap \Pi)=K_1$, or $\fr{1}{2}(x+y)-
\fr{1}{2}|x-y|
\sg\in {\cal A}^{-1}(K\cap \Pi)=K_1$. This proves that
 $K_1$ satisfies the condition (\ref{4.1}) for the $n-1$-dimensional case.

By induction hypotheses, $K_1\subset {\mR}^{n-1}$ is a
ball. Write $K_1=B^{n-1}_{r_0}(c_0)$.
From ${\bf e}_0\,\bot\,\{a_0, b_0\}$ we have
$a_0, b_0, -\fr{r}{R}b_0\in (\p K)\cap \Pi$ and thus from (\ref{6.16})
we see that
$$x_0:={\cal A}^{-1}(a_0),\,\, p_0:={\cal A}^{-1}(b_0),\,\,q_0:={\cal A}^{-1}(-\fr{r}{R}b_0)
\,\,\, {\rm all\,\,beong\,\, to}\,\,\p K_1.$$  Since ${\cal A}$ is an isometry,
we have  ${\rm diam}(K_1)
={\rm diam}(K\cap\Pi)
\le {\rm diam}(K).$
On the other hand we have
${\rm diam}(K_1)\ge |p_0-q_0|
=(1+\fr{r}{R})|{\cal A}^{-1}(b_0)|=R+r=
{\rm diam}(K).$
Thus  ${\rm diam}(K_1)=|p_0-q_0|=R+r$ and so
$$K_1=B^{n-1}_{r_0}(c_0)\quad
{\rm with}\quad r_0=\fr{R+r}{2},\quad c_0=\fr{p_0+q_0}{2}.$$
By $q_0=-\fr{r}{R}p_0$ we have
$c_0
=\fr{1}{2}(1-\fr{r}{R})p_0=\fr{1}{2}(1-\fr{r}{R}){\cal A}^{-1}(b_0).$
Since $x_0={\cal A}^{-1}(a_0)\in \p K_1=\p B_{r_0}^{n-1}(c_0)$
and since $\la a_0,b_0\ra\ge 0$,  it follows that
$(\fr{R+r}{2})^2=r_0^2=|x_0-c_0|^2
=|a_0-\fr{1}{2}(1-\fr{r}{R})b_0|^2
\le |a_0|^2+|\fr{1}{2}(1-\fr{r}{R})b_0|^2=r^2+(\fr{R-r}{2})^2$ and so
$Rr=(\fr{R+r}{2})^2-(\fr{R-r}{2})^2\le r^2.$
Since $0<r\le R$, this implies $r=R$ and so
$K$ is an $n$-dimensional ball. This completes the proof of Theorem
\ref{theorem4.1}.\qquad $\Box$
\vskip1mm
{\bf Remark}. Theorem \ref{theorem4.1}
for the case $n=2$ may be also proven by using
the Mizel's Conjecture (which has been already proven, see e.g. Theorem 4.2.1 in \cite{MMO2019}), that is,
by proving that {\it every rectangle with three vertices on the
boundary $\p K$ also has its fourth vertex on $\p K$.}  The proof
for ``the fourth vertex $\not\in K^{\circ}$ " is the same as in the
above proof in {\it Step4} (using contradiction argument),
but the  proof for ``the fourth vertex $\in K$" is difficult.
\vskip2mm

{\bf Proof of Theorem} \ref{theorem4.2}. Let $\wt{E}\subset E$ be a Borel subset such that $Z:=E\setminus\wt{E}$ is a null set. Then using ${\bf 1}_{\wt{E}}(v)={\bf 1}_{E}(v)-{\bf 1}_{Z}(v)$ we have
$${\bf 1}_{\wt{E}}(v){\bf 1}_{\wt{E}}(v_*)(1-{\bf 1}_{\wt{E}}(v'))(1-
{\bf 1}_{\wt{E}}(v_*'))
\le {\bf 1}_{E}(v){\bf 1}_{E}(v_*)(1-{\bf 1}_{E}(v'))
(1-{\bf 1}_{E}(v_*'))+2({\bf 1}_{Z}(v')
+ {\bf 1}_{Z}(v_*')\big).$$
Since $mes(Z)=0$, this implies
$$\inttt_{{\bRRSn}}\big({\bf 1}_{Z}(v')+ {\bf 1}_{Z}(v_*')\big){\rm d}v{\rm d}v_* {\rm d}\sg \\
=2|{\mS}^{n-1}|\intt_{{\bRRn}}{\bf 1}_{Z}(v) {\rm d}v {\rm d}v_*=0$$
and so $\wt{E}$ also satisfies the condition (\ref{4.2}).
Thus after a modification on a null set we can assume that $E$ is a Borel set. Now the condition (\ref{4.2}) implies that
for almost every $(v,v_*)\in E\times E$ we have
$(1-{\bf 1}_{E}(v'))(1-{\bf 1}_{E}(v_*'))=0$ a.e. $\sg\in {\bSn}$, which implies that
$$|{\mS}^{n-1}_{v,v_*}(E)|=|{\bSn}|\qquad {\rm a.e.}\quad (v,v_*)\in E\times E.$$
By Lemma \ref{lemma5.5} (with $\ld =1$), there is a null set $Z_0\subset E$
such that $K=\overline{E\setminus Z_0}$ is compact and the equality
$|{\mS}^{n-1}_{v,v_*}(K)|=|{\bSn}|$ holds for all $v,v_*\in K$.
Since $K$ is compact implies that ${\mS}^{n-1}_{v,v_*}(K)$ is compact,
it follows that ${\mS}^{n-1}_{v,v_*}(K)={\bSn}$ for all $v,v_*\in K$.
This shows that $K$ satisfies the condition (\ref{4.1}) and thus
by Theorem \ref{theorem4.1}, $K=B_R(v_0)$ is an $n$-ball.
This also implies that $mes(\p K)=0$ and thus by Lemma \ref{lemma5.5} we conclude
 $mes\big((K\setminus E)\cup(E\setminus K)\big)=0$.
$\quad \Box$
\\


\begin{thebibliography}{99}

\bibitem{A} Arkeryd, L.: On the Boltzmann equation,
$Arch.\ Rat. \ Mech. \ Anal.$ {\bf 45}: 1-34 (1972).

\bibitem{weak-coupling} Benedetto, D.; Pulvirenti, M.; Castella, F.; Esposito, R.: On
the weak-coupling limit for bosons and fermions. Math. Models Methods
Appl. Sci. {\bf 15}: 1811-1843  (2005).

\bibitem{CIP} Cercignani, C.; Illner, R.;  Pulvirenti, M.: {\it
The  Mathematical
Theory  of  Dilute  Gases } (Springer, New York, 1994).


\bibitem{CG}  Chakerian, G. D.;  Groemer, H.: Convex bodies of constant width. Convexity and its applications, 49-96, Birkh\"{a}user, Basel, 1983.

\bibitem{CC} Chapman, S.; Cowling, T.G.: {\it The  Mathematical Theory  of
Non-Uniform  Gases } (Third Edition, Cambridge University Press, 1970).

\bibitem{ESY} Erd\"{o}s, L.; Salmhofer, M.; Yau, H.-T.: On the
quantum Boltzmann equation. J. Stat. Phys. {\bf 116}: 367-380  (2004).


\bibitem{Lu2001} Lu, X. : On spatially homogeneous solutions of a modified Boltzmann equation for Fermi-Dirac particles. J. Statist. Phys.{\bf 105} , no. 1-2, 353-388  (2001).


\bibitem{LS} Lukkarinen, J.; Spohn, H.:
Not to normal order--notes on the kinetic limit for weakly interacting quantum fluids.
 J. Stat. Phys. {\bf 134}: 1133-1172  (2009).



\bibitem{MMO2019} Martini, Horst; Montejano, Luis; Oliveros, D\'{e}borah: Bodies of constant width. An introduction to convex geometry with applications. Birkh\"{a}user/Springer, Cham, 2019.


\bibitem{M}  Montejano, Luis: A characterization of the Euclidean ball in terms of concurrent sections of constant width. Geom. Dedicata {\bf 37}, no. 3, 307-316  (1991).


\bibitem {Nordheim} Nordheim, L.W.: On the kinetic methods in the new statistics
and its applications in the electron theory of conductivity.
Proc. Roy. Soc. London Ser. A  {\bf 119}, 689-698  (1928).

\bibitem{TM} Truesdell, C.; Muncaster, R.G.: {\it  Fundamentals
Maxwell's  Kinetic
Theory  of  a  Simple  Monoatomic  Gas}
(Academic Press, New York, 1980).

\bibitem {Uehling and Uhlenbeck}Uehling, E.A.; Uhlenbeck, G.E.:
Transport phenomena in Einstein-Bose and Fermi-Dirac gases, I, Phys. Rev.
{\bf 43}: 552-561  (1933).



\end{thebibliography}
\end{document}